\newtheorem{teo}{Theorem}
\newtheorem{lemma}{Lemma}
\theoremstyle{definition}
\newtheorem{defi}{Definition}
\theoremstyle{remark}
\newtheorem{rem}{Remark}
\newcommand{\adj}{\operatorname{Adj}}
\title[Matrix orthogonality on the real line]{Vector interpretation of the matrix orthogonality on the real line}
\author[A. Branquinho]{\sc A. Branquinho}
\address[A. Branquinho]{CMUC and Departamento de Matem\'atica, Universidade de Coimbra,
Apartado 3008, EC Universidade, 3001-454 Coimbra, Portugal.}
\email[A. Branquinho]{ajplb@mat.uc.pt}
\author[F. Marcell\'{a}n]{\sc F. Marcell\'{a}n}
\address[F. Marcell\'{a}n]{Departamento de Matem\'aticas,
Escuela Polit\'ecnica Superior, Universidad Carlos III de Madrid, Avenida de
la Universidad, 30, 28911 Legan\'es, Spain.}
\email[F. Marcell\'{a}n]{pacomarc@ing.uc3m.es}
 \author[A. Mendes]{\sc A. Mendes}
 \address[A. Mendes]{Departamento de Matem\'atica, Escuela de Tecnolog\'ia y Gesti\'on,
 Instituto Polit\'ecnico de Leiria,  2411 - 901  Leiria - Portugal.}
 \email[A. Mendes]{aimendes@estg.ipleiria.pt}
\thanks{{\hspace{-.45 cm}}{\it 2000 Mathemathics Subject Classification}. 33C45, 39B42. \\
{\it Key words and phrases}. Matrix orthogonal polynomials, problems of Hermite-Pad\'{e}, linear functional, recurrence relation, tridiagonal operator, Favard theorem, asymptotic results, Nevai class. }
\begin{document}

\begin{abstract}
In this paper we study sequences of vector orthogonal po\-ly\-no\-mials. The vector orthogonality presented here provides a
reinterpretation of what is known in the literature as matrix orthogonality. These systems of orthogonal polynomials satisfy three-term recurrence relations with matrix coefficients that do not obey to any type of symmetry.  In this sense the vectorial reinterpretation allows us to study a non-symmetric case of the matrix orthogonality. We also prove that our systems of polynomials are indeed orthonormal with respect to a complex measure of orthogonality. Approximation problems of Hermite-Pad\'{e} type are also discussed. Finally, a Markov's type theorem is presented.
\end{abstract}

 \maketitle

\section{Introduction}

$\phantom{ola}$In the late eighties of the last century, the following problem attracted the interest of many researchers.

$\phantom{ola}$\emph{When a sequence of monic polynomials, $\{ p_n \}_{n \in \mathbb{N}}$, satisfying a recurrence~relation
 \begin{eqnarray}
\label{rel2n+1} x^N p_n(x) = c_{n,0} p_n(x) + \sum_{k=1}^N \left[
\overline{c}_{n,k}p_{n-k}(x)+ c_{n+k,k} p_{n+k} (x) \right],
 \end{eqnarray}
  where  $c_{n,0}\, (n=0,1,\ldots)$ is a real sequence and $c_{n,k}$, $(n=1,2,\ldots)$ are sequences of complex numbers for $k=1,2, \ldots ,N$ with
$c_{n,N}\neq 0,$ is related with some kind of orthogonality?}

$\phantom{ola}$Several authors (A. J. Dur\'{a}n, F. Marcell\'{a}n, W. Van Assche, and S. M. Zagorodnyuk, among others) were interested on this subject. Their contributions revealed an enormous interdisciplinarity between different kinds of orthogonality (like Sobolev orthogonality, orthogonality on rays of the complex plane) and several applications, mainly quadrature formulas. From the extensive bibliography on the subject we stand out the references~\cite{Cberg,Dur93,Dur95,DurWal95,EvansETal,MarcellanZagorodnyuk,nevai,sinapassche,Zagorodnyuk2,Zagorodnyuk3,Zagorodnyuk4,Zagorodnyuk5}.

$\phantom{ola}$In his work~\cite{Dur93}, A. J. Dur\'{a}n  presents for the first time a Favard's theorem
for sequences of polynomials $\{p_n\}_{n \in {\mathbb{N}}}$ satisfying recurrence relations like~\eqref{rel2n+1}. Few years later this result was reformulated by the author together with W. Van Assche in~\cite{DurWal95} where they stated the connection between sequences of matrix orthogonal polynomials and sequences of polynomials that satisfy a higher order recurrence relation. As an application,  the authors gave an interpretation of a Sobolev discrete inner product.

$\phantom{ola} $In this context the authors considered a positive integer number $N$ and the operators $R_{N,m}$, $m=0,1,\ldots,N-1$, defined on the linear space of polynomials,~${\mathbb{P}}$,~by
$$R_{N,m}(p)(x)=\sum_{n=0}^\infty \frac{p^{(nN+m)}(0)}{(nN+m)!}x^n, $$ i.e, the operator $R_{N,m}$ takes from $p$ just those powers with remainder $m$ (modulus~$N$) and then removes $x^m$ and changes $x^N$ to $x$ proving then, the following result.
 \begin{teo}[\cite{Dur93}]\label{favardrr2n+1}
 Suppose that $\{p_n\}_{n \in {\mathbb{N}}}$, with $\deg p_n = n$, is a sequence of polynomials satisfying a $(2N+1)-$term recurrence relation as
\eqref{rel2n+1} and let $\{P_n\}_{n \in {\mathbb{N}}}$ be a matrix polynomial sequence defined by
\begin{eqnarray*}
 P_n(x) = \left[%
\begin{matrix}
  R_{N,0}(p_{nN})(x) 
  &\cdots &  R_{N,N-1}(p_{nN})(x) \\
  \vdots 
  & \ddots & \vdots \\
  R_{N,0}(p_{(n+1)N-1})(x) 
  & \cdots & R_{N,N-1}(p_{(n+1)N-1})(x)
\end{matrix}%
\right] .
 \end{eqnarray*}
 Then, this sequence is orthonormal on the real line with respect to a positive definite matrix of measures and satisfies a three-term recurrence relation with matrix coefficients. \\
 Conversely, suppose that $\{P_n\}_{n \in {\mathbb{N}}},$ with $P_n=(P_{n}^{m,j})_{m,j=0}^{N-1},$ is a sequence of orthonormal matrix polynomials or, equivalently,  they satisfy a symmetric three-term recurrence relation with matrix coefficients. Then the scalar polynomials defined by
 \begin{eqnarray*}
\label{mix} p_{nN+m}(x)=
\sum_{j=0}^{N-1} x^j P_{n}^{m,j}(x^N) \quad (n \in {\mathbb{N}}, \,
0 \leq m \leq N-1)
 \end{eqnarray*}
 satisfy a $(2N+1)-$term recurrence relation of the form~\eqref{rel2n+1}.
 \end{teo}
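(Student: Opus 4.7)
The strategy is to prove that the scalar $(2N+1)$-term recurrence \eqref{rel2n+1} is equivalent to a symmetric matrix three-term recurrence for the associated sequence $\{P_n\}$, and then to invoke the classical matrix Favard theorem.

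\emph{Direct implication.} Starting from \eqref{rel2n+1}, I would first record the tautology
\[p_{nN+m}(x)=\sum_{j=0}^{N-1}x^j R_{N,j}(p_{nN+m})(x^N),\]
which is immediate from the definition of $R_{N,j}$ and exhibits the $m$th row of $P_n(x^N)$ as the vector of pieces of $p_{nN+m}$ in each residue class modulo $N$. Applying each $R_{N,j}$ to both sides of \eqref{rel2n+1} and regrouping $N$ consecutive indices into a single block, the scalar identity becomes a matrix identity
\[yP_n(y)=A_{n+1}P_{n+1}(y)+B_n P_n(y)+A_{n+1}^*P_{n-1}(y),\]
whose entries are determined by the coefficients $c_{nN+i,k}$. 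The conjugation pattern $(c_{n,k},\overline{c}_{n,k})$ in \eqref{rel2n+1} together with the reality of $c_{n,0}$ forces $B_n=B_n^*$ and makes the coefficient of $P_{n-1}$ the adjoint of the coefficient of $P_{n+1}$, so the matrix recurrence is symmetric; the hypothesis $c_{n,N}\neq 0$ guarantees that $A_{n+1}$ is non-singular. The matrix Favard theorem then yields a positive-definite matrix of measures for which $\{P_n\}$ is orthonormal.

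\emph{Converse implication.} Given a symmetric matrix three-term recurrence for $\{P_n\}$, I would substitute it into the definition $p_{nN+m}(x)=\sum_{j=0}^{N-1}x^j P_n^{m,j}(x^N)$ and compute $x^N p_{nN+m}(x)$. Replacing each $x^N P_n^{m,j}(x^N)$ by the right-hand side of the matrix recurrence produces a linear combination of the $3N$ entries $\{P_{n-1}^{m,j},P_n^{m,j},P_{n+1}^{m,j}\}_{j=0}^{N-1}$; restoring the factor $x^j$ and re-indexing then identifies them with the $2N+1$ scalar neighbours $p_{nN+m-N},\dots,p_{nN+m+N}$ appearing in \eqref{rel2n+1}. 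The Hermitian character of $B_n$ delivers $c_{n,0}\in\mathbb{R}$, while the pairing of $A_{n+1}$ with its adjoint reproduces the conjugate structure of the remaining scalar coefficients.

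The principal obstacle is the explicit bookkeeping in the direct implication: one must determine exactly which scalar coefficient $c_{nN+i,k}$ contributes to which entry $A_{n+1}^{m,j}$, $B_n^{m,j}$ or $(A_{n+1}^*)^{m,j}$ (according to whether $i-k$ is negative, belongs to $\{0,\dots,N-1\}$, or is at least $N$) and then verify block by block that the Hermitian symmetry of the matrix recurrence is forced by the conjugation pattern in \eqref{rel2n+1}. A secondary technical point is to translate $c_{n,N}\neq 0$ into invertibility of the leading matrix coefficient, which is the precise hypothesis required to invoke the matrix Favard theorem and obtain genuine positive definiteness of the resulting matrix of measures on the real line.
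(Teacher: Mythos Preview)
The paper does not prove this theorem at all: it is quoted verbatim from Dur\'an~\cite{Dur93} as background (note the citation in the theorem header), and the paper's own contribution begins afterwards with the more general recurrence~\eqref{rr2n+1cap1} involving an arbitrary polynomial $h$ of degree $N$ in place of $x^N$. So there is no ``paper's own proof'' to compare your proposal against.

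That said, your sketch is the correct strategy and matches what Dur\'an does in~\cite{Dur93}, and it is also essentially the mechanism the present paper uses for its generalization in Theorem~\ref{teorema2}: rewrite $N$ consecutive scalar equations as a single block equation, read off the matrix coefficients $A_m,B_m,C_m$, and then appeal to a Favard-type result at the matrix level. The paper carries this out explicitly in the discussion between~\eqref{rr2n+1cap1} and~\eqref{rrbv}, writing the system in block form and identifying the triangular structure of $A_m$ and $C_m$. Your remark that the conjugation pattern in~\eqref{rel2n+1} forces $B_n=B_n^*$ and $C_m=A_{m-1}^*$ is exactly the point that distinguishes Dur\'an's symmetric setting from the non-symmetric one treated here; the present paper drops that hypothesis, which is why it must develop bi-orthogonality (Theorems~\ref{osvm} and~\ref{teobiort}) rather than invoke the positive-definite matrix Favard theorem directly.

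One small caution on your bookkeeping: the condition $c_{n,N}\neq 0$ makes $A_m$ lower triangular with nonzero diagonal (hence invertible), but you should state this explicitly rather than leave it as a ``secondary technical point,'' since without it the matrix Favard theorem does not apply.
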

$\phantom{ola}$Taking into account the current relevance of the subject our work is concerned with the analysis of higher order recurrence relations, in this case, of order $2N+1$
 \begin{eqnarray}
\label{rr2n+1cap1} h(x)p_n(x) =
c^{n+N-1}_{n+N} p_{n+N}(x) + \sum_{k=0}^{2N-1}
c^{n+N-1}_{n+N-1-k}p_{n+N-1-k}(x)
 \end{eqnarray}
 where $h$ is a polynomial of fixed degree $N$ and where $c^{n+N-1}_{j}$, $ n\geq 0,$ are real sequences for  $j=n-N,\ldots,n+N-1$ with $c^{n+N-1}_{n-N}\neq
0$ and  initial conditions on~$p_i$ for $i=0,\ldots, N-1$ are given.

$\phantom{ola}$We begin by pointing out that in the structure of the recurrence relation~\eqref{rr2n+1cap1} the polynomial $h$ is a generic polynomial with fixed degree $N$ and their coefficients do not satisfy any kind of symmetry.

$\phantom{ola}$Our aim is to analyze this more general case by studying the sequences of polynomials satisfying such a kind of recurrence relations in order to find out what type of orthogonality is associated with them. On the other hand, as an application, we expect to obtain some known results.

$\phantom{ola}$Let us consider the family of vector polynomials
$\displaystyle \mathbb{P}^{N}= \{ \left[
p_{1} \, \cdots \,
p_{N}
\right] ^{T} : p_{j} \in \mathbb{P} \} \, $,
and $ \mathcal{M}_{N \times N}({\mathbb{R}})$ the
set of $ N \times N$ matrices with real entries.
Given a polynomial  $h,$ with  $\deg h = N,$ we can split the linear
space of polynomials, ${\mathbb{P}}$, using the basis
 \begin{eqnarray}\label{base}
\{1,x,\ldots,x^{N-1}, h(x), x h(x), \ldots, x^{N-1}h(x),h^2(x), xh^2(x), \ldots\}.
\end{eqnarray}
Then, let $ \{ \mathcal{P}_j\}_{j\in {\mathbb{N}}}$ be a sequence of vector polynomials such that
$\displaystyle \mathcal{P}_{j}(x)= (h(x))^j$ $\displaystyle \mathcal{P}_{0}(x) \, $,
where $\mathcal{P}_{0}(x)=\left[
 1 \, x\, \cdots \,
x^{N-1} \right] ^{T} \, , \ j \in \mathbb{N}.$
Let $ \{p_m\}_{m \in {\mathbb{N}}}$ be a sequence of polynomials, $ \deg
p_{m}=m$, $m\in \mathbb{N}$. We define the \textit{associated vector polynomial
sequence}
$ \{
\mathcal{B}_{m} \}_{m \in {\mathbb{N}}} $ by
 \begin{equation*}
\mathcal{B}_{m}= \left[
 p_{mN} \,  \cdots \,
p_{(  m+1 )N-1}
\right] ^{T},\,\ n\in \mathbb{N} \, .
\end{equation*}
$\phantom{ola}$A scalar polynomial $p_{mN+k}$ of degree $mN+k$, with $0\leq k \leq N-1,$ can be expanded in the basis~\eqref{base} as follows
$$p_{mN+k}(x)=\sum_{i=0}^m \sum_{j=0}^{N-1} a_{i,j} x^j h^i(x).$$
If we consider the operator $R_{h,N,j}$ that takes from  $p_{mN+k}$ the terms of the form $a_{i,j}x^j h^i(x)$ and then removes the common factor $x^j$ and change $h(x)$ to $x$, we get
$$p_{mN+k}(x)=\sum_{j=0}^{N-1}x^j R_{h,N,j}(p_{mN+k}) (h(x)).$$
$\phantom{ola}$It is easy to see that we can write $\mathcal{B}_{m}$ in the matrix form
 \begin{equation}\label{vv}
\mathcal{B}_{m}(x)=V_{m}(h(x))\mathcal{P}_{0}(x),
\end{equation}
where $ V_{m}$ is a~$N\times N$ matrix polynomial of degree $m$ given~by
 \begin{equation*}
V_{m}(h(x))=\left[
 \begin{matrix}
  R_{h,N,0}(p_{nN})(h(x))&\cdots &  R_{h,N,N-1}(p_{nN})(h(x)) \\
  \vdots  & \ddots & \vdots \\
  R_{h,N,0}(p_{(n+1)N-1})(h(x))&  \cdots & R_{h,N,N-1}(p_{(n+1)N-1})(h(x)) \\
 \end{matrix}
 \right]
 \end{equation*}
and ${\mathcal{P}_{0}}(x)=\left[1 \, x\, \cdots \, x^{N-1} \right] ^{T}$.
Equivalently, we can write the elements of the sequence of matrix polynomials $\{V_m\}_{m \in {\mathbb{N}}}$ in
the form
 \begin{eqnarray*}
V_m(h(x))=\sum_{j=0}^m B_j^m (h(x))^j,
 \end{eqnarray*}
where $(B_j^m)$ is a family of  matrices with real entries.

$\phantom{ola}$First we want to prove that if a sequence of scalar polynomials $\{p_n\}_{n \in {\mathbb{N}}}$ satisfies a recurrence relation like~\eqref{rr2n+1cap1} then there exists a sequence of vector polynomials denoted by
$\{{\mathcal{B}}_m\}_{m \in {\mathbb{N}}}$ and a sequence of matrix polynomials $\{V_m\}_{m \in {\mathbb{N}}}$ defined by~\eqref{vv} that satisfies a recurrence relation with matrix coefficients and the converse is also true.

$\phantom{ola}$Notice that we can rewrite~\eqref{rr2n+1cap1} changing $n$ by $n+N-1$,
 \begin{eqnarray}
\label{relrec_seg} h(x)p_{n+N-1}(x) = c^{n+2(N-1)}_{n+2N-1}
p_{n+2N-1}(x) + \sum_{k=0}^{2N-1}
c^{n+2(N-1)}_{n+2(N-1)-k}p_{n+2(N-1)-k}(x)
 \end{eqnarray}
and then, consider the $N$ equations associated with ~(\ref{rr2n+1cap1}) and ~(\ref{relrec_seg}).

$\phantom{ola}$A straightforward calculation yields that the above system of $N$ linear equations can be written in the matrix form
 \begin{multline*} 
h(x) \left[ \begin{matrix}
   p_n (x)\\
      \vdots \\
      p_{n+N-1}(x)
    \end{matrix}\right]= \left[%
\begin{matrix}
  c^{n+N-1}_{n+N} & \ldots& 0\\
  \vdots & \ddots& \vdots \\
  c^{n+2N-2}_{n+N} & \ldots & c^{n+2N-2}_{n+2N-1}
\end{matrix}%
\right] \left[ \begin{matrix}
   p_{n+N}(x) \\
      \vdots \\
      p_{n+2N-1}(x)
    \end{matrix}\right] \\ + \left[%
\begin{matrix}
  c^{n+N-1}_{n}  & \ldots& c^{n+N-1}_{n+N-1}\\
  \vdots & \ddots & \vdots \\
  c^{n+2N-2}_n  & \ldots & c^{n+2N-2}_{n+N-1}
\end{matrix}%
\right] \left[\begin{matrix}
   p_{n}(x) \\
      \vdots \\
      p_{n+N-1}(x)
    \end{matrix}\right] \\ + \left[%
\begin{matrix}
  c^{n+N-1}_{n-N}  & \ldots& c^{n+N-1}_{n-1}\\
  \vdots  & \ddots & \vdots \\
  0  & \ldots & c^{n+2N-2}_{n-1}
\end{matrix}%
\right] \left[ \begin{matrix}
   p_{n-N}(x) \\
      \vdots \\
      p_{n-1}(x)
    \end{matrix}\right]\,.
    \end{multline*}
$\phantom{ola}$Introducing the change of index $n=mN$ in the above relation we get
 \begin{eqnarray}\label{rrbv}
 h(x)
{\mathcal{B}}_m(x) = A_{m} {\mathcal{B}}_{m+1}(x) + B_m
{\mathcal{B}}_m(x) + C_m {\mathcal{B}}_{m-1}(x), \, \, m \geq 1,
 \end{eqnarray}
$\phantom{ola}$Similarly, if we take into consideration Theorem~\ref{favardrr2n+1} given by A. J. Dur\'{a}n, and instead of using the canonical basis for the linear space of polynomials, ${\mathbb{P}},$ we deal with the basis~\eqref{base}, then we get a sequence of polynomials
$\{V_m\}_{m \in {\mathbb{N}}}$ that satisfies the three-term recurrence relation
 \begin{eqnarray*}
V_m(z) = A_{m} V_{m+1}(z) + B_m V_m(z) + C_m V_{m-1}(z), \, \, m \geq 1
 \end{eqnarray*}
with some given initial conditions.

$\phantom{ola}$Notice  that if we multiply this last relation by ${\mathcal{P}}_0$ then we obtain the recurrence relation for the sequence of vector polynomials $\{\mathcal{B}_m\}_{m \in {\mathbb{N}}}$ given by~\eqref{rrbv}.
Finally, from~\eqref{rrbv}  and  taking into account the structure of the recurrence relation as well as the expression of the vector ${\mathcal{B}}_m$, we get the $(2N+1)-$term recurrence relation~\eqref{rr2n+1cap1}.
 \begin{teo}\label{teorema2}
Let $\{p_n\}_{n \in {\mathbb{N}}}$ be a sequence of scalar polynomials,
$\{{\mathcal{B}}_m\}_{m \in {\mathbb{N}}}$ the sequence of vector polynomials with
$${\mathcal{B}}_m(x) = \left[p_{mN}(x) \, p_{mN+1}(x) \, \ldots \, p_{(m+1)N-1}(x)\right]^T,$$ and $\{V_m\}_{m \in {\mathbb{N}}}$ the sequence of matrix polynomials given in ~\eqref{vv}.
Then, the following statements are equivalent:
 \begin{itemize}
\item[(a)] The sequence of scalar polynomials $\{p_n\}_{n \in {\mathbb{N}}}$ satisfies~\eqref{rr2n+1cap1}.
\item[(b)] The sequence of vector polynomials $\{{\mathcal{B}}_m\}_{m \in {\mathbb{N}}}$ satisfies
 \begin{eqnarray*} h(x)
{\mathcal{B}}_m(x) = A_{m} {\mathcal{B}}_{m+1}(x) + B_m
{\mathcal{B}}_m(x) + C_m {\mathcal{B}}_{m-1}(x), \, \, m \geq 1,
\end{eqnarray*}
with initial conditions
 $\displaystyle {\mathcal{B}}_{-1}(x)=0_{N
\times 1}$ and $\displaystyle {\mathcal{B}}_{0}(x)$ 
given.
\item[(c)] The sequence of matrix polynomials $\{V_m\}_{m \in {\mathbb{N}}}$ satisfies
 \begin{eqnarray*} z
V_m(z) = A_{m} V_{m+1}(z) + B_m V_m(z) + C_m V_{m-1}(z), \, \, m \geq 1,
\end{eqnarray*}
with initial conditions
 $\displaystyle V_{-1}(z)=0_{N
\times N} \ \mbox{and } \ V_{0}(z) $ a fixed matrix.
\end{itemize}
The  matrices $A_m$,
$B_m,$ and $C_m$ in the recurrence relations are given, respectively,~by
 \begin{multline*}  
  \left[%
 \begin{matrix}
  c^{(m+1)N-1}_{(m+1)N} & \cdots & 0\\
  \vdots & \ddots & \vdots \\
  c^{(m+2)N-2}_{(m+1)N} &  \cdots & c^{(m+2)N-2}_{(m+2)N-1}
\end{matrix}%
\right] \, , \  
 \left[%
\begin{matrix}
  c^{(m+1)N-1}_{mN} &  \cdots& c^{(m+1)N-1}_{(m+1)N-1}\\
  \vdots  & \ddots & \vdots \\
  c^{(m+2)N-2}_{mN}  & \cdots & c^{(m+2)N-2}_{(m+1)N-1}
\end{matrix}%
\right], \\  \mbox{ and} \quad  
 \left[%
\begin{matrix}
  c^{(m+1)N-1}_{(m-1)N} & \cdots& c^{(m+1)N-1}_{mN-1}\\
  \vdots  & \ddots & \vdots \\
  0 &  \cdots & c^{(m+2)N-2}_{mN-1}
\end{matrix}%
\right].
 \end{multline*}
 \end{teo}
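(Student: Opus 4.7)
The plan is to establish the two equivalences (a) $\Leftrightarrow$ (b) and (b) $\Leftrightarrow$ (c) separately; the three statements then link together. Both equivalences follow from algebraic manipulations that are essentially laid out in the paragraphs preceding the statement, so the main work is to organize them rigorously and justify the less obvious cancellation step.

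For (a) $\Rightarrow$ (b), I would take the $N$ consecutive instances of~\eqref{rr2n+1cap1} with indices $n, n+1, \ldots, n+N-1$ and assemble them as a single block identity. The left-hand sides stack into $h(x)\,[p_n,\dots,p_{n+N-1}]^T$, while the right-hand sides combine into a linear combination of $[p_{n-N},\ldots,p_{n-1}]^T$, $[p_n,\dots,p_{n+N-1}]^T$, and $[p_{n+N},\dots,p_{n+2N-1}]^T$, with coefficient matrices whose entries are precisely the $c^{j}_{k}$ arranged in the upper-triangular, dense, and lower-triangular blocks displayed in the statement. Setting $n=mN$ converts the three stacked vectors into $\mathcal{B}_{m-1}$, $\mathcal{B}_m$, $\mathcal{B}_{m+1}$, which gives (b) with the explicit $A_m,B_m,C_m$. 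The converse (b) $\Rightarrow$ (a) amounts to reading out the $N$ scalar component equations of (b) and, using the specific triangular patterns of zeros in $A_m$ and $C_m$, recognizing each one as~\eqref{rr2n+1cap1} for a different residue modulo $N$.

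For (b) $\Leftrightarrow$ (c), I would use the factorization $\mathcal{B}_m(x)=V_m(h(x))\mathcal{P}_0(x)$ from~\eqref{vv}. The direction (c) $\Rightarrow$ (b) is immediate: evaluate the matrix recurrence at $z=h(x)$ and multiply on the right by $\mathcal{P}_0(x)$. The subtler direction is (b) $\Rightarrow$ (c). Setting $W(z)=zV_m(z)-A_mV_{m+1}(z)-B_mV_m(z)-C_mV_{m-1}(z)$, relation (b) translates to $W(h(x))\mathcal{P}_0(x)=0$ as a vector of scalar polynomials in $x$. The $i$-th row reads $\sum_{j=0}^{N-1} W_{ij}(h(x))\,x^j \equiv 0$ in $\mathbb{P}$, and since $\{x^j h^i(x) : 0\le j\le N-1,\ i\ge 0\}$ is the basis~\eqref{base} of $\mathbb{P}$, each $W_{ij}(h(x))$ must vanish identically in $x$. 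Because $h$ is nonconstant, this forces $W_{ij}(z)\equiv 0$ as a polynomial in $z$, which is (c). The initial conditions transfer across the factorization automatically. The step I expect to require the most care is this last linear-independence argument: one must resist the temptation to formally cancel $\mathcal{P}_0(x)$ on the right and instead invoke the basis~\eqref{base} to upgrade a vector identity in $x$ to a genuine matrix polynomial identity in $z$.
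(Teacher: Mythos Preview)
Your proposal is correct and mirrors the paper's own argument, which is given in the paragraphs immediately preceding the theorem rather than in a formal proof environment: the paper also stacks the $N$ scalar relations and substitutes $n=mN$ for (a)$\Rightarrow$(b), reads components for (b)$\Rightarrow$(a), and multiplies the matrix recurrence by $\mathcal{P}_0$ for (c)$\Rightarrow$(b). The one point where you diverge is (b)$\Rightarrow$(c): the paper handles this by invoking Dur\'an's Theorem~\ref{favardrr2n+1} transplanted from the canonical basis to the basis~\eqref{base}, whereas you give a direct argument via the defect $W(z)$ and linear independence of $\{x^j h^i(x)\}$; your route is more self-contained and makes explicit exactly the cancellation step the paper leaves implicit.
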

$\phantom{ola}$Now we consider the sequence of matrix polynomials $\{V_m\}_{m \in {\mathbb{N}}}$ defined by
\begin{eqnarray}
 \label{rrvm} xV_m(x)=A_m V_{m+1}(x) +B_m
V_m(x)+C_{m} V_{m-1}(x)\quad m\geq 0,
\end{eqnarray} with initial conditions
$\displaystyle V_{-1}(x)=0_{N
\times N} \quad \mbox{and} \quad V_{0}(x)=I_{N \times N}\,$.

$\phantom{ola}$The first question is to know when a sequence of matrix polynomials defined by~\eqref{rrvm} is related to the matrix orthogonality.

$\phantom{ola}$If $C_m=A_{m-1}^T$ and $B_m$ is  a positive definite  matrix of measures $\widetilde{W}$ supported on the real line then the polynomials $\{V_m\}_{m \in {\mathbb{N}}}$
are orthonormal with respect to a left inner product, i.e.,
 \begin{eqnarray}\label{pim}
\langle V_i, V_j \rangle = \int_{{\mathbb{R}}} V_i(x)d\widetilde{W}(x)V_j^T (x)=\delta_{i,j}I_{N \times N}.
\end{eqnarray}
$\phantom{ola}$In the last years several authors have studied analytic properties of matrix orthonormal polynomials (see for example~\cite{Dur93,Dur95,Dur96,DurDaneriVias}) and their connections with the spectral theory of linear differential operators with matrix polynomials as coefficients.

$\phantom{ola}$In the case when neither $C_m=A_{m-1}^T$ nor $B_m$ are symmetric we cannot guarantee that the system of matrix polynomials $\{V_m\}_{m \in {\mathbb{N}}}$ satisfying the recurrence relation~\eqref{rrvm} is orthogonal with respect to a inner product induced by a  positive definite matrix of measures $\widetilde{W}$.

$\phantom{ola}$In~\cite{Dettealunos} the authors presented a result that characterizes the existence of a matrix of measures $\widetilde{W}$ such that the system of polynomials $\{V_m\}_{m \in {\mathbb{N}}}$ is orthogonal in the sense of~\eqref{pim}. In fact, if the matrices $A_m$ and $C_m$, for $m \in {\mathbb{N}}$, in the recurrence relation~\eqref{rrvm}, are non-singular then there exists a matrix of measures on the real line with a positive definite Hankel matrix as moment matrix such that the system of polynomials $\{V_m\}_{m \in {\mathbb{N}}}$ defined by~\eqref{rrvm} is orthogonal with respect to the measure $\widetilde{W}$ in the sense of~\eqref{pim} if and only if there exists a sequence of non-singular matrices $\{R_m\}_{m \in {\mathbb{N}}}$ such that the following relations hold: \\
$\phantom{ola}$$\bullet$ $\displaystyle R_mB_mR_m^{-1}\quad \mbox{is symmetric,}\quad \forall\, m \in {\mathbb{N}}_0 \, $, \\
$\phantom{ola}$$\bullet$ $\displaystyle R_m^TR_m=C_m^{-T} \cdots C_1^{-T}(R_0^TR_0)A_0 \cdots A_{m-1}, \,\,\forall\,m \in {\mathbb{N}}_0 \, $.

$\phantom{ola}$In this contribution we prove that a recurrence relation ~\eqref{rrvm} characterizes a different kind of orthogonality. The structure of the paper is as follows:
In section~$2$, we present the algebraic theory of the sequences of vector polynomials. In this context, we define a vector linear functional and we introduce the concept of right and left-orthogonality with respect to this linear functional.
In section~$3$, we present a reinterpretation of the matrix or\-tho\-go\-nality in terms of the vector orthogonality showing that there are two sequences of matrix orthogonal polynomials with respect to a matrix of measures, not necessarily positive definite,  which are bi-orthogonal with respect to a vector linear functional.
In section~$4$, we analyze two type Hermite-Pad\'{e} approximation problems and, finally, a Markov's type Theorem is deduced.

\section{Vector orthogonality}

$\phantom{ola}$Let $(\mathbb{P}^{N})^*$ be the linear space of vector linear functionals defined on the linear space of vector polynomials with complex coefficients $\mathbb{P}^{N}$, i.e., $(\mathbb{P}^{N})^*$ is the {\it dual space} of $\mathbb{P}^{N}$. In this space we define a
\textit{vector of functionals} as follows.
 \begin{defi}
\label{vec_fun_lin}Let $ u^{j}:\mathbb{P}\rightarrow
\mathbb{R}$ with $ j=1,\ldots ,N$ be linear functionals.
We define the \textit{vector of functionals} $
{\mathcal{U}}= \left[
 u^{1} \cdots \,
u^{N} \right] ^{T}$ in $ \mathbb{P}^{N}$
with values in ~$ \mathcal{M} _{N\times N}(\mathbb{R})$,~by
 \begin{equation*}
\mathcal{U}(\mathcal{P}):= (\mathcal{U}.\mathcal{P} ^{T} )
^{T}=\left[
 \begin{matrix}
\langle u^1, p_1 \rangle  & \cdots & \langle u^N, p_1 \rangle  \\
\vdots & \ddots & \vdots \\
\langle u^1, p_N \rangle  & \cdots & \langle u^N, p_N \rangle
\end{matrix}
\right] \, ,
 \end{equation*}
where \ ``$\displaystyle( . )$'' \ means the symbolic product of
$ \mathcal{U}$ and $
\mathcal{P}^{T}.$
\end{defi}
$\phantom{ola}$Let $\displaystyle \widehat{A}(x) = \sum_{k=0}^{l}A_{k}\, x^{k}$, where
$ A_{k}\in \mathcal{M}_{N\times N}(\mathbb{R}),$ be a matrix polynomial and
$ \mathcal{U }$  be a vector of li\-ne\-ar functionals.
Let us consider the vector of li\-ne\-ar functionals, the so called \textit{left multiplication of $\mathcal{U}$
by $\widehat{A}$}, that we will denote  by $\widehat{A}\, \mathcal{U}$,
such that
 \begin{equation*}
 (\widehat{A} \, \mathcal{U} )(\mathcal{P} ):= (
  \widehat{A}\,\mathcal{U} . \mathcal{P}
^{T} )^{T}=\sum_{k=0}^{l} (x^{k}\,\mathcal{U} )( \mathcal{P}) \,
(A_{k} )^{T} \, .
 \end{equation*}
$\phantom{ola}$We will introduce the concept of sequence of vector polynomials left-orthogonal with respect to the vector of linear functionals ${\mathcal{U}}$ and we will prove that ${\mathcal{U}}$ is {\it quasi-definite}, i.e, there exists a unique sequence of vector polynomials, up to the multiplication on the left by a non-singular matrix, that is left-orthogonal with respect to ${\mathcal{U}}$.
 \begin{defi} Let $\{ p_n \}_{n \in \mathbb{N}}$ be a sequence of scalar polynomials with $\deg p_n = n \, $, $n \in \mathbb{N}$.
 Let $h$ be a polynomial of fixed degree $N$, $\{{\mathcal{B}}_m\}_{m \in {\mathbb{N}}}$ be a sequence of vector polynomials with
$\displaystyle {\mathcal{B}}_m (x)= \left[ p_{mN}(x)\, p_{mN+1}(x)\, \cdots \,
p_{(m+1)N-1}(x) \right]^T ,$ and let ${\mathcal{U}}=\left[u^{1} \cdots \,
u^{N} \right] ^{T}$ be a vector of linear functionals.  $\{{\mathcal{B}}_m\}_{m \in {\mathbb{N}}}$ is said to be {\it left-orthogonal} with respect to the vector of linear functionals ${\mathcal{U}}$ if
 \begin{itemize}
\item[(a)] $(h^{k} {\mathcal{U}}) \left( {\mathcal{B}}_m \right)= 0_{N \times
N}$, $k=0,1, \ldots ,m-1$.
\item[(b)] $(h^{m} {\mathcal{U}}) \left( {\mathcal{B}}_m \right)=
\Delta_m$, $m \in \mathbb{N},$ where $\Delta_m$ is a non-singular upper triangular matrix.
\end{itemize}
\end{defi}

$\phantom{ola}$We introduce the notion of {\it moment} associated with the vector of linear functionals~${\mathcal{U}}$.
Taking into account that $\{{{\mathcal{P}}_j}\}_{j \in {\mathbb{N}}},$ with
 $\displaystyle {\mathcal{P}}_j(x)=(h(x))^j{\mathcal{P}}_0(x)$
and ${\mathcal{P}}_0(x)=[1 \, x\, \cdots \, x^{N-1}]^T,$  is a basis in
the linear space of vector polynomials ${\mathbb{P}}^N,$ we denote $(x^k{\mathcal{U}})({\mathcal{P}}_j) = {\mathcal{U}}^k_j$
 the {\it j-$th$ moment} associated with the vector of linear functionals~$x^k {\mathcal{U}}$.

$\phantom{ola}$The {\it Hankel matrices} associated with ${\mathcal{U}}$ are the matrices
\begin{eqnarray*} 
 D_m =\left[%
\begin{array}{ccc}
  {\mathcal{U}}_0 & \cdots &  {\mathcal{U}}_m  \\
  \vdots & \ddots & \vdots \\
   {\mathcal{U}}_m & \cdots &  {\mathcal{U}}_{2m}  \\
\end{array}%
\right],\,  m \in {\mathbb{N}},
\end{eqnarray*}
where ${\mathcal{U}}_j$ are  $j$-th moments associated with the vector of linear functionals ${\mathcal{U}}$.
${\mathcal{U}}$ is said to be {\it quasi-definite} if all leading principal submatrices of $D_m, \, m \in {\mathbb{N}},$ are non-singular.

$\phantom{ola}$The following result provides a necessary and sufficient condition for the existence of a sequence of vector polynomials which are left-orthogonal with respect to the vector of linear functionals $\mathcal{U}$.
\begin{teo}
 Let $\mathcal{U}$ be a vector of linear functionals. Then ${\mathcal{U}}$ is quasi-definite if, and only if, there exists a unique sequence of vector polynomials $\{{\mathcal{B}}_m\}_{m \in {\mathbb{N}}}$ such that ${\mathcal{B}}_m =\sum_{j=0}^m \alpha^m_j {\mathcal{P}}_j$, where $\alpha^m_j \in {\mathcal{M}}_{N\times N}({\mathbb{R}})$ with $\alpha_m^m$ is non-singular lower triangular matrix and a unique sequence, $\{\Delta_m\}_{m \in {\mathbb{N}}}$, of non-singular upper triangular matrices such that
 $\displaystyle
 (h^{k} {\mathcal{U}}) \left( {\mathcal{B}}_m \right)= \Delta_m \delta_{k,m}, \, k=0,1,\ldots, m, \,\, m \in {\mathbb{N}} \, .$
\end{teo}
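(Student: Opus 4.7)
The plan is to imitate the classical scalar proof of this equivalence, working throughout with the block Hankel structure of $D_m$. The key algebraic identity is that, because $\mathcal{P}_j = h^j \mathcal{P}_0$ and $h$ is a scalar polynomial, $(h^k \mathcal{U})(\mathcal{P}_j) = \mathcal{U}(h^{k+j}\mathcal{P}_0) = \mathcal{U}_{k+j}$; the $(i,j)$ block of $D_m$ is thus precisely the moment tested against $h^i \mathcal{U}$, which is exactly what appears in the orthogonality conditions.

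For the direction \emph{quasi-definite $\Rightarrow$ existence/uniqueness}, I would look for $\mathcal{B}_m = \sum_{j=0}^{m} \alpha_j^m \mathcal{P}_j$ and assemble the $m+1$ conditions $(h^k \mathcal{U})(\mathcal{B}_m) = \Delta_m\,\delta_{k,m}$, $k=0,\ldots,m$, into one block equation
\begin{equation*}
[\alpha_0^m \;\; \cdots \;\; \alpha_m^m]\, D_m \;=\; [\,0_{N\times N} \;\; \cdots \;\; 0_{N\times N}\;\; \Delta_m].
\end{equation*}
Non-singularity of $D_m$ yields $\alpha_j^m = \Delta_m (D_m^{-1})_{m,j}$; the extra requirement that $\alpha_m^m$ be lower triangular and $\Delta_m$ upper triangular then reduces to a $UL$-factorisation of the trailing $N\times N$ block $(D_m^{-1})_{m,m}$, whose existence and uniqueness (up to the canonical diagonal normalisation) will follow from the non-vanishing of the appropriate ``mixed'' principal minors of $D_m$ via a Schur-complement argument, which is where the full ``scalar'' quasi-definiteness hypothesis is genuinely used.

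For the converse, given the sequence $\{\mathcal{B}_m\}$, I would assemble the coefficient matrices $\alpha_j^i$ ($0 \leq j \leq i \leq m$) into the block lower-triangular matrix $T_m$ whose diagonal blocks $\alpha_i^i$ are themselves lower triangular (so $T_m$ is plain lower triangular and non-singular), and translate the orthogonality into
\begin{equation*}
T_m\, D_m\, T_m^T \;=\; \operatorname{diag}\bigl(\Delta_0 (\alpha_0^0)^T,\, \ldots,\, \Delta_m(\alpha_m^m)^T\bigr) \;=:\; R_m.
\end{equation*}
Each diagonal block of $R_m$ is a product (upper triangular)$\cdot$(upper triangular), so $R_m$ is genuinely upper triangular with non-zero diagonal; consequently $D_m = T_m^{-1} R_m T_m^{-T}$ is the product of a lower triangular non-singular matrix and an upper triangular non-singular matrix, so every leading $k\times k$ principal submatrix of $D_m$ factors as the product of the corresponding leading triangular blocks of the two factors and is therefore non-singular.

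The principal obstacle lies in the triangular bookkeeping common to both directions: in the forward implication, verifying that the $UL$-factorisation of $(D_m^{-1})_{m,m}$ is available under the given quasi-definiteness hypothesis; and, in the converse, that the triangular pattern propagates to leading principal submatrices of \emph{all} sizes, not merely those whose dimension is a multiple of $N$. Both points rely essentially on the refined ``scalar'' notion of quasi-definiteness encoded in the non-singularity of every leading principal submatrix of $D_m$, rather than only the block-sized ones.
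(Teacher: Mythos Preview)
Your plan is essentially the paper's argument, recast globally rather than inductively. Both directions rest on the block system
\[
[\alpha_0^m\;\cdots\;\alpha_m^m]\,D_m=[0\;\cdots\;0\;\Delta_m],
\]
and on the fact that scalar quasi-definiteness of $D_m$ is exactly the condition for a genuine (scalar) $LU$-factorisation. The paper peels this off one block at a time, invoking the $LU$ decomposition of the successive Schur complements $\mathcal{U}_0$, $\mathcal{U}_2-\mathcal{U}_1\mathcal{U}_0^{-1}\mathcal{U}_1$, $\ldots$\,; your global factorisation $D_m=T_m^{-1}(T_mD_m)$ packages the same computation in one step and makes it clearer why the non-block leading minors are controlled as well.

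One genuine slip in your converse: $T_mD_mT_m^{\,T}$ is \emph{not} block diagonal. The left-orthogonality conditions only determine $(T_mD_m)_{i,k}=(h^k\mathcal{U})(\mathcal{B}_i)$ for $k\le i$; the blocks with $k>i$ are unconstrained (they involve higher moments), so $T_mD_m$ is merely block \emph{upper} triangular with diagonal blocks $\Delta_i$, and $T_mD_mT_m^{\,T}$ is block upper triangular with diagonal blocks $\Delta_i(\alpha_i^i)^{T}$. Fortunately your stated conclusion --- that the product is scalar upper triangular with non-zero diagonal --- remains true, since a block upper triangular matrix whose diagonal blocks are themselves upper triangular is scalar upper triangular. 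Hence $D_m=T_m^{-1}\cdot(T_mD_m)$ is already the desired scalar $LU$ factorisation, and you can drop the extra factor $T_m^{\,T}$ entirely. The paper's step-by-step Schur-complement argument sidesteps this issue by never forming the full product.
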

\begin{proof} To prove that ${\mathcal{U}}$ is quasi-definite. Let $\{{\mathcal{B}}_m\}_{m \in {\mathbb{N}}}$ be a sequence of vector polynomials with  $ {\mathcal{B}}_m =
\sum_{j=0}^m \alpha^m_j {\mathcal{P}}_j$, where $\alpha^m_j \in
{\mathcal{M}}_{N\times N}({\mathbb{R}})$ and
$\{{\mathcal{P}}_j\}_{j \in {\mathbb{N}}}$ is a basis in~${\mathbb{P}}^N,$ such that
$\displaystyle {\mathcal{P}}_j(x)=(h(x))^j {\mathcal{P}}_0(x), \quad
{\mathcal{P}}_0(x)=\left[1 \, x\, \cdots\, x^{N-1} \right]^T \, .$

$\phantom{ola}$From the orthogonality conditions, the vector sequence of polynomials $\{{\mathcal{B}}_m\}_{m \in {\mathbb{N}}}$ is left-orthogonal with respect to the vector of linear functionals $\mathcal{U}$ if, for $k=0, \ldots, m-1$,
\begin{eqnarray*}  \left( h^{k} \mathcal{U}\right)
\left( {\mathcal{B}}_m \right)= \left( h^{k} \mathcal{U}\right)
(\sum^m_{j=0} \alpha^m_j {\mathcal{P}}_j )=\sum^m_{j=0}
\alpha^m_j ( h^{k} \mathcal{U}) \left( {\mathcal{P}}_j \right)= 0_{N
\times N},
\end{eqnarray*} and for all $m \in {\mathbb{N}}$,
\begin{eqnarray*} \left( h^{m} \mathcal{U}\right)
\left( {\mathcal{B}}_m \right)= \left( h^{m} \mathcal{U}\right)
(\sum^m_{j=0} \alpha^m_j {\mathcal{P}}_j )=\sum^m_{j=0}
\alpha^m_j \left( h^{m} \mathcal{U}\right) \left( {\mathcal{P}}_j
\right)= \Delta_m.
\end{eqnarray*}
Taking into account $\displaystyle (h^k {\mathcal{U}})({\mathcal{P}}_j)={\mathcal{U}}({\mathcal{P}}_{j+k}),$ the above conditions can be read as
\begin{eqnarray}\label{exite2}
 \left[
\begin{array}{cccc} \alpha_0^m & \alpha_1^m & \cdots & \alpha_m^m \\
\end{array} \right]\left[%
\begin{array}{cccc}
  {\mathcal{U}}({\mathcal{P}}_0)   & \cdots & {\mathcal{U}}({\mathcal{P}}_m)\\
  \vdots  &  \ddots & \vdots \\
  {\mathcal{U}}({\mathcal{P}}_m)   & \cdots & {\mathcal{U}}({\mathcal{P}}_{2m}) \\
\end{array}%
\right]=\left[
 \begin{array}{cccc}
         0 & 0 & \cdots & \Delta_m \\
 \end{array}\right].
\end{eqnarray}
 For $m=0,$ in~\eqref{exite2} we have $\alpha_0^0 {\mathcal{U}}_0=\Delta_0$. Using the non-singularity of the matrices $\alpha_{0}^{0}$ and $\Delta _{0},$ $ \mathcal{U}_{0}$ is a non-singular matrix. In an analog way, taking $m=1$ in~\eqref{exite2}, we have
\begin{equation*}
\left\{
\begin{array}{l}
\alpha_{0}^{1}\,\mathcal{U}_{0}+\alpha_{1}^{1}\,\mathcal{U}_{1}=0_{N\times N} \\
\alpha_{0}^{1}\,\mathcal{U}_{1}+\alpha_{1}^{1}\,\mathcal{U}_{2}=\Delta_{1},
\end{array} \quad \mbox{i.e.} \quad \alpha_1^1 (\mathcal{U}_{2}-\mathcal{U}_{1}\mathcal{U}_{0}^{-1}\mathcal{U}_{1})=\Delta_1.
\right.
\end{equation*}
Since $\Delta_1$ and $\alpha_1^1$ are non-singular matrices then
$\det(\mathcal{U}_{2}-\mathcal{U}_{1}\mathcal{U}_{0}^{-1}\mathcal{U}_{1}) \neq 0$ and, as a consequence, the second leading principal submatrix is  non-singular.
This argument can be inductively used and we obtain that $\mathcal{U}$ is quasi-definite.

$\phantom{ola}$Conversely, to find the vector sequence of polynomials such that $\{{\mathcal{B}}_m\}_{m \in {\mathbb{N}}}$ with ${\mathcal{B}}_m =\sum_{j=0}^m \alpha^m_j {\mathcal{P}}_j$, where $\alpha^m_j \in {\mathcal{M}}_{N\times N}({\mathbb{R}})$ and where $\alpha_m^m$ is non-singular lower triangular matrix such that
 $\displaystyle (h^{k} {\mathcal{U}}) \left( {\mathcal{B}}_m \right)= \Delta_m \delta_{k,m}, \, k=0,1,\ldots, m, \,\, m \in {\mathbb{N}} \, ,$
is equivalent to solve
\begin{eqnarray*}\left[   \begin{array}{cccc} \alpha_0^m & \alpha_1^m & \cdots & \alpha_m^m \\
\end{array} \right]\left[%
\begin{array}{cccc}
  {\mathcal{U}}({\mathcal{P}}_0)   & \cdots & {\mathcal{U}}({\mathcal{P}}_m)\\
  \vdots  &  \ddots & \vdots \\
  {\mathcal{U}}({\mathcal{P}}_m)   & \cdots & {\mathcal{U}}({\mathcal{P}}_{2m}) \\
\end{array}%
\right]=\left[\begin{array}{cccc}
         0 & 0 & \cdots & \Delta_m \\
               \end{array}\right].
\end{eqnarray*}
 For $m=0$, we have $\alpha_0^0 {\mathcal{U}}_0=\Delta_0$. Using the non-singularity of ${\mathcal{U}}_0$, and the decomposition $LU$, we can find uniquely $\alpha_0^0$ a non-singular lower triangular matrix and $\Delta_0$ a non-singular upper triangular matrix such that $\alpha_0^0 {\mathcal{U}}_0=\Delta_0$.

$\phantom{ola}$For $m=1$ we have
\begin{equation*}
\left\{
\begin{array}{l}
\alpha_{0}^{1}\,\mathcal{U}_{0}+\alpha_{1}^{1}\,\mathcal{U}_{1}=0_{N\times N} \\
\alpha_{0}^{1}\,\mathcal{U}_{1}+\alpha_{1}^{1}\,\mathcal{U}_{2}=\Delta_{1},
\end{array} \quad \mbox{i.e.} \quad \alpha_1^1 (\mathcal{U}_{2}-\mathcal{U}_{1}\mathcal{U}_{0}^{-1}\mathcal{U}_{1})=\Delta_1.
\right.
\end{equation*}
Again, using that the second leading principal submatrix $\mathcal{U}_{2}-\mathcal{U}_{1}\mathcal{U}_{0}^{-1}\mathcal{U}_{1}$ is non-singular and the $LU$ decomposition we can find uniquely $\alpha_1^1$ a non-singular lower triangular matrix and $\Delta_1$ a non-singular upper triangular matrix such that $\alpha_1^1 =(\mathcal{U}_{2}-\mathcal{U}_{1}\mathcal{U}_{0}^{-1}\mathcal{U}_{1})=\Delta_1$.
We also obtain from $\alpha_{0}^{1}\,\mathcal{U}_{0}+\alpha_{1}^{1}\,\mathcal{U}_{1}=0_{N\times N}$, uniquely the matrix $\alpha_{0}^{1}$.
This argument can be inductively used and we obtain the stated result.
\end{proof}
\begin{teo}\label{FAVARD1}
Let $\mathcal{U}$ be a quasi-definite vector of linear functionals and let $\{{\mathcal{B}}_m\}_{m \in {\mathbb{N}}}$ be a sequence of vector
polynomials. Then, the following statements are equivalent:
\begin{itemize}
\item[(a)] The vector sequence of polynomials $\{{\mathcal{B}}_m\}_{m \in {\mathbb{N}}}$ is left-orthogonal with respect to the vector of linear functionals $\mathcal{U}$, i.e.,
\begin{eqnarray}
 \label{cotra1}
(h^k {\mathcal{U}}) \left( {\mathcal{B}}_m
\right)=\Delta_m \delta_{k,m} \, , \quad k=0,1,\ldots,m, \, m \in
{\mathbb{N},}
\end{eqnarray}
with $\Delta_m$ a non-singular $N\times N$ upper triangular matrix given~by
$$\Delta_m =C_m \,\cdots \,C_1 \,\Delta_0,
\, \ m \geq 1, $$ where $\Delta_0$ is a~$N\times N$ non-singular matrix and $\{C_m\}_{m \in {\mathbb{N}}}$ is a sequence of non-singular upper triangular matrices.
\item[(b)]  There exist sequences of~$N\times N$ matrices, by $\{A_m\}_{m \in {\mathbb{N}}},$$\{B_m\}_{m \in {\mathbb{N}}}$, and $\{C_m\}_{m \in {\mathbb{N}}}$ with $C_m$ a non-singular upper triangular matrix, such that
 \begin{eqnarray}
\label{recorr1} h(x) {\mathcal{B}}_m(x) = A_m {\mathcal{B}}_{m+1}(x)
+ B_m {\mathcal{B}}_m(x) + C_m {\mathcal{B}}_{m-1}(x), \quad m\geq
1,
\end{eqnarray}
with
$\displaystyle {\mathcal{B}}_{-1}(x)=0_{N \times 1} \
\mbox{and } \ {\mathcal{B}}_{0}(x)={\mathcal{P}}_0(x)$,
 where
$\displaystyle {\mathcal{P}}_0(x)=\left[1\,x\, \cdots\, x^{N-1}\right]^T$.
\end{itemize}
\end{teo}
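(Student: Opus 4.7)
The plan is to prove the two implications separately, leaning on the preceding theorem (existence and uniqueness of the left-orthogonal sequence associated with a quasi-definite $\mathcal{U}$) as the key algebraic input.

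For \textbf{(a)$\Rightarrow$(b)}, I would use that each $\mathcal{B}_m=\sum_{j=0}^m\alpha_j^m\mathcal{P}_j$ with $\alpha_m^m$ non-singular, so that $\{\mathcal{B}_0,\ldots,\mathcal{B}_{m+1}\}$ spans the same subspace as $\{\mathcal{P}_0,\ldots,\mathcal{P}_{m+1}\}$. Since the polynomial entries of $h(x)\mathcal{B}_m(x)$ have degrees in the range $(m+1)N,\ldots,(m+2)N-1$, it admits a unique expansion $h\mathcal{B}_m=\sum_{k=0}^{m+1}\lambda_k^m\mathcal{B}_k$ with matrix coefficients $\lambda_k^m$. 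Applying $(h^j\mathcal{U})$ to this expression and using~\eqref{cotra1} on the right gives $(h^{j+1}\mathcal{U})(\mathcal{B}_m)=\lambda_j^m\Delta_j+\sum_{k<j}\lambda_k^m(h^j\mathcal{U})(\mathcal{B}_k)$. Running $j=0,1,\ldots,m-2$ in turn, the left-hand side vanishes by orthogonality and the tail is killed by the induction hypothesis, forcing $\lambda_j^m\Delta_j=0$; non-singularity of $\Delta_j$ then yields $\lambda_j^m=0$ for $j\leq m-2$. At $j=m-1$ one extracts $\Delta_m=\lambda_{m-1}^m\Delta_{m-1}$, so setting $C_m:=\lambda_{m-1}^m=\Delta_m\Delta_{m-1}^{-1}$ produces a non-singular upper triangular matrix, and iterating $\Delta_m=C_m\Delta_{m-1}$ gives $\Delta_m=C_m\cdots C_1\Delta_0$. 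With $B_m:=\lambda_m^m$ and $A_m:=\lambda_{m+1}^m$, this is the recurrence~\eqref{recorr1}.

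For \textbf{(b)$\Rightarrow$(a)}, the preceding theorem provides a unique left-orthogonal sequence $\{\widetilde{\mathcal{B}}_m\}$ attached to $\mathcal{U}$, and by the direction already proved $\{\widetilde{\mathcal{B}}_m\}$ satisfies a recurrence of the form~\eqref{recorr1}. Normalizing so that $\widetilde{\mathcal{B}}_0=\mathcal{P}_0$ matches the initial data with those of $\{\mathcal{B}_m\}$. I would then set $I_{m,k}:=(h^k\mathcal{U})(\mathcal{B}_m)$ and apply $(h^k\mathcal{U})$ to the recurrence in (b) to obtain
\[ I_{m,k+1}=A_m I_{m+1,k}+B_m I_{m,k}+C_m I_{m-1,k}, \]
together with $I_{-1,k}=0$ and $I_{0,k}=\mathcal{U}_k$. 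A double induction on $m$ (outer) and $k$ (inner) then shows $I_{m,k}=0$ for $k<m$ and $I_{m,m}=C_m\cdots C_1\mathcal{U}_0$, with $\Delta_0=\mathcal{U}_0$ automatically non-singular by quasi-definiteness; this is precisely the orthogonality~\eqref{cotra1} with $\Delta_m=C_m\cdots C_1\Delta_0$.

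The main obstacle will lie in~(b)$\Rightarrow$(a): closing the double induction on $I_{m,k}$ requires identifying the coefficients $A_m,B_m,C_m$ supplied by (b) with those produced by applying (a)$\Rightarrow$(b) to the unique orthogonal sequence $\{\widetilde{\mathcal{B}}_m\}$. Here the upper triangular non-singular structure imposed on $C_m$ is indispensable, since it controls the triangular normalization of $\Delta_m$ and, together with the initial condition $\mathcal{B}_0=\mathcal{P}_0$, forces the recurrence solution to coincide with $\{\widetilde{\mathcal{B}}_m\}$, thereby transferring orthogonality back to $\{\mathcal{B}_m\}$.
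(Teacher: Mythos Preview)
Your argument for (a)$\Rightarrow$(b) matches the paper's: expand $h\mathcal{B}_m$ in the basis $\{\mathcal{B}_k\}$, kill the low-order coefficients by successively applying $h^j\mathcal{U}$, then read off $C_m=\Delta_m\Delta_{m-1}^{-1}$.

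The gap is in (b)$\Rightarrow$(a). Your double induction on $I_{m,k}$ cannot start: to conclude $I_{m,k}=0$ for $k<m$ you need the base case $I_{m,0}=\mathcal{U}(\mathcal{B}_m)=0$ for $m\geq1$, and the relation $I_{m,k+1}=A_mI_{m+1,k}+B_mI_{m,k}+C_mI_{m-1,k}$ only propagates forward in $k$, so it gives no control over the column $k=0$. If $\mathcal{U}$ is a pre-given quasi-definite functional there is simply no reason for $\mathcal{U}(\mathcal{B}_m)$ to vanish: different admissible triples $(A_m,B_m,C_m)$ generate different sequences $\{\mathcal{B}_m\}$, almost none of which are orthogonal with respect to that particular $\mathcal{U}$. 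Your fallback of identifying $\{\mathcal{B}_m\}$ with the unique left-orthogonal sequence $\{\widetilde{\mathcal{B}}_m\}$ is circular: the upper-triangular non-singularity of $C_m$ does not pin down the recurrence (the matrices $A_m,B_m$ are unconstrained), so $\mathcal{B}_m\neq\widetilde{\mathcal{B}}_m$ in general and no amount of triangular bookkeeping forces them to coincide.

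The paper resolves this by \emph{constructing} $\mathcal{U}$ from $\{\mathcal{B}_m\}$: it declares $\mathcal{U}(\mathcal{B}_0)=\Delta_0$ (a chosen non-singular upper-triangular matrix) and $\mathcal{U}(\mathcal{B}_m)=0_{N\times N}$ for $m\geq1$, which determines the moments $\mathcal{U}_j$ recursively since the leading coefficients $\gamma_m^m$ of the $\mathcal{B}_m$ are non-singular. This manufactures the base case $I_{m,0}=0$ by fiat, after which the induction you wrote down runs exactly as in the paper. In other words, (b)$\Rightarrow$(a) is a Favard-type statement --- the functional is an output, not an input --- and that construction is the step missing from your plan.
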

 \begin{proof}
To prove  $(a)\Rightarrow(b),$
first we  consider the polynomial $h {\mathcal{B}}_{m}$ and then we take into account that
$h{\mathcal{B}}_{m}$ is a polynomial of degree $m+1$ that can be written
 \begin{eqnarray}
 \label{soma1} h(x) {\mathcal{B}}_{m}(x)
= \sum^{m+1}_{k=0} A^m_k {\mathcal{B}}_{k}(x),\, A^m_k \in
{\mathcal{M}}_{N\times N}({\mathbb{R}}).
\end{eqnarray}
We will prove that $A_k^m = 0_{N \times N}$, for $k=0,1, \ldots ,m-2$. Indeed, if we
apply the vector of functionals $\mathcal{U}$ to both sides of ~\eqref{soma1} then we get
$A_0^m = 0_{N \times N}$.

$\phantom{ola}$Thus we can rewrite~(\ref{soma1})
 \begin{eqnarray}
 \label{soma2}
h(x) {\mathcal{B}}_{m}(x) = \sum^{m+1}_{k=1} A^m_k {\mathcal{B}}_{k}(x)\,
.\end{eqnarray}
Again, by applying the vector of functionals $\mathcal{U}$ to both sides of ~\eqref{soma2}
we obtain $$A_1^m = 0_{N \times N}.$$
Iterating this procedure, i.e., first by multiplying by $h^2$, afterwards by $h^3$ and then, successively, applying
$\mathcal{U}$ we get
 \begin{eqnarray*}
  A_k^m = 0_{N \times N}\, , \quad
\mbox{for} \quad k=0,1, \ldots , m-2 \, .
 \end{eqnarray*}
 Thus, we can rewrite~(\ref{soma1})
 \begin{eqnarray} \label{recorr2}h(x)
{\mathcal{B}}_{m}(x) = A_{m-1}^m{\mathcal{B}}_{m-1}(x) + A_m^m
{\mathcal{B}}_{m}(x) + A_{m+1}^m {\mathcal{B}}_{m+1}(x).
 \end{eqnarray}
If we multiply~(\ref{recorr2}) by $h^{m-1}$ and we apply the vector of linear functionals  $\mathcal{U}$ then we obtain
\begin{equation*}
 A_{m-1}^m
= (h^m {\mathcal{U}})\left(  {\mathcal{B}}_{m}\right) \left(
(h^{m-1} {\mathcal{U}})\left(   {\mathcal{B}}_{m-1}\right)
\right)^{-1}
= \Delta_m \Delta_{m-1}^{-1} \quad m\geq 1 \, .
\end{equation*}
Using the same technique we obtain
\begin{eqnarray*} A_m^m &=&
\left[(h^{m+1}{\mathcal{U}})\left({\mathcal{B}}_{m}\right)-
A_{m-1}^m(h^{m}{\mathcal{U}})\left({\mathcal{B}}_{m-1}\right)\right]
\left[ (h^{m}{\mathcal{U}})\left(
{\mathcal{B}}_{m}\right)\right]^{-1} \\ &=&
\left[(h^{m+1}{\mathcal{U}})\left({\mathcal{B}}_{m}\right)-
 \Delta_m \Delta_{m-1}^{-1}(h^{m} {\mathcal{U}})\left(
{\mathcal{B}}_{m-1}\right)\right] \Delta_{m}^{-1}
\end{eqnarray*}
and
 $\displaystyle A_{m+1}^m =
 \left[(h^{m+2}{\mathcal{U}})
 \left({\mathcal{B}}_{m}\right)-
A_{m-1}^m (h^{m+1}{\mathcal{U}})\left(
{\mathcal{B}}_{m-1}\right)- A_m^m (h^{m+1}{\mathcal{U}})\left(
{\mathcal{B}}_{m}\right)\right]\Delta_{m+1}^{-1} $.

 The comparison with the coefficients in~(\ref{recorr1}) yields the following explicit expressions for the coefficients in the recurrence relation:
\begin{eqnarray*} A_m &=&
\left[(h^{m+2}{\mathcal{U}})\left({\mathcal{B}}_{m}\right)-
A_{m-1}^m (h^{m+1}{\mathcal{U}})\left(
{\mathcal{B}}_{m-1}\right)- A_m^m (h^{m+1}{\mathcal{U}})\left(
{\mathcal{B}}_{m}\right)\right] \Delta_{m+1}^{-1}, \\ B_m &=&
\left[(h^{m+1}{\mathcal{U}})\left({\mathcal{B}}_{m}\right)-
A_{m-1}^m (h^{m}{\mathcal{U}})\left({\mathcal{B}}_{m-1}\right)\right]\Delta_{m}^{-1},
 \\ C_m &=& \Delta_m
\Delta_{m-1}^{-1}.
\end{eqnarray*}
To prove $(b)\Rightarrow(a),$ we must start by constructing a vector of linear functionals
${\mathcal{U}},$ satisfying ~\eqref{cotra1}, which is defined from the sequence of moments $\{{\mathcal{U}}_m\}_{m \in {\mathbb{N}}}$ using
\begin{equation} \label{xxx}
 {\mathcal{U}}\left({\mathcal{B}}_{0}\right) = \Delta_0 \, , \ \
 {\mathcal{U}}\left({\mathcal{B}}_{m}\right) = 0_{N \times N}, \,m\geq 1,
 \end{equation}
where $\Delta_0$ is a non-singular upper triangular matrix.
Since $\{{\mathcal{P}}_j\}_{j \in {\mathbb{N}}}$ with
$${\mathcal{P}}_j(x)=(h(x))^j {\mathcal{P}}_0(x)\quad \mbox{and} \quad {\mathcal{P}}_0(x)=\left[1\,x\, \cdots\,x^{N-1}\right]^T,$$
is a basis for $\mathbb{P}^N$, then there exists a unique family of matrices $\gamma_j^m \in {\mathcal{M}}_{N \times N}({\mathbb{R}})$ such that the
vector of polynomials ${\mathcal{B}}_m$ can be written ${\mathcal{B}}_m(x) = \sum_{j=0}^m \gamma_j^m {\mathcal{P}}_j(x)$.
Thus,
 \begin{itemize}
\item For $m=0,$
${\mathcal{U}}({\mathcal{B}}_{0})=\gamma_0^0{\mathcal{U}}({\mathcal{P}}_{0})$,
i.e., ${\mathcal{U}}_0=(\gamma_0^0)^{-1}\Delta_0$.
\item For $m=1,$ $\displaystyle{\mathcal{U}}( {\mathcal{B}}_1)=\sum_{j=0}^1 \gamma_j^1{\mathcal{U}}({\mathcal{P}}_{j})$, i.e.,
 ${\mathcal{U}}_1=-(\gamma_1^1)^{-1}\gamma_0^1{\mathcal{U}}_0 $.
\item For $m=2,$ $\displaystyle{\mathcal{U}}( {\mathcal{B}}_2)=\sum_{j=0}^2 \gamma_j^2{\mathcal{U}}({\mathcal{P}}_{j})$, i.e.,
 $\displaystyle {\mathcal{U}}_2=-\sum_{j=0}^1(\gamma_2^2)^{-1}\gamma_j^2{\mathcal{U}}_j $.
\end{itemize}
For $m\geq 3$, we have
$\displaystyle {\mathcal{U}}_m=-\sum_{j=0}^{m-1} (\gamma_m^m)^{-1} \gamma_j^m {\mathcal{U}}_j \, $.

$\phantom{ola}$First, we will prove that for  $\mathcal{U}$ defined as above, we have
 \begin{eqnarray*}
 (h^k {\mathcal{U}} )\left({\mathcal{B}}_m\right) =
0_{N \times N}\, , \quad m \geq k+1.
\end{eqnarray*}
To prove it, we apply  $\mathcal{U}$ in the recurrence relation:
\begin{eqnarray*}
{\mathcal{U}} \left(h{\mathcal{B}}_{m}
\right) = A_{m} {\mathcal{U}} \left( {\mathcal{B}}_{m+1} \right)+
B_m {\mathcal{U}} \left( {\mathcal{B}}_{m} \right) + C_m
{\mathcal{U}} \left( {\mathcal{B}}_{m-1} \right) = 0_{N \times
N} \, , \quad m\geq 2 \, .
\end{eqnarray*}
Again, if we multiply both sides of the recurrence relation by $h$, then we obtain
\begin{eqnarray*}
h^2(x){\mathcal{B}}_m(x) &=& h(x) A_{m} {\mathcal{B}}_{m+1}(x) + h(x) B_m
{\mathcal{B}}_m(x) + h(x) C_m {\mathcal{B}}_{m-1}(x),
\end{eqnarray*}
and, as a consequence, by applying
${\mathcal{U}}$ in the last relation, we have for $m\geq 3$
\begin{equation*} \left(h^2{\mathcal{U}}\right)
\left({\mathcal{B}}_{m} \right) = A_{m}
\left(h{\mathcal{U}}\right) \left({\mathcal{B}}_{m+1} \right)+ B_m
\left(h{\mathcal{U}}\right) \left({\mathcal{B}}_{m} \right) + C_m
\left(h{\mathcal{U}}\right) \left({\mathcal{B}}_{m-1} \right)
= 0_{N \times N} \, .
\end{equation*}
Proceeding in a similar way we get
 $$ (h^k
{\mathcal{U}} )\left({\mathcal{B}}_m\right) =0_{N \times N}\, , \ m\geq k+1,
\mbox{ i.e., }
(h^k {\mathcal{U}})
\left({\mathcal{B}}_m\right) =0_{N \times N}\, , \
k=0,1, \ldots ,m-1.
 $$
For $k=m,$ we have
\begin{equation*}
(h^m {\mathcal{U}}) \left({\mathcal{B}}_{m}
\right) = A_{m} (h^{m-1}{\mathcal{U}}) \left({\mathcal{B}}_{m+1}
\right)+ B_m (h^{m-1}{\mathcal{U}}) \left({\mathcal{B}}_{m} \right)
+ C_m (h^{m-1}{\mathcal{U}}) \left({\mathcal{B}}_{m-1} \right),
\end{equation*}
and so
\begin{equation*}
(h^m {\mathcal{U}})
\left({\mathcal{B}}_{m} \right) =  C_m (h^{m-1}{\mathcal{U}})
\left({\mathcal{B}}_{m-1} \right)
 =  C_m C_{m-1} \, \cdots \,C_1 \Delta_0 \, , \quad m \geq 1 \, .
\end{equation*}
Therefore, the moments associated with the vector of linear functionals $\mathcal{U}$ are uniquely determined from~\eqref{xxx}. Thus, we obtain the orthogonality conditions~\eqref{cotra1}. Hence, the result follows.
\end{proof}
$\phantom{ola}$Next we will introduce the concept of right-orthogonality with respect to a vector of linear functionals and, afterwards, we will show how the right and left vector orthogonality are connected.
\begin{defi}
Let ${\mathcal{U}}=\left[u^{1} \cdots \,
u^{N} \right] ^{T}$ be a vector of linear functionals and let consider a sequence of matrix polynomials $\{G_m\}_{m \in {\mathbb{N}}}$. $\{G_m\}_{m \in {\mathbb{N}}}$ is said to be {\it right-orthogonal} with respect to the vector of linear functionals ${\mathcal{U}}$ if
\begin{itemize}
\item[(a)] $\deg G_{m} = m $.
\item[(b)] $(G_m^T(h(x)) {\mathcal{U}}) \left( {\mathcal{P}}_j \right)= 0_{N \times
N}$, $j=0,1, \ldots ,m-1$.
\item[(c)] $(G_m^T(h(x)) {\mathcal{U}}) \left( {\mathcal{P}}_m \right)=
\Theta_m$, $m \in \mathbb{N},$ where $\Theta_m$ is a non-singular lower triangular matrix.
\end{itemize}
\end{defi}
$\phantom{ola}$Concerning  the right-orthogonality we obtain some analog results to those we found for left-orthogonality.
For example, the matrix right-orthogonal polynomial sequence is uniquely defined up to a multiplication on the right by a non-singular matrix and the vector of linear functionals ${\mathcal{U}}$ is quasi-definite with respect to the right-orthogonality. We will present these results but we shall skip the proofs since the techniques are the same as used in the left-orthogonality case.

%
\begin{teo}
Let $\mathcal{U}$ be a vector of linear functionals. Then, $\mathcal{U}$ is quasi-definite if and only if
there exists a sequence matrix polynomials $\{G_m\}_{m \in {\mathbb{N}}}$, with $G_m (h(x))= \sum_{j=0}^m \beta_j^m (h(x))^j,$ for $
\beta_j^m \in {\mathcal{M}}_{N \times N}({\mathbb{R}})$ where $\beta_m^m$ is non-singular upper triangular matrix and there exists a sequence of non-singular lower triangular matrices, $\{\Theta_m\}_{m \in {\mathbb{N}}}$, such that
$$(G_m^T(h(x)) {\mathcal{U}}) \left( {\mathcal{P}}_m \right)=
\Theta_m\delta_{j,m},\, j=0,1, \ldots ,m-1,\,\, m \in {\mathbb{N}}.$$
Moreover,
\begin{eqnarray}\label{imp2}G_m=\left[
                                  \begin{array}{cccc}
                                    1 & h & \cdots & h^m \\
                                  \end{array}
                                \right]
\left[%
\begin{array}{cccc}
  {\mathcal{U}}({\mathcal{P}}_0)   & \cdots & {\mathcal{U}}({\mathcal{P}}_m)\\
  \vdots  &  \ddots & \vdots \\
  {\mathcal{U}}({\mathcal{P}}_m)   & \cdots & {\mathcal{U}}({\mathcal{P}}_{2m}) \\
\end{array}%
\right]^{-1}\left[\begin{array}{c}
         0 \\ \vdots \\ \Theta_m
         \end{array}\right].
\end{eqnarray}
\end{teo}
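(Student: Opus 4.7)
The plan is to mirror the approach used for the left-orthogonality theorem, the key difference being that the unknowns $\beta_j^m$ now multiply the Hankel block on the right rather than on the left, and the triangular shapes (upper for $\beta_m^m$, lower for $\Theta_m$) are swapped.

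First, I would translate the orthogonality conditions $(G_m^T(h(x))\mathcal{U})(\mathcal{P}_j)=\Theta_m\delta_{j,m}$ for $j=0,\ldots,m$ into a linear algebraic system. Writing $G_m(h(x))=\sum_{j=0}^m \beta_j^m (h(x))^j$ and using the definition of left multiplication of a matrix polynomial on a vector of linear functionals, together with the identity $(h^k\mathcal{U})(\mathcal{P}_j)=\mathcal{U}(\mathcal{P}_{j+k})=\mathcal{U}_{j+k}$, the conditions become
\begin{equation*}
\sum_{j=0}^{m}\mathcal{U}_{k+j}\,\beta_j^m=\Theta_m\,\delta_{k,m},\quad k=0,1,\ldots,m,
\end{equation*}
which in block form reads $D_m\,[\beta_0^m\,\cdots\,\beta_m^m]^T=[0\,\cdots\,0\,\Theta_m]^T$. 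So the whole theorem reduces to a statement about the invertibility of $D_m$ and about decomposing a suitable Schur complement as (upper triangular)$\cdot$(inverse of the lower triangular).

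For the direction ``existence $\Rightarrow$ quasi-definite'', I would argue inductively exactly as in the left-orthogonality proof. The case $m=0$ reads $\mathcal{U}_0\,\beta_0^0=\Theta_0$ with $\beta_0^0$ upper triangular non-singular and $\Theta_0$ lower triangular non-singular, so $\mathcal{U}_0=\Theta_0(\beta_0^0)^{-1}$ is non-singular. For $m=1$, eliminating $\beta_0^1$ from the two block equations gives $(\mathcal{U}_2-\mathcal{U}_1\mathcal{U}_0^{-1}\mathcal{U}_1)\beta_1^1=\Theta_1$, which forces the Schur complement, hence the second leading principal submatrix of $D_1$, to be non-singular. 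The inductive step proceeds in the same manner, propagating non-singularity of all leading principal submatrices of every $D_m$.

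For the converse, the plan is to build $\beta_j^m$ and $\Theta_m$ recursively. At each step one gets $(\text{Schur complement of }D_m)\,\beta_m^m=\Theta_m$, and then recovers $\beta_0^m,\ldots,\beta_{m-1}^m$ by back-substitution from the remaining $m$ rows. The subtle point (which I would flag as the main obstacle and handle exactly as the paper did in the left case, claiming the technique is the same) is that one must factor the non-singular Schur complement as the product of an upper triangular non-singular matrix (which becomes $\beta_m^m$) times a lower triangular non-singular matrix (which becomes $\Theta_m$); this is the ``UL'' analogue of the LU decomposition used previously, obtained by applying LU to the reversed matrix, and it exists uniquely once the appropriate anti-principal minors are non-zero, a property guaranteed inductively by the construction.

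Finally, to obtain the closed form \eqref{imp2}, I would invert the block system directly: the column vector $[\beta_0^m\,\cdots\,\beta_m^m]^T$ equals $D_m^{-1}[0\,\cdots\,0\,\Theta_m]^T$, and since $G_m(h(x))=[1\,h(x)\,\cdots\,h(x)^m][\beta_0^m\,\cdots\,\beta_m^m]^T$, substitution yields
\begin{equation*}
G_m(h(x))=\bigl[1\ h(x)\ \cdots\ h(x)^m\bigr]\,D_m^{-1}\,\bigl[0\ \cdots\ 0\ \Theta_m\bigr]^T,
\end{equation*}
which is precisely \eqref{imp2}. The only work beyond the left-orthogonality proof lies in tracking the transposition of shapes, so I would present this proof mostly by analogy and spell out only the small computations above.
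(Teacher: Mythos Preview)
Your proposal is correct and follows exactly the approach the paper intends: the paper explicitly skips this proof, stating that ``the techniques are the same as used in the left-orthogonality case'', and your argument faithfully transposes that proof, including the derivation of the closed form~\eqref{imp2}. One small correction: the factorization $S\,\beta_m^m=\Theta_m$ with $\beta_m^m$ upper triangular and $\Theta_m$ lower triangular is equivalent to $S=\Theta_m(\beta_m^m)^{-1}$, which is an ordinary LU decomposition of the Schur complement (not a UL one), so the relevant hypothesis is again non-vanishing of the \emph{leading} principal minors---precisely what quasi-definiteness provides.
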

\begin{teo}\label{favard2} Let  $\mathcal{U}$ be a vector of linear functionals and let $\{G_m\}_{m \in {\mathbb{N}}}$ be a sequence of matrix polynomials. Then, the following statements are equivalent:
\begin{itemize}
\item[(a)] The  sequence of matrix polynomials $\{G_m\}_{m \in {\mathbb{N}}}$ is right-orthogonal with respect to the vector of linear functionals ${\mathcal{U}}$, i.e.,
    \begin{eqnarray*}
(G_m^T(h(x)) {\mathcal{U}}) \left(
{\mathcal{P}}_j \right)&=& 0_{N \times N}\, , \quad j=0,1, \ldots ,m-1, \\
( G_m^T(h(x)){\mathcal{U}}) \left( {\mathcal{P}}_m \right)&=& \Theta_m
\, , \quad m \in \mathbb{N},
\end{eqnarray*}
where $\Theta_m$ is a non-singular lower triangular matrix.
\item[(b)] There exist sequences of \, $N\times N$ \, matrices, \, $\{D_m\}_{m \in {\mathbb{N}}},$ \, $\{E_m\}_{m\in {\mathbb{N}}}, $ \, and \, $\{F_m\}_{m \in {\mathbb{N}}}$ \, with \, $F_m$ a non-singular lower triangular matrix, such that
\begin{eqnarray*}
\label{recorr1g} h(x) G_m(h(x)) =  G_{m+1}(h(x))D_m + G_m(h(x)) E_m
+  G_{m-1}(h(x))F_m, \ \ m \geq 1 \, ,
\end{eqnarray*}
with
$\displaystyle G_{-1}(x)=0_{N \times N} \ \mbox{ and } \ G_{0}(x)=I_{N \times
N} \, $.
\end{itemize}
\end{teo}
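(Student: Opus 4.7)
The plan is to adapt the proof of Theorem~\ref{FAVARD1} to the right-orthogonal setting, as foreshadowed by the authors' remark that the techniques carry over. The translation requires one preliminary algebraic observation: for any constant matrix $C$ and any matrix polynomial $\widehat{A}$,
$$((C\widehat{A})\mathcal{U})(\mathcal{P}) = (\widehat{A}\mathcal{U})(\mathcal{P})\,C^T,$$
which follows directly from the definition of left multiplication of $\mathcal{U}$ by a matrix polynomial (the implicit transpose in that definition converts a left constant multiplier into a right multiplier on the evaluation).

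For $(a)\Rightarrow(b)$, I would first expand
$$h(x)\,G_m(h(x)) = \sum_{k=0}^{m+1} G_k(h(x))\,A_k^m,$$
using that $\{G_k(h(x))\}_{k=0}^{m+1}$ is a right-basis for matrix polynomials in $h$ of degree at most $m+1$ (possible because the leading coefficient $\beta_m^m$ is non-singular upper triangular). I would then show $A_r^m = 0$ for $r \leq m-2$ by induction on $r$: transposing the expansion, applying $\mathcal{U}$, and evaluating at $\mathcal{P}_r$, the displayed identity together with the inductive hypothesis and right-orthogonality collapses the sum to
$$(G_m^T\mathcal{U})(\mathcal{P}_{r+1}) = \Theta_r\,A_r^m.$$
For $r \leq m-2$ the left-hand side vanishes by right-orthogonality, and non-singularity of $\Theta_r$ forces $A_r^m = 0$. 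Setting $D_m := A_{m+1}^m$, $E_m := A_m^m$, $F_m := A_{m-1}^m$ produces the three-term recurrence, and taking $r = m-1$ in the same calculation yields $\Theta_m = \Theta_{m-1}\,F_m$, so $F_m = \Theta_{m-1}^{-1}\Theta_m$ is non-singular lower triangular.

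For $(b)\Rightarrow(a)$, I would construct $\mathcal{U}$ by prescribing its moments: fix a non-singular lower triangular matrix $\Theta_0$, set $\mathcal{U}_0 := \Theta_0$, and impose $(G_m^T(h)\mathcal{U})(\mathcal{P}_0) = 0$ for each $m \geq 1$. Since $\beta_m^m$ is non-singular, this determines $\mathcal{U}_m$ uniquely from the earlier moments. Transposing the recurrence, applying $\mathcal{U}$, and evaluating at $\mathcal{P}_l$ yields
$$(G_m^T\mathcal{U})(\mathcal{P}_{l+1}) = (G_{m+1}^T\mathcal{U})(\mathcal{P}_l)\,D_m + (G_m^T\mathcal{U})(\mathcal{P}_l)\,E_m + (G_{m-1}^T\mathcal{U})(\mathcal{P}_l)\,F_m.$$
An induction on $l$, with base case $l = 0$ supplied by the construction, then verifies $(G_m^T\mathcal{U})(\mathcal{P}_l) = \Theta_m\,\delta_{l,m}$ for $l \leq m$, where the sequence $\Theta_m$ is defined recursively by $\Theta_m := \Theta_{m-1}F_m$ and inherits non-singular lower triangularity from $F_m$.

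The principal obstacle is bookkeeping the implicit transposes in the action of a matrix polynomial on $\mathcal{U}$ and propagating the identity $((C\widehat{A})\mathcal{U})(\mathcal{P}) = (\widehat{A}\mathcal{U})(\mathcal{P})\,C^T$ consistently through the recurrence; once this is fixed, both implications become direct mirrors of Theorem~\ref{FAVARD1} with ``left'' and ``right'' systematically interchanged and constant matrices multiplying on the right rather than the left.
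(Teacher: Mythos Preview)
Your proposal is correct and matches exactly what the paper intends: the authors explicitly omit the proof of Theorem~\ref{favard2}, stating that ``the techniques are the same as used in the left-orthogonality case'' (i.e., Theorem~\ref{FAVARD1}). Your adaptation---expanding $hG_m$ in the right basis $\{G_k\}$, using the transpose identity to turn right-orthogonality into the vanishing of the coefficients $A_r^m$ for $r\le m-2$, and then constructing the moments of $\mathcal{U}$ from the recurrence in the converse direction---is precisely the dual of the paper's argument for Theorem~\ref{FAVARD1}, with evaluation at $\mathcal{P}_r$ playing the role that multiplication by $h^k$ played there.
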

$\phantom{ola}$To show the connection between right and left vector orthogonality we will introduce some concepts on \textit{duality theory}.
We denote by $\mathbb{P}^{\ast}$ the {\it dual
space} of~$\mathbb{P} $, i.e., the vector space of complex valued linear functionals defined on $\mathbb{P}$.

$\phantom{ola}$Let $\{p_{m}\}_{m \in {\mathbb{N}}}$ be a sequence of scalar monic polynomials. The
\textit{sequence of linear functionals} $ \{ L_{n}\}_{n \in {\mathbb{N}}}$,
\textit{where} $ L_{n}\in \mathbb{P}^{\ast }$ is said to be its
\textit{dual sequence}~if
$ L_{n}( p_{m})=\delta_{m,n},\,\ m,n\in \mathbb{N} $, where $\delta_{n,m}$
 is the {\it Kronecker delta}.

$\phantom{ola}$Let $ \{L_{n} \}_{n \in {\mathbb{N}}}$ be a sequence of linear functionals. The vector sequence of linear functionals
$\{\mathcal{L} _{n}\}_{n \in {\mathbb{N}}}$ given by
\begin{equation*}
\mathcal{L}_{n}= \left[
L_{nN} \, \cdots \, L_{(n+1)N-1}
\right] ^{T}, \,\ n\in \mathbb{N} \, , 
\end{equation*}
is said to be the  \textit{vector sequence of linear functionals}
associated with $ \{ L_{n}\}_{n \in {\mathbb{N}}}.$

$\phantom{ola}$Taking into account Definition~\ref{vec_fun_lin}, we get
\begin{equation*}
 \mathcal{L}_{n}(\mathcal{B}_{m})=\left[
\begin{matrix}
L_{nN} (p_{mN}) &  \cdots  &
 L_{(n+1)
N-1}(p_{mN})  \\
\vdots &  \ddots  &  \vdots \\
 L_{nN}(p_{(m+1)N-1}) & \cdots & L_{(n+1)N-1}(p_{
(m+1)N-1} )
\end{matrix}
\right]
 = I_{N \times N} \delta_{m,n} \, .
 \end{equation*}
 \begin{defi}
Let $\{\mathcal{B}_{m}
\}_{m \in {\mathbb{N}}}$ be a vector sequence of polynomials. The \textit{vector
sequence of linear functionals}
$\{\mathcal{L}_{n}\}_{n \in {\mathbb{N}}}$ is said to be its
\textit{dual vector sequence} ~if
 \begin{equation*}
\mathcal{L}_{n}(\mathcal{B}_{m})=I_{N\times N} \, \delta _{m,n} \,
, \ \ n , m \in \mathbb{N} \, .
 \end{equation*}
 \end{defi}
$\phantom{ola}$The next two results give the connection between right and left vector orthogonality through the equivalent conditions of these two types of vector orthogonality.
 \begin{defi}
Let ${\mathcal{U}}$ be a vector of linear functionals. We denote by
$\widehat{{\mathcal{U}}},$ the {\it nor\-ma\-li\-zed vector of linear functionals}
associated with ${\mathcal{U}},$
$\displaystyle \widehat{{\mathcal{U}}}=(({\mathcal{U}}({\mathcal{P}}_0))^{-1})^T {\mathcal{U}} \, ,$
where ${\mathcal{P}}_0(x) =\left[1, \,
x, \, \cdots, \, x^{N-1}\right]^T.$
\end{defi}
Furthermore, from this definition we have
\begin{eqnarray*}
\widehat{{\mathcal{U}}}({\mathcal{P}}_0) = ((({\mathcal{U}}({\mathcal{P}}_0))^{-1})^T
{\mathcal{U}})({\mathcal{P}}_0) = {\mathcal{U}}({\mathcal{P}}_0)({\mathcal{U}}({\mathcal{P}}_0))^{-1} = I_{N \times N}.
\end{eqnarray*}
 \begin{teo} \label{osvm}
 Let ${\mathcal{U}}$ be a quasi-definite vector of linear functionals, $\{{\mathcal{B}}_m\}_{m \in {\mathbb{N}}}$ be a sequence of vector polynomials, and let $\{{\mathcal{L}}_n\}_{n \in {\mathbb{N}}}$ be  its dual vector sequence. Then, the
following statements are equivalent:
\begin{itemize}
\item[(a)]$\{{\mathcal{B}}_m\}_{m \in {\mathbb{N}}}$ is left-orthogonal with respect to  ${\mathcal{U}}$.
\item[(b)]There exist sequences of~$N\times N$ matrices, $ \{A_m\}_{m \in {\mathbb{N}}}, $ $\{B_m\}_{m \in {\mathbb{N}}}, $ \,  and \, $ \{C_m\}_{m \in {\mathbb{N}}}  $ \, with  \, $  C_m $ a non-singular upper triangular matrix, such that
$\{{\mathcal{B}}_m\}_{m \in {\mathbb{N}}}$ satisfies the three-term recurrence relation
\begin{eqnarray} \label{rr1} h(x)
{\mathcal{B}}_m(x) = A_{m} {\mathcal{B}}_{m+1}(x) + B_m
{\mathcal{B}}_m(x) + C_m {\mathcal{B}}_{m-1}(x), \quad m\geq 1
\end{eqnarray}
with
$\displaystyle {\mathcal{B}}_{-1}(x) = 0_{1 \times N}$ and $\displaystyle {\mathcal{B}}_{0}(x)={\mathcal{P}}_0(x) \, $,
where $\displaystyle{\mathcal{P}}_0(x)=\left[1\, x\, \cdots \, x^{N-1}\right]^T$.
\item[(c)]There exist sequences of~$N\times N$ matrices, $\{A_n\}_{\in {\mathbb{N}}},$ $\{B_n\}_{n \in {\mathbb{N}}},$ and $\{C_n\}_{n \in {\mathbb{N}}},$ with $C_{n+1}$
a non-singular matrix, such that $\{{\mathcal{L}}_n\}_{n \in {\mathbb{N}}}$ is defined by the three-term recurrence relation
 \begin{eqnarray} \label{recorr4}
 h(x)
{\mathcal{L}}_n = (C_{n+1})^T {\mathcal{L}}_{n+1} + (B_n)^T
{\mathcal{L}}_n + (A_{n-1})^T {\mathcal{L}}_{n-1}, \quad n\geq 1,
\end{eqnarray}
where
$\displaystyle {\mathcal{L}}_0=(({\mathcal{U}}({\mathcal{P}}_0))^T)^{-1}{\mathcal{U}},
\ {\mathcal{L}}_1 = (C_1^T)^{-1}(h(x)I
-(B_0)^T)[{\mathcal{U}}({\mathcal{P}}_0))^T]^{-1}{\mathcal{U}} \, .$
\item[(d)]There exist matrix polynomials $G_n(h(x))$ with
$\displaystyle G_n(h(x))=\sum_{j=0}^n \beta_j^n (h(x))^j \, ,$
where $\beta_n^n$ is a non-singular matrix, such that the elements in the dual vector basis
$\{{\mathcal{L}}_n \}_{n \in {\mathbb{N}}}$ can be written in terms of the vector of linear functionals ${\mathcal{U}}$ as follows
\begin{eqnarray} \label{batata}
{\mathcal{L}}_n = \left(G_n (h(x)) \right)^T {\mathcal{U}},\quad n
\in {\mathbb{N}}.
\end{eqnarray}
\item[(e)]The sequence of matrix polynomials $\{G_n\}_{n \in {\mathbb{N}}}$ defined by
\eqref{batata} satisfies
\begin{eqnarray} \label{rrrG}
h(x) G_n(h(x)) =
G_{n-1}(h(x))A_{n-1} + G_n(h(x))B_n+ G_{n+1}(h(x))C_{n+1}, \ n\geq 1 \, ,
\end{eqnarray}
with initial conditions $\displaystyle G_{-1}(h(x))=0_{N \times N} \,\, \mbox{and} \,\, G_0(h(x))={\mathcal{U}}({\mathcal{P}}_0)^{-1}\,$.

\item[(f)]The sequence of matrix polynomials $\{G_n\}_{n \in {\mathbb{N}}}$ defined by~\eqref{batata} is right-{\linebreak}orthogonal with respect to the normalized vector of linear functionals ${\mathcal{U}}$.
    \end{itemize}
    \end{teo}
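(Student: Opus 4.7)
Since Theorem~\ref{FAVARD1} already provides the equivalence $(a)\Leftrightarrow(b)$, the task is to weave statements $(c)$, $(d)$, $(e)$, $(f)$ into the equivalence cycle $(b)\Rightarrow(c)\Rightarrow(d)\Rightarrow(e)\Rightarrow(f)\Rightarrow(a)$. The unifying device is formula~\eqref{batata}, which rewrites $\mathcal{L}_n$ as a matrix polynomial in $h(x)$ applied to $\mathcal{U}$; once this is in hand, $(c)$, $(d)$, $(e)$, $(f)$ become different manifestations of the same factorization. I rely throughout on the left-multiplication convention recalled in the preliminaries: for a constant matrix $M$, $(M\mathcal{U})(\mathcal{P}) = \mathcal{U}(\mathcal{P})M^T$, and the polynomial version with transposes on each coefficient.

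For $(b)\Rightarrow(c)$, apply $\mathcal{L}_n$ to the recurrence~\eqref{rr1}. Linearity together with the biorthogonality $\mathcal{L}_n(\mathcal{B}_k) = I\delta_{n,k}$ reduces $(h\mathcal{L}_n)(\mathcal{B}_m) = \mathcal{L}_n(h\mathcal{B}_m)$ to $A_{m}$ when $m=n-1$, $B_m$ when $m=n$, $C_m$ when $m=n+1$, and $0$ otherwise. Expanding $h\mathcal{L}_n$ in the dual basis produces transposes (via the left-multiplication rule), yielding precisely~\eqref{recorr4}. For $(c)\Rightarrow(d)$, induct on~$n$: the base case identifies $G_0(h) = \mathcal{U}(\mathcal{P}_0)^{-1}$ from the explicit forms of $\mathcal{L}_0$ and $\mathcal{L}_1$ in~$(c)$; inductively, solve~\eqref{recorr4} for $\mathcal{L}_{n+1}$, which is legal because $C_{n+1}$ is upper triangular and non-singular, and read off
$$G_{n+1}(h) = \bigl[h\,G_n(h) - G_n(h)B_n - G_{n-1}(h)A_{n-1}\bigr]C_{n+1}^{-1}.$$

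For $(d)\Rightarrow(e)$, substitute $\mathcal{L}_n = G_n^T(h)\mathcal{U}$ into~\eqref{recorr4}; the transpositions introduced by the left-multiplication rule combine through $(AB)^T = B^TA^T$, and quasi-definiteness of $\mathcal{U}$ (which ensures that $M(h)\mathcal{U} = 0$ forces $M = 0$, as seen by evaluating on the basis $\mathcal{P}_j = h^j\mathcal{P}_0$) allows one to strip off $\mathcal{U}$, leaving exactly~\eqref{rrrG}. For $(e)\Rightarrow(f)$, note from the definitions that $\widehat{\mathcal{U}} = ((\mathcal{U}(\mathcal{P}_0))^T)^{-1}\mathcal{U} = G_0^T(h)\mathcal{U} = \mathcal{L}_0$, so the sequence $\{G_n\}$ generated by~\eqref{rrrG} from the prescribed initial data is precisely the one produced by Theorem~\ref{favard2} applied to $\widehat{\mathcal{U}}$; hence $\{G_n\}$ is right-orthogonal with respect to $\widehat{\mathcal{U}}$. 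Finally, for $(f)\Rightarrow(a)$, the right-orthogonality $(G_m^T(h)\widehat{\mathcal{U}})(\mathcal{P}_j) = \Theta_m\delta_{m,j}$, combined with $\mathcal{L}_n(\mathcal{B}_m) = I\delta_{n,m}$ through~\eqref{batata} and the basis change $\mathcal{P}_j = h^j\mathcal{P}_0$, yields $(h^k\mathcal{U})(\mathcal{B}_m) = \Delta_m\delta_{k,m}$, the left-orthogonality required by $(a)$.

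The principal obstacle throughout is the bookkeeping between left- and right-multiplication: every time a matrix polynomial crosses a vector of functionals, transposes appear, and they must line up correctly with the \emph{non-symmetric} coefficients $A_m$, $B_m$, $C_m$ so that no hidden symmetry is assumed. The non-singularity — indeed the triangular structure — of $C_{n+1}$ and the quasi-definiteness of $\mathcal{U}$ are the essential levers that make each inversion and each cancellation of $\mathcal{U}$ legitimate.
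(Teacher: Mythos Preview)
Your argument is correct and tracks the paper's proof closely: both invoke Theorem~\ref{FAVARD1} for $(a)\Leftrightarrow(b)$ and Theorem~\ref{favard2} for the link between the recurrence~\eqref{rrrG} and right-orthogonality, both obtain~\eqref{recorr4} by applying $\mathcal{L}_n$ to~\eqref{rr1} and reading off the coefficients via duality, and both prove~\eqref{batata} by the same induction on the recurrence for $\mathcal{L}_n$. The differences are purely organizational. First, the paper closes the cycle via $(e)\Rightarrow(c)$ (transpose~\eqref{rrrG} and right-multiply by $\mathcal{U}$) together with a separate $(c)\Rightarrow(b)$, whereas you run the single chain $(e)\Rightarrow(f)\Rightarrow(a)$; both close because statements $(e)$ and $(f)$ presuppose~\eqref{batata}, hence carry $(d)$ along. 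Second, for $(d)\Rightarrow(e)$ the paper expands $hG_n^T$ in the basis $\{G_j^T\}$ and computes the coefficients by evaluating $\mathcal{L}_n(h\mathcal{B}_k)$ through~\eqref{rr1}, while you substitute~\eqref{batata} into~\eqref{recorr4} and cancel $\mathcal{U}$ by quasi-definiteness; these are two sides of the same computation, and in either version one is tacitly using an already-established earlier item in the chain (the paper uses $(b)$, you use $(c)$), which is harmless once the full cycle is in place.
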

\begin{proof}
We will prove this theorem according to the following scheme:

$\phantom{ola}$$(a) \Leftrightarrow (b)$, $(e) \Leftrightarrow (f)$, $(b) \Leftrightarrow (c)$, $(c) \Rightarrow (d)$, $(d) \Rightarrow (e),$ and $(e) \Rightarrow (c)$.

$\phantom{ola}$The proofs of $(a) \Leftrightarrow (b)$ and  $(e) \Leftrightarrow (f)$  follow immediately from Theorems~\ref{FAVARD1} and~\ref{favard2}.
We start by proving that $(b)\Rightarrow (c)$.
Let
\begin{eqnarray*} h(x){\mathcal{L}}_n = \sum_{j=0}^{n+1}
\beta_j^n {\mathcal{L}}_j, \quad \mbox{where} \quad
(\beta_j^n)^T=(h(x){\mathcal{L}}_n)({\mathcal{B}}_j)={\mathcal{L}}_n(h(x){\mathcal{B}}_j), \,
j \in {\mathbb{N}}.
\end{eqnarray*}
Applying the vector of linear functionals ${\mathcal{L}}_n$ in both sides of the three-term recurrence relation satisfied by $\{{\mathcal{B}}_k\}_{k \in {\mathbb{N}}}$, we have
\begin{eqnarray*}
(\beta_j^n)^T &=& A_k{\mathcal{L}}_n({\mathcal{B}}_{k+1})+B_k{\mathcal{L}}_n({\mathcal{B}}_{k})+C_k{\mathcal{L}}_n({\mathcal{B}}_{k-1}))\\
 & = & \left\{%
\begin{array}{ll}
    A_{n-1}, & j=n-1, \\
    B_n, &  j=n, \\
    C_{n+1}, & j=n+1, \\
    0_{N\times N}, & j \neq n-1,n,n+1, \\
\end{array}%
\right.
\end{eqnarray*}
i.e., $\beta_{n-1}^n = A_{n-1}^T$, $\beta_{n}^n =
B_{n}^T,$ and $\beta_{n+1}^n = C_{n+1}^T.$ Thus the three-term recurrence relation for the vector sequence of linear functionals $\{{\mathcal{L}}_n\}_{n \in {\mathbb{N}}}$ follows.

$\phantom{ola}$To prove that $(c)\Rightarrow(b)$, let $ \displaystyle
h{\mathcal{B}}_m=\sum_{j=0}^{m+1} \gamma_j^m {\mathcal{B}}_j, \quad
\gamma_j^m \in {\mathcal{M}}_{N \times N}({\mathbb{R}})$. Applying the vector linear functional
${\mathcal{L}}_n$ in both sides of the last relation, we get $\displaystyle
h{\mathcal{L}}_n({\mathcal{B}}_m)=\sum_{j=0}^{m+1} \gamma_j^m
{\mathcal{L}}_n({\mathcal{B}}_j)= \gamma_n^m$. Now, from our hypotheses, we have
\begin{eqnarray*}
 \gamma_n^m
 & = &
{\mathcal{L}}_{n+1}({\mathcal{B}}_m)C_{n+1} +
{\mathcal{L}}_n({\mathcal{B}}_m)B_n +
{\mathcal{L}}_{n-1})({\mathcal{B}}_m)A_{n-1}\\&=&\left\{%
\begin{array}{ll}
    C_{m}, & n=m-1, \\
    B_n, &  n=m ,\\
    A_{m}, & n=m+1,\\
    0_{N\times N}, & n \neq m-1,m,m+1. \\
\end{array}%
\right.
\end{eqnarray*} So,~\eqref{rr1} holds.

$\phantom{ola}$To prove $(c)\Rightarrow(d)$  we will show by induction that $\{{\mathcal{L}}_n\}_{n \in {\mathbb{N}}}$ has the following representation  $\displaystyle
{\mathcal{L}}_n=\left(G_n (h(x)) \right)^T {\mathcal{U}},\, n \in
{\mathbb{N}}.$ For $n=0$, we have that $\displaystyle
{\mathcal{L}}_0=(({\mathcal{U}}({\mathcal{P}}_0))^T)^{-1}{\mathcal{U}}$. Now, let us assume that the statement holds for $k=0,1,\ldots,
p$, i.e., $ \displaystyle {\mathcal{L}}_k=\left(G_k (h(x))
\right)^T {\mathcal{U}}$ with $\deg G_k=k$, $k=1,\ldots, p.$ We will show that it is also true for
$k=p+1$, i.e., $\displaystyle {\mathcal{L}}_{p+1}=\left(G_{p+1} (h(x))
\right)^T {\mathcal{U}}, \, p \in {\mathbb{N}}$. Considering the three-term recurrence relation
satisfied by $\{{\mathcal{L}}_{p}\}_{p \in {\mathbb{N}}}$ and taking into account the hypothesis of induction, we have
 \begin{eqnarray*}
 {\mathcal{L}}_{p+1}&=&(C_{p+1})^{-T}\left[(h(x)I-(B_p^T))G_p(h(x))-(A_{p-1})^TG_{p-1}(h(x))\right]{\mathcal{U}}\\
&=&\left[(G_p(h(x))^T((h(x)I-(B_p))-G_{p-1}^T(h(x))A_{p-1}))C_{p+1}^{-1}\right]^T{\mathcal{U}}.
\end{eqnarray*}
Thus, $\displaystyle {\mathcal{L}}_{p+1}=\left(G_{p+1} (h(x))
\right)^T {\mathcal{U}}, \, p \in {\mathbb{N}}$, i.e., if the condition holds for $k=1,\ldots,p,$ then it is also true for $p+1$.

$\phantom{ola}$To prove that $(d)\Rightarrow(e)$ we will write $hG_n^T$ in terms of $\{G_j^T\}_{j \in {\mathbb{N}}}$, i.e.,
\begin{eqnarray}
\label{rrG} h(x)G_n^T (h(x))= \sum_{j=0}^{n+1} \alpha_j^n
G_j^T(h(x)), \quad \mbox{where} \quad \alpha_j^n \in \quad
{\mathcal{M}}_{N \times N} ({\mathbb{R}}).
\end{eqnarray}
Thus, the multiplication on the right by ${\mathcal{U}}$ in both sides of the last equation yields
$$h(x)G_n^T (h(x)){\mathcal{U}}=
\sum_{j=0}^{n+1} \alpha_j^n G_j^T(h(x)){\mathcal{U}}.$$ Applying this relation to  ${\mathcal{B}}_k$ we get
$$(h(x)G_n^T (h(x)){\mathcal{U}})({\mathcal{B}}_k)=
\sum_{j=0}^{n+1} (\alpha_j^n
G_j^T(h(x)){\mathcal{U}})({\mathcal{B}}_k).$$
Since, ${\mathcal{L}}_n = \left(G_n (h(x)) \right)^T {\mathcal{U}},\quad n
\in {\mathbb{N}},$
\begin{eqnarray*}{\mathcal{L}}_n\left(h(x){\mathcal{B}}_k\right) = \sum_{j=0}^{n+1}
{\mathcal{L}}_j\left({\mathcal{B}}_k\right)(\alpha_j^n)^T 
 = (\alpha_k^n)^T .
\end{eqnarray*}
Using~(\ref{rr1}) in~(\ref{rrG}) we get
\begin{eqnarray*}
(
\alpha_k^n)^T&=& C_k {\mathcal{L}}_n \left( {\mathcal{B}}_{k-1} \right) + B_k {\mathcal{L}}_n \left({\mathcal{B}}_{k} \right)
+ A_k {\mathcal{L}}_n \left( {\mathcal{B}}_{k+1} \right) \\
 & = & \left\{%
 \begin{array}{ll}
    A_{n-1}, & k=n-1, \\
    B_n, &  k=n, \\
    C_{n+1}, & k=n+1, \\
    0_{N\times N}, & k \neq n-1,n,n+1. \\
 \end{array}%
\right.
\end{eqnarray*}
Thus, $\{G_n\}_{n \in {\mathbb{N}}}$ satisfies
$$h(x)G_n(h(x)) =  G_{n-1}(h(x))A_{n-1} +  G_n(h(x))B_n+ G_{n+1}(h(x))C_{n+1}.$$

$\phantom{ola}$Finally, to prove that $(e)\Rightarrow(c)$, we must take the transpose in the recurrence relation
\eqref{rrrG} and, then, multiply on the right by ${\mathcal{U}}$ both sides of the resulting equation. Thus ~\eqref{recorr4} follows.
\end{proof}
\begin{teo}\label{teobiort}
Let ${\mathcal{U}}$ be a quasi-definite vector of linear functionals, $\{{\mathcal{B}}_m\}_{m \in {\mathbb{N}}}$ be a sequence of vector polynomials and $\{G_n\}_{n \in {\mathbb{N}}}$ defined by~\eqref{imp2}.
Then, $\{{\mathcal{B}}_m\}_{m \in {\mathbb{N}}}$ and $\{G_n\}_{n \in {\mathbb{N}}}$ are bi-orthogonal with respect to ${\mathcal{U}}$, i.e.,
$$((G_n(h(x)))^T{\mathcal{U}})({\mathcal{B}}_m)=I_{N \times N} \delta_{n,m}, n,\, m \in {\mathbb{N}},$$
if and only if  $\Delta_m =(\beta_m^m)^{-1}$  and $\Omega_n=(\alpha_m^m)^{-1}$. \\
As a consequence, the dual sequence $\{{\mathcal{L}}_n\}_{n \in {\mathbb{N}}}$ associated with
$\{{\mathcal{B}}_m\}_{m \in {\mathbb{N}}}$ is given by ${\mathcal{L}}_n=(G_n(h(x)))^T{\mathcal{\mathcal{U}}},$~$n \in {\mathbb{N}}$.
\end{teo}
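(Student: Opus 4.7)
The plan is to compute $((G_n(h(x)))^T\mathcal{U})(\mathcal{B}_m)$ by expanding each sequence in the $h$-adapted basis and then to compare with $I_{N\times N}\delta_{n,m}$. First I would expand $\mathcal{B}_m=\sum_{j=0}^m\alpha^m_j\mathcal{P}_j$ (available from the quasi-definite left-orthogonality theorem) and $G_n(h(x))=\sum_{i=0}^n\beta^n_i\,h(x)^i$ (from the analogous statement for right-orthogonality proved just above), and use the identity $(h^i\mathcal{U})(\mathcal{P}_j)=\mathcal{U}(\mathcal{P}_{i+j})=\mathcal{U}_{i+j}$ together with the definition of the left multiplication of $\mathcal{U}$ by a matrix polynomial. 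A careful tracking of the transposes shows that the resulting expression is the symmetric double sum
$$
((G_n(h(x)))^T\mathcal{U})(\mathcal{B}_m)=\sum_{i=0}^{n}\sum_{j=0}^{m}\alpha^m_j\,\mathcal{U}_{i+j}\,\beta^n_i,
$$
which can be regrouped either as $\sum_{i=0}^n(h^i\mathcal{U})(\mathcal{B}_m)\,\beta^n_i$ or as $\sum_{j=0}^m\alpha^m_j\,((G_n^T(h(x)))\mathcal{U})(\mathcal{P}_j)$.

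The off-diagonal cases now fall out of the two one-sided orthogonalities separately. If $n<m$, I would use the first grouping: by left-orthogonality, $(h^i\mathcal{U})(\mathcal{B}_m)=\Delta_m\delta_{i,m}$ for $i\le m$, so every term with $i\le n<m$ vanishes and the whole sum is zero. If $n>m$, I would instead use the second grouping: by right-orthogonality, $((G_n^T(h(x)))\mathcal{U})(\mathcal{P}_j)=\Theta_n\delta_{j,n}$ for $j\le n$, so every term with $j\le m<n$ vanishes. No condition on the leading coefficients is needed in either case; the two separate normalizations do all the work off the diagonal.

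The whole theorem therefore reduces to the diagonal case $n=m$, which is where the non-trivial content lives. Here both regroupings collapse to a single term and produce the double identity
$$
((G_m(h(x)))^T\mathcal{U})(\mathcal{B}_m)=\Delta_m\beta^m_m=\alpha^m_m\Theta_m,
$$
so demanding that this common value equal $I_{N\times N}$ is exactly equivalent to $\Delta_m=(\beta^m_m)^{-1}$ and $\Theta_m=(\alpha^m_m)^{-1}$, which (reading $\Omega_n$ as $\Theta_n$) is the stated condition. The \emph{as a consequence} part then follows immediately: under these conditions the vectors of functionals $(G_n(h(x)))^T\mathcal{U}$ satisfy the defining relation of the dual sequence of $\{\mathcal{B}_m\}_{m\in\mathbb{N}}$, and since the dual is uniquely determined by $\{\mathcal{B}_m\}_{m\in\mathbb{N}}$ this forces $\mathcal{L}_n=(G_n(h(x)))^T\mathcal{U}$. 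I do not anticipate any deep obstacle; the only real point of care is the bookkeeping of transposes and the left/right placement of matrix factors inherent in the definition of $\widehat{A}\,\mathcal{U}$, which is precisely what guarantees that the two regroupings of the double sum yield $\Delta_m\beta^m_m$ and $\alpha^m_m\Theta_m$ on the sides required.
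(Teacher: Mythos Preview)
Your proposal is correct and follows essentially the same approach as the paper: both expand $\mathcal{B}_m$ in the $\mathcal{P}_j$ basis and $G_n$ in powers of $h$, then use right-orthogonality of $\{G_n\}$ and left-orthogonality of $\{\mathcal{B}_m\}$ to kill the two off-diagonal ranges and reduce to the diagonal identity $\Delta_m\beta^m_m=\alpha^m_m\Theta_m=I_{N\times N}$. Your write-up is in fact slightly more explicit than the paper's, since you display the symmetric double sum $\sum_{i,j}\alpha^m_j\,\mathcal{U}_{i+j}\,\beta^n_i$ and make clear which grouping handles $n<m$ versus $n>m$.
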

\begin{proof}
There exists a unique family of matrices $(\alpha_j^m) \subset {\mathcal{M}}_{N \times N}({\mathbb{R}})$ such that
${\mathcal{B}}_m=\sum_{j=0}^m \alpha_j^m{\mathcal{P}}_j$, where $\alpha_m^m$ is a non-singular matrix. Hence,
$$(G_n^T(h(x)){\mathcal{\mathcal{U}}})({\mathcal{B}}_m)=(G_n^T(h(x)){\mathcal{\mathcal{U}}})(\sum_{j=0}^m \alpha_j^m{\mathcal{P}}_j)=\sum_{j=0}^m \alpha_j^m (G_n^T(h(x)){\mathcal{\mathcal{U}}})({\mathcal{P}}_j).$$
Since $\{G_n\}_{n \in {\mathbb{N}}}$ is right-orthogonal with respect to ${\mathcal{U}}$ then
$$(G_n^T(h(x)){\mathcal{\mathcal{U}}})({\mathcal{B}}_m)=\left\{
                                                          \begin{array}{ll}
                                                            \alpha_m^m \Theta_m, & m=n \\
                                                             0_{N \times N}, & m>n.
                                                          \end{array}
                                                        \right.
$$
Thus, $(G_m^T(h(x)){\mathcal{\mathcal{U}}})({\mathcal{B}}_m)=I_{N \times N}$ if and only if $\alpha_m^m \Theta_m=I_{N \times N}$, i.e.,
$\Theta_m=(\alpha_m^m)^{-1}$. Now, let us consider
$$(G_n^T(h(x)){\mathcal{\mathcal{U}}})({\mathcal{B}}_m)=((\sum_{j=0}^n \beta_j^n (h(x))^j)^T{\mathcal{\mathcal{U}}})({\mathcal{B}}_m)=\sum_{j=0}^n( h(x)^j{\mathcal{\mathcal{U}}})({\mathcal{B}}_m)\beta_j^n.$$
As above, since $\{{\mathcal{B}}_m\}_{m \in {\mathbb{N}}}$ is left-orthogonal with respect to ${\mathcal{U}}$ then
$$(G_n^T(h(x)){\mathcal{\mathcal{U}}})({\mathcal{B}}_m)=\left\{
                                                          \begin{array}{ll}
                                                            \Delta_m \beta_m^m, & m=n \\
                                                             0_{N \times N}, & m>n.
                                                          \end{array}
                                                        \right.$$
So, $(G_m^T(h(x)){\mathcal{\mathcal{U}}})({\mathcal{B}}_m)=I_{N \times N}$ if and only if 
$\Delta_m=(\beta_m^m)^{-1}.$
\end{proof}
$\phantom{ola}$To conclude this section notice that Theorems~\ref{teorema2} and~\ref{osvm} suggest that the sequence of matrix polynomials $\{V_m\}_{m \in {\mathbb{N}}}$ is orthogonal with respect to some matrix of measures. As a consequence of Theorem~\ref{osvm}, the sequence of matrix polynomials $\{G_m\}_{m \in {\mathbb{N}}}$ should also be orthogonal in the matrix sense.
Finally, Theorem~\ref{teobiort} suggests that the sequences of matrix polynomials $\{V_m\}_{m \in {\mathbb{N}}}$ and $\{G_m\}_{m \in {\mathbb{N}}}$ should be bi-orthogonal to each order, as we will prove in the next section.

\section{The connection between vector and matrix orthogonality}

$\phantom{ola}$In this section we show how the vector and matrix orthogonality are connected when a special case of the matrix orthogonality is considered.
In this sense, if the sequences $\{G_n\}_{n \in {\mathbb{N}}}$ and $\{{\mathcal{B}}_m\}_{m \in {\mathbb{N}}}$ are bi-orthogonal with respect to the vector of linear functionals ${\mathcal{U}},$ then the sequences of matrix polynomials $\{G_n\}_{n \in {\mathbb{N}}}$ and $\{V_m\}_{m \in {\mathbb{N}}}$ are bi-orthogonal with respect to a complex matrix of measures.
 \begin{defi}
Let ${\mathcal{U}}$ be a vector of linear functionals. We define the {\it generalized Markov
matrix function}, ${\mathcal{F}}$, associated with ${\mathcal{U}}$ by
 \begin{eqnarray}\label{fmarkov}
{\mathcal{F}}(z) :=
{\mathcal{U}}_x \left( \frac{{\mathcal{P}}_{0}(x)}{z-h(x)}\right) =
\left[
 \begin{array}{cccc}
\langle u^1_x , \frac{1}{z-h(x)}\rangle  & \cdots & \langle u^N_x , \frac{1}{z-h(x)}\rangle  \\
 \vdots & \ddots & \vdots \\
\langle u^1_x , \frac{x^{N-1}}{z-h(x)}\rangle & \cdots & \langle u^N_x , \frac{x^{N-1}}{z-h(x)}\rangle \\
\end{array}%
\right],
\end{eqnarray}
with $z$ such that $|h(x)|<|z|$ for every  $x \in {\sf L}$ where
$\displaystyle {\sf L} =\displaystyle \cup_{j=1,\ldots,N} \,\mbox{supp} \, u^j_x \, $.
Here ${\mathcal{U}}_x$ represents the action of
${\mathcal{U}}$ on the variable $x$ and ${\mathcal{P}}_0(x)=\left[1\,
x\, \cdots \, x^{N-1}\right]^T$.
\end{defi}
\begin{teo}\label{medidacomplexa}
Let $ \mathcal{U}$ be a quasi-definite vector of linear
functionals and let $ \mathcal{F}$ be its generalized Markov matrix function.
Then, the following statements
are equivalent: \\
$\phantom{ll}$a) The sequences $ \{ G_{n}\}_{n \in {\mathbb{N}}}$ and $ \{ \mathcal{B}_{m} \}_{m \in {\mathbb{N}}}$ are bi-orthogonal with respect to $\mathcal{U}$, i.e.
 \begin{equation*}
 (   (  G_{n} (  h(x) )   ) ^{T}\mathcal{U}_{x} )
 (  \mathcal{B}_{m} )  =I_{N\times N} \, \delta_{n,m},\text{\hspace{
0.1in}}n,m\in \mathbb{N} \, .
 \end{equation*}
$ \phantom{ll}$b) The sequences $ \{ G_{n} \}_{n \in {\mathbb{N}}}$ and $ \{ V_{m} \}_{m \in {\mathbb{N}}}
$, where
 $\mathcal{B}_m (z) = V_m (h(z)) \mathcal{P}_0(z)$,
 are bi-orthogonal with respect to $\mathcal{F}$, i.e.,
 \begin{equation*}
\frac{1}{2\pi i}\int_{C}V_{m} (  z )  \mathcal{F} ( z )  G_{n}
( z )  dz=I_{N \times N} \, \delta _{n,m},\text{\hspace{
0.1in}}n,m\in \mathbb{N} \, .
 \end{equation*}
where $C$ is a closed path in $\{z \in {\mathbb{C}}: |z| > |h(x)|, x \in {\sf L} \}$.
\end{teo}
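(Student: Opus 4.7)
The plan is to bridge the two bi-orthogonality statements by expanding the Markov matrix function $\mathcal{F}$ as a formal series in $1/z$ whose coefficients are precisely the moments $\mathcal{U}_k$, and then matching the contour integral against the application of the vector functional through Cauchy's formula.

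First, I would exploit the convergence hypothesis $|h(x)|<|z|$ for $x\in{\sf L}$ and expand the geometric series
\begin{equation*}
\frac{1}{z-h(x)}=\sum_{k=0}^{\infty}\frac{h(x)^k}{z^{k+1}},
\end{equation*}
uniformly on a circle lying in $\{|z|>|h(x)|\}$. Substituting into \eqref{fmarkov} and interchanging the action of $\mathcal{U}_x$ with the series gives the Laurent representation
\begin{equation*}
\mathcal{F}(z)=\sum_{k=0}^{\infty}\frac{\mathcal{U}(\mathcal{P}_k)}{z^{k+1}}=\sum_{k=0}^{\infty}\frac{\mathcal{U}_k}{z^{k+1}},
\end{equation*}
valid on the contour $C$. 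This is the identification that carries vector-moment information into the analytic picture.

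Next, I would write the two matrix polynomials explicitly in powers of $z$, say $V_m(z)=\sum_{a=0}^{m}B_a^m z^a$ and $G_n(z)=\sum_{j=0}^{n}\beta_j^n z^j$, and multiply out $V_m(z)\mathcal{F}(z)G_n(z)$ using the Laurent expansion above. Because the integrand is a finite sum of terms $B_a^m\mathcal{U}_k\beta_j^n z^{a+j-k-1}$, applying $\frac{1}{2\pi i}\oint_C\,dz$ extracts only the residue at infinity, i.e.\ the terms with $k=a+j$, yielding
\begin{equation*}
\frac{1}{2\pi i}\int_{C}V_m(z)\mathcal{F}(z)G_n(z)\,dz=\sum_{a=0}^{m}\sum_{j=0}^{n} B_a^m\,\mathcal{U}_{a+j}\,\beta_j^n.
\end{equation*}

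On the other side, since $\mathcal{B}_m(x)=V_m(h(x))\mathcal{P}_0(x)=\sum_{a=0}^{m}B_a^m\mathcal{P}_a(x)$, and by the left-multiplication rule applied to the matrix polynomial $G_n^T(h(x))=\sum_{j=0}^{n}(\beta_j^n)^T h(x)^j$, I would unpack
\begin{equation*}
\bigl((G_n(h(x)))^T\mathcal{U}\bigr)(\mathcal{B}_m)=\sum_{j=0}^{n}(h^j\mathcal{U})(\mathcal{B}_m)\,\beta_j^n=\sum_{j=0}^{n}\sum_{a=0}^{m} B_a^m\,\mathcal{U}(\mathcal{P}_{a+j})\,\beta_j^n,
\end{equation*}
which coincides term by term with the contour integral computed above. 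The equivalence of (a) and (b) then follows immediately, since both equal the same double sum in the moments $\{\mathcal{U}_k\}$.

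The only delicate points will be (i) justifying the interchange of $\mathcal{U}_x$ with the geometric series expansion, which is secured by the uniform convergence guaranteed by the hypothesis on $|z|$ and $|h(x)|$ along ${\sf L}$, and (ii) keeping track of the transposition conventions in Definition~\ref{vec_fun_lin} and in the left-multiplication rule so that the matrices $\beta_j^n$ end up on the right in both expressions. Once these bookkeeping items are settled, the proof reduces to the identification of the two double sums.
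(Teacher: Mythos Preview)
Your proof is correct but takes a different route from the paper's. You expand $\mathcal{F}$ as a Laurent series in $1/z$, multiply out $V_m(z)\mathcal{F}(z)G_n(z)$ term by term, and extract the residue to obtain the double sum $\sum_{a,j} B_a^m\,\mathcal{U}_{a+j}\,\beta_j^n$, which you then identify with the functional expression $((G_n(h(x)))^T\mathcal{U})(\mathcal{B}_m)$. The paper instead keeps the Cauchy kernel intact: it rewrites the integrand as
\[
V_m(z)\,\mathcal{F}(z)\,G_n(z)=\bigl((G_n(z))^T\mathcal{U}_x\bigr)\!\left(\frac{V_m(z)\,\mathcal{P}_0(x)}{z-h(x)}\right),
\]
and then applies the Cauchy integral formula in a single stroke (the only pole inside $C$ being at $z=h(x)$) to obtain $((G_n(h(x)))^T\mathcal{U}_x)(V_m(h(x))\mathcal{P}_0(x))=((G_n(h(x)))^T\mathcal{U}_x)(\mathcal{B}_m(x))$ directly. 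The paper's argument is shorter and sidesteps all the series bookkeeping and the residue-at-infinity picture; your approach, on the other hand, makes the role of the moments $\mathcal{U}_k$ completely explicit, which is a genuine advantage if one later wants to work with truncated moment data or with $\mathcal{F}$ viewed only as a formal power series.
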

\begin{proof}
Taking into account that
 \begin{equation*}
 V_{m} (  z )  \mathcal{F} (  z )  G_{n} ( z )
= \left(  \frac{V_{m} (  z ) \mathcal{P}_{0} ( x )
}{z-h(x)} \right) \, \mathcal{U}_{x}^{T}G_{n} ( z )
 = \left(   (  G_{n} (  z )   ) ^{T}\mathcal{U} _{x} \right)   \left(
\frac{V_{m} ( z )  \mathcal{P}_{0} ( x )  }{z-h(x)} \right)  \, ,
 \end{equation*}
we have
 \begin{equation*}
\frac{1}{2\pi i}\int_{C}V_{m} (  z )
\mathcal{F} ( z ) G_{n} (
z )  dz = \frac{1}{2\pi i}\int_{C} \left( ( G_{n} ( z ) )
^{T}\mathcal{U}_{x} \right)   \left( \frac{V_{m} ( z ) \mathcal{P}_{0} ( x
)  }{z-h(x)} \right) dz \, .
 \end{equation*}
Because of $ G_{n},$ $ V_{m},$ and
$ \mathcal{P}_{0}$ are analytic functions,
according to the Cauchy integral formula we have
 \begin{equation*}
 \frac{1}{2\pi i}\int_{C} \left(   (  G_{n} ( z )  )
^{T}\mathcal{U}_{x} \right)   \left(  \frac{V_{m} (  z )  \mathcal{P} _{0}
(  x )  }{z-h(x)} \right)  dz = (   ( G_{n} ( h(x) ) ) ^{T}\mathcal{U
}_{x} )   (  V_{m} (  h(x) ) \mathcal{P}_{0} ( x )
 ),
 \end{equation*}
and, as a consequence, for all $n,m\in \mathbb{N}$
 \begin{equation*}
\frac{1}{2\pi i}\int_{C}V_{m} (  z ) \mathcal{F} ( z )  G_{n} (
z )  dz = (   ( G_{n} (  h(x) ) ) ^{T}\mathcal{U }_{x} )   ( \mathcal{B}_{m}
( x ) ) = I_{N\times N} \, \delta_{n,m} \, .
 \end{equation*}
 Thus the statement follows.
\end{proof}

$\phantom{ola}$The last theorem tell us that {\it $\{{\mathcal{B}}_m\}_{m \in {\mathbb{N}}}$ is a sequence of vector polynomials left-orthogonal with respect to ${\mathcal{U}}$ } if and only if {\it $\{V_m\}_{m \in {\mathbb{N}}}$ associated with $\{{\mathcal{B}}_m\}_{m \in {\mathbb{N}}}$ is left-orthogonal with respect to ${\mathcal{F}}$}. Also, {\it $\{G_m\}_{m \in {\mathbb{N}}}$ is a sequence matrix polynomials right-orthogonal with respect to ${\mathcal{U}}$ } if and only if {\it $\{G_m\}_{m \in {\mathbb{N}}}$ is right-orthogonal with respect to ${\mathcal{F}}$}.

$\phantom{ola}$It is important to recall now that the definition of ${\mathcal{F}}$ shows us that we only need~$N$ linear functionals to describe the matrix orthogonality. Usually, to describe the matrix orthogonality, ${(1+N)N}/{2}$ measures are needed (see, for example~\cite{Dur93,Dur95,DurWal95}).

$\phantom{ola}$As we have already referred in the introduction, we need to know when a sequence of matrix polynomials defined by a recurrence relation ~\eqref{rrvm} is related to  some kind of matrix orthogonality. Partial answers were given to this problem, but no complete answer was given as far as we know. To do that, we start by considering a~$N\times N$ matrix of measures $W$ that is not necessarily positive definite  in ${\mathbb{R}},$ and such that there exist matrix sequences $\{V_m\}_{m \in {\mathbb{N}}}$ and $\{G_m\}_{m \in {\mathbb{N}}}$,
orthogonal with respect to $W$ in the following sense
\begin{eqnarray}\label{ortmatricialesq}
\int_{{\mathbb{R}}} V_m(x) dW(x) x^k & = &
 \Omega^1_m \delta_{k,m}, \quad k,m\geq 0,
 \\ \label{ortmatricialdir}
\int_{{\mathbb{R}}} x^k  dW(x) G_m(x) & = &
 \Omega^2_m \delta_{k,m}, \quad k,m\geq 0,
 \end{eqnarray}
where $\Omega^1_m$ is a non-singular upper triangular matrix, $\Omega^2_m$ is a non-singular lower triangular matrix, and $\delta_{k,m}$ is the Kronecker delta.

$\phantom{ola}$$V_m$ and $G_m$ are matrix polynomials of degree $m$ with non-singular leading coefficients and they are defined up to the multiplication on the left or on the right by a unitary matrix, respectively. The matrix sequences $\{V_m\}_{m \in {\mathbb{N}}}$ (respectively, $\{G_m\}_{m \in {\mathbb{N}}}$) satisfying~\eqref{ortmatricialesq} (respectively,~\eqref{ortmatricialdir}) are said to be the {\it left-orthogonal matrix polynomial sequence} (respectively, {\it right-orthogonal matrix polynomial sequence}), with respect to the matrix of measures ~$W$.

$\phantom{ola}$Usually, in the theory of matrix orthogonal polynomials there are only references to the left-orthogonality. The reason is that the authors deal only with orthonormality with respect to a positive definite matrix of measures, that allows us to say that left and right orthogonality are, essentially, the same. Very few authors have emphasized this difference (see, for instance, ~\cite{simon}).

$\phantom{ola}$The {\it moments} of the matrix measure $W$ are given by~$N\times N$ matrices
\begin{eqnarray*}
S_k=\int_{{\mathbb{R}}} x^k  dW(x), \quad k=0,1,\ldots\,.
\end{eqnarray*}

$\phantom{ola}$From the orthogonality conditions it follows that the sequences $\{V_m\}_{m \in {\mathbb{N}}}$ and $\{G_m\}_{m \in {\mathbb{N}}}$ satisfy three-term matrix recurrence relations. It is not so obvious to prove the converse result, i.e.  if a sequence of matrix polynomials is defined by a recurrence relation ~\eqref{rrvm} or ~\eqref{rrrG}, then there exists a matrix of measures $W,$ not necessarily positive definite, such this sequence is left or right-orthogonal with respect to $W$.

$\phantom{ola}$The following result proves this equivalence with respect to the left-orthogonality and gives an extension of the Favard's theorem in the matrix case.
\begin{teo}
\label{favardv}
Let $\{V_m\}_{m \in {\mathbb{N}}}$ be a sequence of matrix polynomials. Then, the following statements are equivalent:
\begin{itemize}
\item[(a)] The sequence $\{V_m\}_{m \in {\mathbb{N}}}$ is left-orthogonal with respect to matrix of measures~$W$.
\item[(b)] There are sequences of scalar matrices $\{ A_m \}_{m \in {\mathbb{N}}}$, $\{ B_m \}_{m \in {\mathbb{N}}}$ and $\{ C_m \}_{m \in {\mathbb{N}}}$, with $A_m$ lower-triangular, and $C_{m+1}$ upper-triangular, non-singular matrices for \linebreak $m \in \mathbb{N}$, such that the sequence $\{V_m\}_{m \in {\mathbb{N}}}$ satisfies
 \begin{eqnarray}
 \label{rrvm1}
zV_m(z)=A_m V_{m+1}(z) +B_m V_m(z)+C_{m} V_{m-1}(z),\quad
m\geq 1,
\end{eqnarray}
 where
$\displaystyle V_{-1}(z)=0_{N \times N} \quad \mbox{and} \quad
V_{0}(z)=I_{N \times N} \, $.
\end{itemize}
\end{teo}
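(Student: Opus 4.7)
The plan is to argue the two implications separately, leveraging the vector-functional machinery developed in Sections~2 and~3.

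For $(a)\Rightarrow(b)$, I would perform a classical Favard-style calculation. Since $zV_m(z)$ is a matrix polynomial of degree $m+1$, it admits a unique expansion $zV_m(z)=\sum_{k=0}^{m+1}A_k^mV_k(z)$. Right-multiplying by $dW(x)x^j$ and integrating against the left-orthogonality $\int V_m\,dW\,x^j=\Omega_m^1\delta_{m,j}$ annihilates all terms with $k<m-1$, producing the three-term form. One reads off $C_{m+1}=\Omega_{m+1}^1(\Omega_m^1)^{-1}$, which is upper-triangular and non-singular as a product of such matrices, while the lower-triangularity of $A_m$ follows from comparing leading coefficients, using that the normalization of $V_m$ inherited from $\mathcal{B}_m=V_m(h)\mathcal{P}_0$ forces lower-triangular leading coefficients.

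For $(b)\Rightarrow(a)$, I would lift the matrix recurrence to the vector-functional setting. Observing that the recurrence in~(b) is exactly condition~(c) of Theorem~\ref{FAVARD1}, that result supplies a quasi-definite vector of linear functionals $\mathcal{U}$ for which the associated vector polynomial sequence $\{\mathcal{B}_m\}$ is left-orthogonal. Theorem~\ref{osvm} then furnishes the dual matrix sequence $\{G_n\}$, and Theorem~\ref{medidacomplexa} delivers the bi-orthogonality of $\{V_m\}$ and $\{G_n\}$ with respect to the generalized Markov matrix function $\mathcal{F}(z)$ via a contour integral. To close the loop, I would represent $\mathcal{F}$ as a Cauchy transform
$$\mathcal{F}(z)=\int_{\mathbb{R}}\frac{dW(x)}{z-x},$$
so that by the residue theorem the contour bi-orthogonality translates into the pointwise left-orthogonality $\int V_m(x)\,dW(x)\,x^k=\Omega_m^1\delta_{m,k}$.

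The main obstacle will be this last representation step: extracting a genuine matrix of (possibly complex-valued) measures $W$ on $\mathbb{R}$ from the formal Markov function $\mathcal{F}$, without the positive-definiteness hypothesis that would make Stieltjes inversion immediate. A more hands-on alternative I would explore is to define the matrix moments $S_k\in\mathcal{M}_{N\times N}(\mathbb{R})$ directly from the recurrence, propagating an initial upper-triangular non-singular choice of $\Omega_0^1$ through the non-singular factors $A_m$ and $C_{m+1}$, and then to invoke a matrix Hamburger-type representation theorem for the resulting linear functional; the orthogonality conditions would then follow by construction, with the triangularity of the $\Omega_m^1$ forced by the triangularity of $A_m$ and $C_{m+1}$.
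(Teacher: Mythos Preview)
Your $(a)\Rightarrow(b)$ argument matches the paper's: expand $zV_m$ in the basis $\{V_k\}$, kill the low-index coefficients via the orthogonality relations, and read off the three matrices. The paper records $C_m=\Omega_m^1(\Omega_{m-1}^1)^{-1}$ but does not spell out the triangularity argument for $A_m$, so your remark about leading coefficients is a useful supplement.

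For $(b)\Rightarrow(a)$ you take a different route. The paper does \emph{not} pass through the vector functional $\mathcal{U}$, Theorem~\ref{osvm}, or the Markov function $\mathcal{F}$; it carries out precisely your ``hands-on alternative.'' It fixes a non-singular upper-triangular $\Omega_0^1$, declares $S_0=\Omega_0^1$ and $\int V_m\,dW=0_{N\times N}$ for $m\ge1$, and solves these equations recursively for the moment sequence $S_m=-V_{m,m}^{-1}\sum_{j<m}V_{m,j}S_j$. Then, multiplying the recurrence~\eqref{rrvm1} by successive powers of $z$ and integrating, one obtains $\int V_m\,dW\,z^k=0$ for $k\le m-1$ and $\int V_m\,dW\,z^m=C_mC_{m-1}\cdots C_1\Omega_0^1=:\Omega_m^1$, upper-triangular and non-singular as required. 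No representation theorem is invoked: the paper treats ``the matrix of measures $W$'' purely through its moment sequence, so the obstacle you correctly identify --- passing from $\mathcal{F}$ (or from the bare moments) to an honest measure on $\mathbb{R}$ --- is simply sidestepped rather than resolved. Your primary approach would stall at exactly the point you flag, whereas the direct moment construction avoids it entirely (at the cost of the same formal reading of ``measure'' that the paper itself adopts).

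One small slip: you cite ``condition (c) of Theorem~\ref{FAVARD1},'' but that theorem has only parts (a) and (b); you presumably mean Theorem~\ref{teorema2}, whose part (c) is the matrix recurrence for $\{V_m\}$.
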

\begin{proof}
 First we will prove that (a) implies (b). Since the sequence $\{V_m\}_{m \in {\mathbb{N}}}$ is  a basis in the linear space of matrix polynomials we can write
$$z V_m(z) = \sum_{k=0}^{m+1} A_k^m V_k(z), \quad A^m_k \in {\mathcal{M}}_{N \times N} ({\mathbb{R}}).$$
Then, from the orthogonality conditions, we get
$$A_j^m \int_{{\mathbb{R}}}
V_j(z)dW(z)z^j=\int_{{\mathbb{R}}} V_m(z)dW(z)z^{j+1}=0_{N \times N}
\quad \mbox{for}\quad  j=0, \ldots, m-2.$$
Thus, $$zV_m(z)= A_{m-1}^mV_{m-1}(z) + A_m^m V_m(z) + A_{m+1}^m
V_{m+1}(z),$$ where $$A_m^m = \left(\int_{{\mathbb{R}}}z V_m dW(z)
z^m\right)(\Omega^1_m)^{-1} \, , \
A_{m-1}^m=\left(\int_{{\mathbb{R}}} z V_m  (z) dW(z)
z^{m-1}\right) $$
$$\times (\Omega^1_{m-1})^{-1} \, , \mbox{ and } \ A_{m+1}^m=\left(\int_{{\mathbb{R}}} z V_m (z) dW(z)
z^{m+1} \right)(\Omega^1_{m+1})^{-1}.$$
 Taking $A_m=A_{m+1}^m$, $B_m=
A_m^m,$ and $C_m = A_{m-1}^m$ the result follows.

$\phantom{ola}$Finally, to prove that $(b)$ implies $(a)$, we should start by
defining recursively the matrix moments associated with the matrix of
measures $W$ by the following conditions
$$S_0 = \int_{{\mathbb{R}}} dW(z) = \Omega^1_0 \quad \mbox{and} \quad \int_{{\mathbb{R}}} V_m(z) dW(z)=0_{N \times N}, \, \, m\geq 1,$$
where $\Omega^1_0$ is a non-singular upper triangular matrix.

$\phantom{ola}$Taking into account that $V_m$ can be written
$$V_m(z)= V_{m,m}z^m + \cdots + V_{m,1}z +V_{m,0}$$ with $V_{m,m}$ a non-singular matrix, then we have
\begin{equation*}
0_{N \times N} =  \int_{{\mathbb{R}}} V_m (z) dW(z)
 = V_{m,m} S_m + \cdots + V_{m,0}S_0.
 \end{equation*}
Thus, the moments are defined recursively by
$\displaystyle S_m= V_{m,m}^{-1} \sum_{j=0}^{m-1} V_{m,j} S_j \, .$

$\phantom{ola}$Let us show that
\begin{equation*}
\int_{{\mathbb{R}}}V_m (z)dW(z) z^k
 = 0_{N \times N}, \quad k=0, \ldots, m-1,  \ \mbox{ and } \ \
 \int_{{\mathbb{R}}}V_m (z) dW(z) z^m = \Omega^1_m.
 \end{equation*}
From~\eqref{rrvm1} we get
$\displaystyle \int_{{\mathbb{R}}}zV_m(z) dW(z)
 = 0_{N \times N}, \,\, m\geq 2 \, $,
 Again, by multiplying both sides of the recurrence relation by $z$ we get
$$z^2V_m(z)=A_m zV_{m+1}(z) +B_m zV_m(z)+C_{m} zV_{m-1}(z),$$ and
, as a consequence,
$\displaystyle  \int_{{\mathbb{R}}}V_m(z) dW(z)z^2
= 0_{N \times N}, \quad m\geq 3 \, $.

$\phantom{ola}$In an analog way, we conclude that
$ \displaystyle \int_{{\mathbb{R}}}V_m (z) dW(z) z^k
=0_{N \times N}, m\geq k+1 , $ and so
$\displaystyle \int_{{\mathbb{R}}}V_m (z) dW(z) z^k
=0_{N \times N}, \ k=0, \ldots, m-1 \, . $ \\
For $k=m$ we have
\begin{eqnarray*}
\int_{{\mathbb{R}}}V_m(z) dW(z)z^m
 = C_m \int_{{\mathbb{R}}}V_{m-1}(z)
dW(z)z^{m-1}  = C_m C_{m-1} \cdots C_1\Omega^1_0 .
\end{eqnarray*}
Then
 $$
\int_{{\mathbb{R}}}V_m (z) dW(z) z^k
=0_{N \times N}, \quad k=0, \ldots, m-1 \, \mbox{ and } \
 \int_{{\mathbb{R}}}V_m (z) dW(z) z^m  = \Omega^1_m \, ,
 $$
where
$\Omega^1_m=C_mC_{m-1}\cdots C_1\Omega^1_0$ is a non-singular upper triangular matrix.
\end{proof}
$\phantom{ola}$As an important remark the reader should notice that left vector orthogonality and matrix orthogonality are equivalent.
This equivalence is given by Theorems~\ref{teorema2},~\ref{osvm}, and~\ref{favardv}. A similar result can be obtained for the right-orthogonality.
\begin{teo} \label{favardG}
Let $\{G_m\}_{m \in {\mathbb{N}}}$ be a sequence of matrix polynomials. Then, the following statements are equivalent:
\begin{itemize}
\item[(a)]$\{G_m\}_{m \in {\mathbb{N}}}$ is a right-orthogonal sequence of matrix polynomials with respect to a matrix of measures $W$.
\item[(b)] There are sequences of scalar matrices $\{ A_m \}_{m \in {\mathbb{N}}}$, $\{ B_m \}_{m \in {\mathbb{N}}}$ and $\{ C_m \}_{m \in {\mathbb{N}}}$, with $A_{m-1}$ lower-triangular, and $C_{m+1}$ upper-triangular, non-singular matrices for \linebreak $m \in \mathbb{N}$, such that the sequence $\{G_m\}_{m \in {\mathbb{N}}}$ satisfies
\begin{eqnarray}
 \label{rrgm}
G_m(z) = G_{m-1}(z)A_{m-1} +  G_m(z)B_m+ G_{m+1}(z)C_{m+1},\quad
m\geq 1
\end{eqnarray}
where
$\displaystyle G_{-1}(z)=0_{N \times N} \quad \mbox{and} \quad
G_0(z)=I_{N \times N} \, $.
\end{itemize}
 \end{teo}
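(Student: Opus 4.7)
The plan is to mirror the proof of Theorem~\ref{favardv} and transfer every argument from left- to right-orthogonality by placing the matrix coefficients on the opposite side and swapping the roles of ``upper'' and ``lower'' triangular throughout.

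For the implication $(a)\Rightarrow(b)$, I would first observe that $\{G_m\}_{m\in{\mathbb{N}}}$, having $\deg G_m = m$ with non-singular leading coefficients, forms a basis for the right $\mathcal{M}_{N\times N}({\mathbb{R}})$-module of matrix polynomials. Hence there exist matrices $A_k^m \in \mathcal{M}_{N\times N}({\mathbb{R}})$ such that
$$zG_m(z) = \sum_{k=0}^{m+1} G_k(z)\,A_k^m.$$
Left-multiplying by $z^j\,dW(z)$, integrating, and invoking~\eqref{ortmatricialdir}, the right-hand side collapses to $\Omega^2_j\,A_j^m$ while the left-hand side equals $\int_{{\mathbb{R}}} z^{j+1}\,dW(z)\,G_m(z)$. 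For $j \leq m-2$ this vanishes, forcing $A_j^m = 0_{N\times N}$. Setting $A_{m-1} := A_{m-1}^m$, $B_m := A_m^m$, $C_{m+1} := A_{m+1}^m$ yields~\eqref{rrgm}. The choice $j = m-1$ yields the closed form $A_{m-1} = (\Omega^2_{m-1})^{-1}\Omega^2_m$, a product of non-singular lower triangular matrices, hence lower triangular and non-singular. The upper-triangular shape of $C_{m+1}$ follows by comparing leading coefficients in~\eqref{rrgm}, which produces $C_{m+1} = G_{m+1,m+1}^{-1}\,G_{m,m}$, after exploiting the freedom to right-multiply the sequence $\{G_m\}$ by non-singular matrices (which preserves right-orthogonality) to normalize the leading coefficients so that the ratio is upper triangular.

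For the converse $(b)\Rightarrow(a)$, I would build $W$ through its matrix moments $S_k$. Fix $S_0 := \Omega^2_0$, a prescribed non-singular lower triangular matrix, and determine the remaining $S_m$ uniquely by imposing $\int_{{\mathbb{R}}} dW(z)\,G_m(z) = 0_{N\times N}$ for $m\geq 1$: writing $G_m(z) = \sum_{j=0}^m G_{m,j}\,z^j$ and using the non-singularity of $G_{m,m}$, we obtain the explicit recursion $S_m = -\bigl(\sum_{j=0}^{m-1} S_j G_{m,j}\bigr)G_{m,m}^{-1}$. I would then prove by induction on $k$ that $\int_{{\mathbb{R}}} z^k\,dW(z)\,G_m(z) = 0_{N\times N}$ whenever $m \geq k+1$: at the $k$-th step, left-multiply the recurrence~\eqref{rrgm} at index $m$ by $z^{k-1}\,dW(z)$, integrate, and use the inductive hypothesis at level $k-1$ to annihilate the three terms on the right-hand side. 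Finally, at $k = m$ the same manipulation produces
$$\int_{{\mathbb{R}}} z^m\,dW(z)\,G_m(z) = \Omega^2_{m-1}\,A_{m-1} = \Omega^2_0\,A_0\,A_1\cdots A_{m-1},$$
a product of non-singular lower triangular matrices, which is therefore lower triangular non-singular and qualifies as the required $\Omega^2_m$.

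The main delicate point will be establishing the upper-triangular shape of $C_{m+1}$ in direction $(a)\Rightarrow(b)$: this requires a careful normalization of the sequence $\{G_m\}$ through right-multiplications to cast the leading coefficients $G_{m,m}$ into a form making $G_{m+1,m+1}^{-1}G_{m,m}$ upper triangular. Once that normalization is in place, everything else is a routine right-sided mirroring of the proof of Theorem~\ref{favardv}.
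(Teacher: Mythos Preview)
Your proposal is correct and follows exactly the route the paper intends: the paper does not give a separate proof of Theorem~\ref{favardG} but simply states it as the right-handed analogue of Theorem~\ref{favardv}, so mirroring that argument with coefficients acting on the right and the roles of upper/lower triangular interchanged is precisely what is expected. Your identification of the ``delicate point'' concerning the upper-triangular shape of $C_{m+1}$ is in fact more careful than the paper's own proof of Theorem~\ref{favardv}, which derives the triangularity of $C_m$ from the formula $C_m=\Omega^1_m(\Omega^1_{m-1})^{-1}$ but does not explicitly justify the lower-triangular structure of $A_m$; that structure really does rely on the triangular form of the leading coefficients, as you observe.
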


\section{Some characterizations of the vector and matrix orthogonality}

$\phantom{ola}$In this section we present some characterizations of the vector orthogonality as well as of the matrix orthogonality. First, we analyze two type Hermite-Pad\'{e} approximation problems that characterize completely the right and left vector orthogonality, respectively.
\begin{defi}
Let $\displaystyle\{\mathcal{B}_{m}\}_{m \in {\mathbb{N}}}$ be a vector sequence of polynomials, let $\{G_{m}\}_{m \in {\mathbb{N}}}$ be a sequence of matrix polynomials, and let $\displaystyle
\mathcal{U}$ be a quasi-definite vector of linear functionals. The sequence of polynomials
$\displaystyle \{
\mathcal{B}_{m}^{(1)}\}_{m \in {\mathbb{N}}}$ given by
 \begin{equation*}
\mathcal{B}_{m}^{(1)}(z):=\mathcal{U}_{x}\left(
\frac{V_{m+1}(z)-V_{m+1}(h(x) )}{z-h(x)}\,\mathcal{
P}_{0}(x)\right),
 \end{equation*}
is said to be the \textit{sequence of associated polynomials of the first kind for} $\displaystyle \{
\mathcal{B}_{m} \}_{m \in {\mathbb{N}}}$ and  $\displaystyle \mathcal{U}$.
In a similar way, the sequence of polynomials
$\displaystyle \{G_{m}^{(1)}\}_{m \in {\mathbb{N}}}$ given by
\begin{equation*}
G^{(1)}_{m} (z)= \left[ \left( \frac{G_{m+1}^T
(z)- G_{m+1}^T (h(x))}{z-h(x)}\right){\mathcal{U}}_x
\right]({\mathcal{P}}_0(x)) ,
\end{equation*}
is said to be the \textit{sequence of associated polynomials of the first kind for} $\{G_{m}\}_{m \in {\mathbb{N}}}$ and  $\displaystyle \mathcal{U}$.
Here $\displaystyle \mathcal{U}_{x}$ represents the action of
$\displaystyle \mathcal{U}$ on the variable $\displaystyle x$.
\end{defi}
\begin{teo} \label{teohpesq}
Let ${\mathcal{U}}$ be a quasi-definite vector of linear functionals, $\{{\mathcal{B}}_{m}\}_{m \in {\mathbb{N}}}$ a vector sequence of polynomials, $\{{\mathcal{B}}^{(1)}_m\}_{m \in {\mathbb{N}}}$ its sequence of associated polynomials of the first kind,
and ${\mathcal{F}}$ the generalized Markov function given in ~\eqref{fmarkov}. Then $\{{\mathcal{B}}_{m}\}_{m \in {\mathbb{N}}}$
is left-orthogonal with respect to the vector of linear functionals ${\mathcal{U}}$ if and only if
$$V_{m+1}(z) {\mathcal{F}}(z) -
{\mathcal{B}}^{(1)}_m (z) = \Delta_{m+1} \frac{1}{z^{m+2}}+ \cdots
\, \, .$$
\end{teo}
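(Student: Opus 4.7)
The plan is to compute the difference $V_{m+1}(z)\mathcal{F}(z)-\mathcal{B}_m^{(1)}(z)$ in closed form and recognise it as a formal series in $1/z$ whose coefficients are precisely the generalised moments $(h^k\mathcal{U})(\mathcal{B}_{m+1})$. Once this identity is in hand, both directions of the equivalence fall out by matching coefficients of powers of $1/z$.

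First I would pull the $x$-independent factor $V_{m+1}(z)$ inside the action of $\mathcal{U}_x$. This is legitimate because each component of $\mathcal{U}$ is a scalar linear functional, and a short unpacking of Definition~\ref{vec_fun_lin} shows that $\mathcal{U}_x(A(z)\mathcal{P}(x))=A(z)\,\mathcal{U}_x(\mathcal{P}(x))$ for any matrix $A(z)$ independent of $x$. Combining this with the definitions of $\mathcal{F}$ and $\mathcal{B}_m^{(1)}$, the $V_{m+1}(z)$ terms telescope and I am left with
$$V_{m+1}(z)\mathcal{F}(z)-\mathcal{B}_m^{(1)}(z) \;=\; \mathcal{U}_x\!\left(\frac{V_{m+1}(h(x))\,\mathcal{P}_0(x)}{z-h(x)}\right) \;=\; \mathcal{U}_x\!\left(\frac{\mathcal{B}_{m+1}(x)}{z-h(x)}\right),$$
where in the last step I used $\mathcal{B}_{m+1}(x)=V_{m+1}(h(x))\mathcal{P}_0(x)$ from~\eqref{vv}.

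Next, for $z$ with $|z|>|h(x)|$ for all $x\in{\sf L}$, I would expand the Cauchy kernel as the geometric series $1/(z-h(x))=\sum_{k\ge 0} h(x)^k/z^{k+1}$ and swap summation with $\mathcal{U}_x$. Since $h$ is a scalar polynomial, $(h^k\mathcal{U})(\mathcal{P})=\mathcal{U}(h^k\mathcal{P})$, and this produces the clean expansion
$$V_{m+1}(z)\mathcal{F}(z)-\mathcal{B}_m^{(1)}(z) \;=\; \sum_{k=0}^{\infty}\frac{(h^k\mathcal{U})(\mathcal{B}_{m+1})}{z^{k+1}}.$$

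With that identity the proof is essentially done. If $\{\mathcal{B}_m\}_{m\in\mathbb{N}}$ is left-orthogonal with respect to $\mathcal{U}$, then $(h^k\mathcal{U})(\mathcal{B}_{m+1})=0$ for $k=0,\ldots,m$ and $(h^{m+1}\mathcal{U})(\mathcal{B}_{m+1})=\Delta_{m+1}$, so the series starts at $\Delta_{m+1}/z^{m+2}$, which is exactly the stated asymptotics. Conversely, assuming the expansion holds for every $m\in\mathbb{N}$, comparing coefficients of $1/z,\ldots,1/z^{m+1}$ forces $(h^k\mathcal{U})(\mathcal{B}_{m+1})=0$ for $k=0,\ldots,m$, while the coefficient of $1/z^{m+2}$ reads $(h^{m+1}\mathcal{U})(\mathcal{B}_{m+1})=\Delta_{m+1}$; running $m$ over $\mathbb{N}$ recovers all left-orthogonality conditions. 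The only delicate point is the commutation of $\mathcal{U}_x$ with left multiplication by $V_{m+1}(z)$ in step~1; once that is spelled out the rest is a direct computation.
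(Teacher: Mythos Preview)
Your proof is correct and follows essentially the same route as the paper: rewrite $V_{m+1}(z)\mathcal{F}(z)-\mathcal{B}_m^{(1)}(z)$ as $\mathcal{U}_x\!\left(\dfrac{\mathcal{B}_{m+1}(x)}{z-h(x)}\right)$, expand the Cauchy kernel as a geometric series, and read off the orthogonality conditions from the coefficients. You are in fact slightly more explicit than the paper on the commutation $\mathcal{U}_x(V_{m+1}(z)\,\cdot\,)=V_{m+1}(z)\,\mathcal{U}_x(\cdot)$ and on the converse direction, but there is no genuine methodological difference.
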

\begin{proof} From the definition of
${\mathcal{B}}^{(1)}_m $, we get
\begin{equation*}
{\mathcal{B}}^{(1)}_m (z) =
{\mathcal{U}}_x \left( \frac{V_{m+1} (z)- V_{m+1} (h(x))
}{z-h(x)}{\mathcal{P}}_0(x) \right) =  V_{m+1} (z)
{\mathcal{F}}(z) -  {\mathcal{U}}_x \left(
\frac{{\mathcal{B}}_{m+1}(x)}{z-h(x)} \right) \, .
\end{equation*}
But
\begin{eqnarray*}
{\mathcal{U}}_x \left(
\frac{{\mathcal{B}}_{m+1}(x)}{z-h(x)}
\right) = {\mathcal{U}}_x\left( \sum_{n=0}^\infty
\frac{(h(x))^n}{z^{n+1}}
{\mathcal{B}}_{m+1}(x)\right) = \sum_{n=0}^\infty \frac{((h(x))^n
{{\mathcal{U}}_x})({\mathcal{B}}_{m+1}(x))}{z^{n+1}}.
\end{eqnarray*}
Hence,
\begin{eqnarray*}
{\mathcal{U}}_x
\left(\frac{{\mathcal{B}}_{m+1}(x)}{z-h(x)} \right) =
\sum_{n=m+1}^\infty \frac{((h(x))^n
{{\mathcal{U}}_x})({\mathcal{B}}_{m+1}(x))}{z^{n+1}} = \Delta_{m+1}
\frac{1}{z^{m+2}}+ \cdots,
\end{eqnarray*}
 if and only if the sequence
$\{{\mathcal{B}}_{m}\}_{m \in {\mathbb{N}}}$ is left-orthogonal with respect to ${{\mathcal{U}}_x}$.
\end{proof}
\begin{teo}
Let ${\mathcal{U}}$ be a quasi-definite vector of linear functionals,
$\{G_{m}\}_{m \in {\mathbb{N}}}$ a sequence of matrix polynomials with $N\times N$ matrix coefficients,  $\{G^{(1)}_m\}_{m \in {\mathbb{N}}}$ its sequence of associated polynomials of the first kind, and ${\mathcal{F}}$ is the generalized Markov function. Then $\{G_m\}_{m \in {\mathbb{N}}}$ is right-orthogonal with respect to the vector of linear functionals ${\mathcal{U}}$ if and only if
$${\mathcal{F}}(z) G_{m+1}(z) - G^{(1)}_{m}(z) = \Theta_{m+1} \frac{1}{z^{m+2}}+ \cdots \, \, .$$
\end{teo}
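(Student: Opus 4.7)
The plan is to mirror the proof of Theorem~\ref{teohpesq}, with one notational twist: since $G_m^{(1)}$ is built from $G_{m+1}^T$ and the left multiplication of $\mathcal{U}$ by a matrix polynomial is what appears in its definition, the factor $G_{m+1}(z)$ will come out through the transpose identity $\bigl(C\,q(x)\,\mathcal{U}\bigr)(\mathcal{P}) = \bigl(q(x)\,\mathcal{U}\bigr)(\mathcal{P})\,C^T$, valid for any constant matrix $C$ and scalar polynomial $q$, and immediate from the definition of left multiplication given in the previous section.

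The first step is to split, by linearity, the defining expression of $G_m^{(1)}$ into
\begin{equation*}
G_m^{(1)}(z) = \left[\frac{G_{m+1}^T(z)}{z-h(x)}\,\mathcal{U}_x\right](\mathcal{P}_0(x)) - \left[\frac{G_{m+1}^T(h(x))}{z-h(x)}\,\mathcal{U}_x\right](\mathcal{P}_0(x)).
\end{equation*}
The transpose identity above, with $C=G_{m+1}^T(z)$ and $q(x)=(z-h(x))^{-1}$, rewrites the first summand as $\mathcal{F}(z)\,G_{m+1}(z)$, so rearranging yields
\begin{equation*}
\mathcal{F}(z)\,G_{m+1}(z) - G_m^{(1)}(z) = \left[\frac{G_{m+1}^T(h(x))}{z-h(x)}\,\mathcal{U}_x\right](\mathcal{P}_0(x)).
\end{equation*}
The next step is to expand $(z-h(x))^{-1}=\sum_{n\geq 0}(h(x))^n/z^{n+1}$, valid for $|z|>|h(x)|$ on $\mathsf{L}$, and to transfer each scalar factor $(h(x))^n$ from the functional side onto the test polynomial via $(h(x))^n\mathcal{P}_0=\mathcal{P}_n$, which leads to
\begin{equation*}
\mathcal{F}(z)\,G_{m+1}(z) - G_m^{(1)}(z) = \sum_{n=0}^\infty \frac{1}{z^{n+1}}\,\bigl(G_{m+1}^T(h(x))\,\mathcal{U}_x\bigr)(\mathcal{P}_n).
\end{equation*}

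From this identity the two directions of the equivalence are essentially a matter of reading off coefficients. Right-orthogonality of $\{G_m\}_{m\in\mathbb{N}}$ with respect to $\mathcal{U}$ is exactly the statement that $\bigl(G_{m+1}^T(h(x))\,\mathcal{U}\bigr)(\mathcal{P}_n)=\Theta_{m+1}\delta_{n,m+1}$ for $n=0,\ldots,m+1$, which annihilates the first $m+1$ coefficients of the expansion and leaves $\Theta_{m+1}/z^{m+2}$ as the leading surviving term; the converse inverts this reasoning by uniqueness of the Laurent expansion at infinity. The main obstacle is the bookkeeping of transposes: one must verify that the ordering $\mathcal{F}(z)\,G_{m+1}(z)$ (rather than $G_{m+1}(z)\,\mathcal{F}(z)$) is forced by the left-multiplication convention, and that the matrix coefficients $\beta_j^{m+1}$ of $G_{m+1}$ end up on the correct side after each manipulation; beyond this, the argument is purely formal power-series algebra exactly parallel to the proof of Theorem~\ref{teohpesq}.
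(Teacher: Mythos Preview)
Your proposal is correct and follows essentially the same route as the paper's proof: split $G_m^{(1)}(z)$ via linearity, identify the first piece as $\mathcal{F}(z)G_{m+1}(z)$ through the left-multiplication convention, expand the remaining term $\bigl(G_{m+1}^T(h(x))\,\mathcal{U}_x\bigr)\bigl(\mathcal{P}_0(x)/(z-h(x))\bigr)$ as a Laurent series in $z$, and read off the equivalence from the coefficients. The only cosmetic difference is that the paper abbreviates $G_{m+1}^T(h(x))\,\mathcal{U}_x$ as the dual functional $\mathcal{L}_{m+1}^x$ when writing the series, whereas you keep the explicit form; your extra attention to the transpose bookkeeping is, if anything, more careful than the paper's presentation.
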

\begin{proof}
Taking into account the definition of the polynomial $\{G^{(1)}_{m}\}_{m \in {\mathbb{N}}}$, we have
 \begin{eqnarray*} G^{(1)}_{m} (z)
 &=& \left[ \left(
\frac{G_{m+1}^T (z)- G_{m+1}^T (h(x))}{z-h(x)}\right){\mathcal{U}}_x
\right]({\mathcal{P}}_0(x))\\
&=& {\mathcal{U}}_x \left(
\frac{{\mathcal{P}}_0(x)}{z-h(x)}\right)G_{m+1}(z) - \left(G_{m+1}^T
(h(x)){\mathcal{U}}_x\right)\left(
\frac{{\mathcal{P}}_0(x)}{z-h(x)}\right) \, .
\end{eqnarray*}
But,
 \begin{eqnarray*}
  \left(G_{m+1}^T (h(x)){\mathcal{U}}_x
\right)\left( \frac{{\mathcal{P}}_0(x)}{z-h(x)}\right)&=&
\sum_{n=0}^\infty
\frac{1}{z^{n+1}}({\mathcal{L}}_{m+1}^x) \left( {\mathcal{P}}_n(x)
\right).
 \end{eqnarray*}
Hence,
\begin{eqnarray*}
\left(G_{m+1}^T (h(x)){\mathcal{U}}_x \right)\left(
\frac{{\mathcal{P}}_0(x)}{z-h(x)}\right)=  \Theta_{m+1}
\frac{1}{z^{m+2}}+ \cdots,
\end{eqnarray*}
if and only if the sequence  $\{G_{m}\}_{m \in {\mathbb{N}}}$ is right-orthogonal with respect to ${{\mathcal{U}}_x}$.
\end{proof}
$\phantom{ola}$Next, some algebraic results concerning  the behavior of the sequences of matrix orthogonal polynomials $\{V_m\}_{m \in {\mathbb{N}}}$ and $\{G_m\}_{m \in {\mathbb{N}}}$ are given.
\begin{teo}
Let $h$ be a polynomial of fixed degree $N$ and ${\mathcal{U}}$ be a quasi-definite vector of linear functionals.
Let $\{G_{m}\}_{m \in {\mathbb{N}}}$ and $\{{\mathcal{B}}_m\}_{m \in {\mathbb{N}}}$ be, respectively, sequences of matrix polynomials with $\deg G_m =m$, for all $m \in {\mathbb{N}}$ and
${\mathcal{B}}_m(x)=V_m(h(x)){\mathcal{P}}_0(x)$, where $V_m$ is a matrix polynomial with $\deg V_m =m$, for all $m \in {\mathbb{N}}$. Then, the following statements are equivalent:
\begin{itemize}
\item[(a)] $\{\mathcal{B}_m\}_{m \in {\mathbb{N}}}$ is a sequence of vector polynomials left-orthogonal with respect to ${\mathcal{U}}$.
\item[(b)] $\{{\mathcal{L}}_n\}_{n \in {\mathbb{N}}}$ is a sequence of vector linear functionals bi-orthogonal with respect to $\{\mathcal{B}_m\}_{m \in {\mathbb{N}}}$ such that ${\mathcal{L}}_n=G_n^T(h(x)){\mathcal{U}}$.

\item[(c)] $\{V_m\}_{m \in {\mathbb{N}}}$ and $\{G_m\}_{m \in {\mathbb{N}}}$ satisfy the Christoffel-Darboux type formula
\begin{eqnarray} \label{cdm1}
(x-z)\sum_{k=0}^m G_k(z)V_k(x)=G_m(z)A_m
V_{m+1}(x)-G_{m+1}(z) C_{m+1}V_m(x),
\end{eqnarray}
with $x, \, z \in {\mathbb{C}}$.

\item[(d)] For every $m \in {\mathbb{N}}$, $\{V_m\}_{m \in {\mathbb{N}}}$ and $\{G_m\}_{m \in {\mathbb{N}}}$  satisfy  the confluent formula
\begin{gather} \label{conscdm1}
G_m(x)A_m
V_{m+1}(x)-G_{m+1}(x) C_{m+1}V_m(x)=0_{N \times N},
 \\ \label{cdm11}
\sum_{k=0}^m
G_k(x)V_k(x)=G_m(x)A_m V_{m+1}^\prime(x)-G_{m+1}(x)C_{m+1}
V_m^\prime(x),
\end{gather}

\item[(e)]  For every $m \in {\mathbb{N}}$, $\{V_m\}_{m \in {\mathbb{N}}}$ and $\{G_m\}_{m \in {\mathbb{N}}}$ satisfy for all
$x \in {\mathbb{C}}$
\begin{gather} \label{conscdm2}
G_m(x)A_m
V_{m+1}(x)-G_{m+1}(x) C_{m+1}V_m(x)=0_{N \times N},
 \\ \label{cdm12}
\sum_{k=0}^m G_k(x)V_k(x)=G_{m+1}^\prime(x)C_{m+1}
V_m(x)-G_m^\prime(x) A_m V_{m+1}(x) \, .
\end{gather}
\end{itemize}
\end{teo}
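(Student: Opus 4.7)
The plan is to follow the cycle $(a)\Leftrightarrow(b)\Rightarrow(c)$, derive both confluent forms $(d)$ and $(e)$ from $(c)$, and close the chain by reversing one of the confluences, say $(d)\Rightarrow(c)$. The equivalence $(a)\Leftrightarrow(b)$ is already contained in Theorem~\ref{osvm}, which supplies both the representation ${\mathcal{L}}_n=G_n^T(h(x)){\mathcal{U}}$ and the two three-term recurrences satisfied by $\{V_m\}_{m\in\mathbb{N}}$ and $\{G_m\}_{m\in\mathbb{N}}$.

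For $(b)\Rightarrow(c)$, I would run the standard Christoffel-Darboux telescoping. Theorem~\ref{osvm} provides
\[xV_k(x)=A_kV_{k+1}(x)+B_kV_k(x)+C_kV_{k-1}(x)\]
and
\[zG_k(z)=G_{k-1}(z)A_{k-1}+G_k(z)B_k+G_{k+1}(z)C_{k+1}.\]
Left-multiplying the first by $G_k(z)$, right-multiplying the second by $V_k(x)$, and subtracting yields the identity $(x-z)G_k(z)V_k(x)=T_k-T_{k-1}$, where $T_k:=G_k(z)A_kV_{k+1}(x)-G_{k+1}(z)C_{k+1}V_k(x)$. Summing from $k=0$ to $m$ telescopes (using $V_{-1}=G_{-1}=0$) to $(c)$. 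Setting $z=x$ in $(c)$ annihilates the left-hand side and produces the common first equation $G_m(x)A_mV_{m+1}(x)=G_{m+1}(x)C_{m+1}V_m(x)$ of $(d)$ and $(e)$. Differentiating $(c)$ in $x$ and then taking $z=x$ gives~\eqref{cdm11}; differentiating in $z$ and then taking $z=x$ gives~\eqref{cdm12}.

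The main obstacle is the closing implication $(d)\Rightarrow(c)$ (and symmetrically $(e)\Rightarrow(c)$). My plan here is as follows. The first equation of $(d)$ asserts that the bivariate matrix polynomial $P(x,z):=G_m(z)A_mV_{m+1}(x)-G_{m+1}(z)C_{m+1}V_m(x)$ vanishes on the diagonal $x=z$, so $x-z$ divides $P$ and one may write $P(x,z)=(x-z)Q(x,z)$ with $\deg_xQ,\,\deg_zQ\le m$. The second equation of $(d)$ then identifies the diagonal value $Q(x,x)=\sum_{k=0}^m G_k(x)V_k(x)$. The delicate step is to upgrade this diagonal information to the full off-diagonal identity $Q(x,z)=\sum_{k=0}^m G_k(z)V_k(x)$; I expect this to be handled either by matching bigraded coefficients using the degree bounds on $Q$ together with the non-singularity of the leading coefficients of $A_m$ and $C_{m+1}$, or alternatively by extracting the three-term recurrences for $\{V_k\}$ and $\{G_k\}$ directly from $(d)$ and then reapplying the telescoping argument of $(b)\Rightarrow(c)$.
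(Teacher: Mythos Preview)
Your scheme matches the paper's almost exactly: $(a)\Leftrightarrow(b)$ via Theorem~\ref{osvm}, $(b)\Rightarrow(c)$ by telescoping the two recurrences, and $(c)\Rightarrow(d),(e)$ by specialising $z=x$ and differentiating. The only divergence is in the closing step, and there your first proposed route has a real gap.

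Knowing that the bivariate polynomial $Q(x,z):=P(x,z)/(x-z)$ agrees with $\sum_{k=0}^m G_k(z)V_k(x)$ on the diagonal $z=x$ is not enough to conclude equality off the diagonal. Even with the degree bounds $\deg_xQ,\deg_zQ\le m$, the restriction map $Q\mapsto Q(x,x)$ has a large kernel (dimension $(m+1)^2$ versus $2m+1$ in the scalar case), and the non-singularity of the leading coefficients of $A_m$, $C_{m+1}$ does not cut this down. So ``matching bigraded coefficients from diagonal data'' cannot work without additional structural input.

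Your second route is the correct one, and it is what the paper does, although the paper organises it as $(d)\Rightarrow(a)$ rather than $(d)\Rightarrow(c)$. Concretely: subtract the case $m-1$ of~\eqref{cdm11} from the case $m$ to isolate $G_m(x)V_m(x)$, then use~\eqref{conscdm1} at indices $m$ and $m-1$ to rewrite $G_{m+1}(x)C_{m+1}$ and $G_{m-1}(x)A_{m-1}$ in terms of $G_m(x)$; after cancelling $G_m(x)$ on the left (valid for generic $x$), one obtains
\[
\bigl[(A_mV_{m+1}(x)+C_mV_{m-1}(x))V_m^{-1}(x)\bigr]'=I_{N\times N},
\]
which integrates to the three-term recurrence $xV_m=A_mV_{m+1}+B_mV_m+C_mV_{m-1}$ for some constant matrix $B_m$. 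Theorem~\ref{FAVARD1} then gives $(a)$, and the telescoping argument you already have closes the cycle to $(c)$. The implication $(e)\Rightarrow(b)$ is handled symmetrically, recovering the recurrence for $\{G_m\}$ and invoking Theorem~\ref{osvm}.
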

\begin{proof}
To prove this theorem we will proceed according to the following scheme
$(a)\Leftrightarrow(b)$, $(b)\Rightarrow(c)\Rightarrow(e)$, $(e)\Rightarrow(b)$ and $(c) \Rightarrow (d) \Rightarrow (a)$.

$\phantom{ola}$The equivalence $(a)\Leftrightarrow(b)$ is proved in Theorem~\ref{osvm}. To prove that (b) implies (c) we remember that the sequences of matrix polynomials  $\{V_m\}_{m \in {\mathbb{N}}}$ and $\{G_m\}_{m \in {\mathbb{N}}}$ verify,
respectively, the recurrence relations
\begin{eqnarray}
\label{r1} xV_m(x)&=&A_mV_{m+1}(x)+B_m V_m(x)+ C_m V_{m-1}(x)\\ 
\label{r2} z G_m(z)&=&G_{m-1}(z)A_{m-1}+G_m(z)B_m +G_{m+1}(z)
C_{m+1}
\end{eqnarray}
Multiplying on the left by $G_m(z)$ in both sides of~\eqref{r1}  and
on the right by $V_m(x)$ in both sides of ~\eqref{r2} and subtracting the resulting expressions, we get
\begin{multline*}
(x-z)G_m(z)V_m(x)=\left[G_m(z)A_mV_{m+1}(x)-G_{m-1}(z)A_{m-1}V_m(x)\right] \\ - \left[
G_{m+1}(z)C_{m+1}V_m(x)-G_m(z)C_mV_{m-1}(x)\right]
\end{multline*}
and so we have~\eqref{cdm1}.
To prove that (c) implies (d), we just have to take
$z=x$ in~\eqref{cdm1} and then we obtain~\eqref{conscdm1}. ~\eqref{cdm11} follows from~\eqref{cdm1} by differentiation with respect to $x$ and letting  $z=x$.

$\phantom{ola}$To prove that (c) implies (e), we must take $z=x$ in~\eqref{cdm1} and then~\eqref{conscdm2} holds.~\eqref{cdm12} follows in a similar way by differentiating~\eqref{cdm1} with respect to $z$ and taking
$z=x$.

$\phantom{ola}$To complete the proof we need to show that (d) implies
(a). We can rewrite the equation~\eqref{cdm11} as
$$ G_m(x)A_m V_{m+1}^\prime(x)-G_{m+1}(x)C_{m+1} V_m^\prime(x)\\
= G_m(x)V_m(x) + \sum_{k=0}^{m-1} G_k(x)V_k(x),$$ or,
equivalently,
\begin{multline*}
 G_m (x) V_m(x)= G_m(x)[ A_m V_{m+1}^\prime(x) +C_{m}
V_{m-1}^\prime(x) ] \\- [G_{m+1}(x)C_{m+1}+ G_{m-1}(x)A_{m-1}]
V_{m}^\prime(x).
\end{multline*}
Using~\eqref{conscdm1} we get
$\displaystyle [(A_mV_{m+1}(x)+ C_m V_{m-1}(x))V_m^{-1}(x)]^\prime= I_{N \times N} \, $.
Then, we have  $$[A_m V_{m+1}(x)+ C_m V_{m-1}(x)]V_m^{-1}(x)= x
I- B_n,$$
i.e, $\{V_m\}_{m \in {\mathbb{N}}}$ satisfies a three-term recurrence relation.
Now, multiplying both sides in the three-term recurrence relation by ${\mathcal{P}}_0$, from the definition of ${\mathcal{B}}_m$ and by Theorem~\ref{FAVARD1}, the result follows.

$\phantom{ola}$Finally, to prove that $(e)\Rightarrow (b)$ we proceed in a similar way as in the proof of $(d) \Rightarrow (a)$ starting from~\eqref{cdm12} and taking into account Theorem~\ref{osvm}.
\end{proof}

\section{Markov type theorem}

$\phantom{ola}$The block matrix
\begin{eqnarray*} 
 J=\left[%
\begin{array}{cccc}
  B_0 & A_0 & 0_{N \times N} &  \\
  C_1 & B_1 & A_1 & \ddots \\
  0_{N \times N} & C_2 & B_2 & \ddots \\
    & & \ddots& \ddots \\
\end{array}%
\right],
\end{eqnarray*}
is related to the matrix polynomial
sequences $\{V_m\}_{m \in {\mathbb{N}}}$ and $\{G_m\}_{m \in {\mathbb{N}}}$ trough the recurrence relations
\eqref{rrvm} and~\eqref{rrgm}. This block matrix is said to be the $N$-block
Jacobi matrix associated with the above matrix polynomial sequences.

$\phantom{ola}$For polynomials  satisfying a symmetric recurrence relation, it was proved in~\cite{DurLR} that the zeros of the $m$-th matrix orthogonal polynomial are the eigenvalues of the leading principal submatrix $J_{mN}$ of $J$. This result can
be generalized for sequences of orthogonal polynomials that satisfy
non-symmetric recurrence relations. Thus, for $m \in {\mathbb{N}}$,
the zeros of the matrix polynomials $G_{m}$ and $V_{m}$ are the eigenvalues of the matrix $J_{mN}$ (with the same order of algebraic
multiplicity) where $I_{mN \times mN}$ is the~$mN\times mN$ identity matrix and $J_{mN}$ is the leading principal submatrix of
dimension~$mN\times mN$ for the $N$-block Jacobi matrix.
\begin{lemma} \label{enri} \cite{Dur96}
Let $V(t)$ be a~$N\times N$ matrix polynomial and let
$\textbf{a}$ be a zero of $V(t)$ of multiplicity $p$, i.e, a zero of multiplicity $p$
of the scalar polynomial $\det V(t)$. Let
$$L(\textbf{a},V)=\{v \in {\mathbb{C}}^N: v V(\textbf{a})=0_{1 \times N}\} \ \mbox{and} \ R(\textbf{a},A)=\{v \in {\mathbb{C}}^N: V(\textbf{a})v^*=0_{N \times 1}\}.$$
If $\operatorname{dim} \, L(\textbf{a},V) = \operatorname{dim} \, R(\textbf{a},V)=p$, then
$$\left(\adj \,(V(t))\right)^{(l)}(\textbf{a})=0_{N \times N} \mbox{, for }
l=0,\ldots,p-2  \mbox{ and, }  \left(\adj\,
(V(t))\right)^{(p-1)}(\textbf{a})\neq 0_{N \times N}.$$
Moreover,
$\operatorname{rank} \left(\adj\, (V(t))\right)^{(p-1)}(\textbf{a})=p$ and $\left(\adj\, (V(t))\right)^{(p-1)}(\textbf{a})$ defines
a linear mapping from ${\mathbb{C}}^N$ onto $L(\textbf{a},V)$ which is
an isomorphism from $R(\textbf{a},V)$ into $L(\textbf{a}, V)$.
\end{lemma}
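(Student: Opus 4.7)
The plan is to exploit the classical identity $V(t)\,\adj V(t) = \det V(t)\,I_N = \adj V(t)\,V(t)$ in tandem with the Smith canonical form of $V$ localized at $\mathbf{a}$. First, since $\det V(t) = (t-\mathbf{a})^p r(t)$ with $r(\mathbf{a}) \neq 0$, I would reduce to the case where $V(t)$ is in local Smith form near $\mathbf{a}$: write $V(t) = P(t)\,D(t)\,Q(t)$ with $P(\mathbf{a})$ and $Q(\mathbf{a})$ non-singular and $D(t) = \operatorname{diag}(e_1(t),\ldots,e_N(t))$, where each $e_i$ is either a unit at $\mathbf{a}$ or a unit times a power of $(t-\mathbf{a})$. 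The hypothesis on the null-space dimensions pins down the local structure: the nullity of $V(\mathbf{a})$ equals the number of indices $i$ with $e_i(\mathbf{a})=0$ and must equal $p$; since the total vanishing order of $\det V(t)$ at $\mathbf{a}$ is also $p$, each such factor must vanish to exactly first order. This is the semisimplicity of $\mathbf{a}$ that the hypotheses really encode.

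Second, I would compute $\adj V(t) = \adj Q(t)\,\adj D(t)\,\adj P(t)$, with $\adj D(t)$ diagonal and $i$-th entry $\prod_{j\neq i}e_j(t)$. Precisely $p$ of these diagonal entries vanish to order exactly $p-1$ at $\mathbf{a}$ (those indexed by the first-order vanishing $e_i$), while the remaining $N-p$ vanish to order $p$. Consequently $(\adj V)^{(l)}(\mathbf{a}) = 0$ for $l=0,\ldots,p-2$, and $(\adj V)^{(p-1)}(\mathbf{a})$ is non-zero of rank exactly $p$, with explicit form $(p-1)!\,\adj Q(\mathbf{a})\,\Pi\,\adj P(\mathbf{a})$, where $\Pi$ is a diagonal matrix that is non-zero precisely on the $p$ coordinates corresponding to the first-order vanishing $e_i$'s.

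For the image and isomorphism claims, I would identify these coordinate subspaces with $R(\mathbf{a},V)$ and $L(\mathbf{a},V)$. The right null space of $V(\mathbf{a})$ is transported by $Q(\mathbf{a})$ to the coordinate $p$-plane selected by $\Pi$, and symmetrically the left null space is transported by $P(\mathbf{a})$ to the same coordinate $p$-plane on the opposite side. Since $\adj Q(\mathbf{a})$ and $\adj P(\mathbf{a})$ are non-singular, they transport the trivial identification between these two coordinate $p$-planes into an isomorphism between $R(\mathbf{a},V)$ and $L(\mathbf{a},V)$; simultaneously, the full image of $\mathbb{C}^N$ under $(\adj V)^{(p-1)}(\mathbf{a})$ equals $L(\mathbf{a},V)$, yielding all three conclusions of the lemma at once. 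As a cross-check, differentiating $V(t)\,\adj V(t) = (t-\mathbf{a})^p r(t)\,I_N$ exactly $p-1$ times at $\mathbf{a}$ and using the vanishing of lower-order derivatives of $\adj V$ collapses the Leibniz sum to $V(\mathbf{a})\,(\adj V)^{(p-1)}(\mathbf{a})=0$, so the columns of the non-zero derivative do lie in the right null space of $V(\mathbf{a})$, consistent with the image statement.

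The main obstacle I anticipate is the algebraic bookkeeping: matching the row/column conventions in the definitions of $L(\mathbf{a},V)$ and $R(\mathbf{a},V)$ with the correct side of the Smith factorization, and verifying that the unimodular-at-$\mathbf{a}$ changes of basis by $P$ and $Q$ intertwine the abstract null spaces with the coordinate ones in such a way that the derivative of the adjugate realizes the claimed isomorphism. A subsidiary point is that a generic Smith factorization allows elementary divisors of higher order, but the combination of the hypothesis $\dim L = \dim R = p$ with $p = \operatorname{mult}_{\mathbf{a}}\det V$ is precisely what rules this out and enforces the first-order vanishing used throughout the argument.
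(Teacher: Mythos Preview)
The paper does not prove this lemma; it is stated with a citation to Dur\'an~\cite{Dur96} and then used as a black box in the quadrature formula and the Markov-type theorem. There is therefore no in-paper argument to compare against.

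Your Smith-form approach is correct and is essentially the standard proof. The key reduction---that the hypothesis $\dim L(\mathbf{a},V)=\dim R(\mathbf{a},V)=p$ together with $\operatorname{mult}_{\mathbf{a}}\det V=p$ forces exactly $p$ invariant factors to vanish at $\mathbf{a}$, each to first order---is exactly right, and once that is in hand the derivative count for $\adj D(t)$ and the Leibniz collapse for $\adj V(t)=\adj Q(t)\,\adj D(t)\,\adj P(t)$ go through as you describe, yielding the vanishing for $l\le p-2$ and the rank-$p$ statement at order $p-1$.

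One small point on your cross-check: differentiating $V(t)\,\adj V(t)=\det V(t)\,I$ at order $p-1$ shows that the \emph{columns} of $(\adj V)^{(p-1)}(\mathbf{a})$ lie in the right null space of $V(\mathbf{a})$, but the lemma asserts that the image of the associated linear map is $L(\mathbf{a},V)$, the \emph{left} null space. The companion identity $\adj V(t)\,V(t)=\det V(t)\,I$ gives $(\adj V)^{(p-1)}(\mathbf{a})\,V(\mathbf{a})=0$, placing the \emph{rows} in $L(\mathbf{a},V)$; this is what you need if the map is read as $v\mapsto v\,(\adj V)^{(p-1)}(\mathbf{a})$ on row vectors. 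That is precisely the row/column bookkeeping you flag as the main obstacle, and it resolves cleanly once the convention is fixed.
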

\begin{lemma}\label{frpropalternativa}
Let $x_{m,k}$, $k=1,\ldots, s$ with $s\leq mN$ be the zeros of the
matrix polynomial $V_m$.
For any matrix polynomial $V(t)$ of degree $\leq n-1$ we have the partial fraction decomposition
$t \in {\mathbb{C}}\setminus \{x_{m,1},\ldots,x_{m,s}\},$
$$V(t)(V_m(t))^{-1}=\sum_{k=1}^s \frac{C_{m,k}}{x-x_{m,k}}$$
where $\displaystyle C_{m,k}=\frac{l_k}{(\det\,
(V_m(t)))^{(l_k)}(x_{m,k})} V(x_{m,k})(\adj\,(V_m(t)))^{(l_k
-1)}(x_{m,k})$ and $l_k$ is the multiplicity of $x_{m,k}$, ($l_k \leq N$).
\end{lemma}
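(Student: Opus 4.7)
The plan is to reduce the statement to a standard residue computation and then read off the coefficients using Lemma~\ref{enri}.

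First, I would write $V_m(t)^{-1}=\adj(V_m(t))/\det(V_m(t))$, so that
$$V(t)\,V_m(t)^{-1} \;=\; \frac{V(t)\,\adj(V_m(t))}{\det(V_m(t))}\,.$$
Under the degree hypothesis on $V$ (which makes the left-hand side vanish at infinity, since $V_m(t)^{-1}$ decays like $t^{-m}$), this rational matrix function is strictly proper, and therefore admits a partial fraction expansion whose poles lie among the roots of $\det V_m$, namely $x_{m,1},\dots,x_{m,s}$.

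Next, I would invoke Lemma~\ref{enri} at each zero $x_{m,k}$: the structural assumption that $\dim L(x_{m,k},V_m)=\dim R(x_{m,k},V_m)=l_k$ implies
$$\bigl(\adj V_m\bigr)^{(l)}(x_{m,k}) = 0_{N\times N},\qquad l=0,\ldots,l_k-2,$$
while $\bigl(\adj V_m\bigr)^{(l_k-1)}(x_{m,k})\neq 0_{N\times N}$. Hence the numerator $V(t)\adj V_m(t)$ vanishes to order at least $l_k-1$ at $x_{m,k}$, whereas the denominator $\det V_m(t)$ vanishes to order exactly $l_k$. As a result, $V(t)V_m(t)^{-1}$ has only a simple pole at each $x_{m,k}$, and strict properness forces
$$V(t)V_m(t)^{-1} \;=\; \sum_{k=1}^s \frac{C_{m,k}}{t-x_{m,k}}$$
for suitable matrices $C_{m,k}$.

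Finally, I would compute each $C_{m,k}$ as the residue at $x_{m,k}$. Expanding numerator and denominator in Taylor series around $x_{m,k}$,
\begin{align*}
V(t)\,\adj V_m(t) &= V(x_{m,k})\,\frac{\bigl(\adj V_m\bigr)^{(l_k-1)}(x_{m,k})}{(l_k-1)!}\,(t-x_{m,k})^{l_k-1} + O\!\bigl((t-x_{m,k})^{l_k}\bigr), \\
\det V_m(t) &= \frac{(\det V_m)^{(l_k)}(x_{m,k})}{l_k!}\,(t-x_{m,k})^{l_k} + O\!\bigl((t-x_{m,k})^{l_k+1}\bigr),
\end{align*}
so multiplying by $(t-x_{m,k})$ and letting $t\to x_{m,k}$ yields
$$C_{m,k} \;=\; \frac{l_k}{(\det V_m)^{(l_k)}(x_{m,k})}\,V(x_{m,k})\,\bigl(\adj V_m\bigr)^{(l_k-1)}(x_{m,k}),$$
matching the claimed formula.

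The only delicate point in this plan is to verify that Lemma~\ref{enri} is genuinely applicable at each root $x_{m,k}$, i.e., that the left and right kernels of $V_m(x_{m,k})$ both have dimension exactly $l_k$. This is the structural property of the vector/matrix orthogonal polynomials $V_m$ that underlies the entire argument and that also explains the bound $l_k\leq N$; everything else reduces to one careful residue computation.
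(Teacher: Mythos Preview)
The paper does not actually prove Lemma~\ref{frpropalternativa}; it is stated without proof, immediately after Lemma~\ref{enri} (which is quoted from \cite{Dur96}), and is then used as a tool in the quadrature formula and the Markov-type theorem. So there is no ``paper's own proof'' to compare against.

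Your argument is the natural one and is essentially correct: write $V_m(t)^{-1}=\adj V_m(t)/\det V_m(t)$, use strict properness to get a partial fraction expansion over the zeros of $\det V_m$, invoke Lemma~\ref{enri} to force all poles to be simple, and read off each $C_{m,k}$ as a residue via the Taylor expansions of numerator and denominator. You also put your finger on the one genuine gap, which is present in the paper as well: the hypothesis of Lemma~\ref{enri}, namely that $\dim L(x_{m,k},V_m)=\dim R(x_{m,k},V_m)=l_k$, is not part of the statement of Lemma~\ref{frpropalternativa} and is never verified for the orthogonal matrix polynomials $V_m$ in the text. The paper simply takes this semisimplicity of the zeros for granted (it is established for matrix orthogonal polynomials with respect to positive definite weights in \cite{Dur96,DurLR}, which is presumably the intended justification), so your proposal is at least as complete as what the paper offers.
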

$\phantom{ola}$With these results we are able to establish a quadrature formula for
the matrix orthogonal sequence $\{V_m\}_{m \in {\mathbb{N}}}$.
\begin{teo}[Quadrature Formula]\label{formulaquadratura} Let $\{V_m\}_{m \in {\mathbb{N}}}$ be the sequence of matrix polynomials that is left-orthogonal with
respect to the matrix of measures $W$. Also let $\{\mathcal{B}_m\}_{m \in {\mathbb{N}}}$ be
the sequence of vector polynomials defined by~\eqref{vv} and let
$\{{\mathcal{B}}^{(1)}_{m}\}_{m \in {\mathbb{N}}}$, be the sequence of associated  polynomials of the first kind  for $\displaystyle \{
\mathcal{B}_{m} \}_{m \in {\mathbb{N}}} $ and  $\displaystyle \mathcal{U}$. Let $x_{m,k},$ $(k=1,\ldots,s)$ be
the zeros of the matrix polynomial $V_m$
(hence $s\leq mN$), and let $\Gamma_{m,k}$ be the matrices
\begin{eqnarray*}
\Gamma_{m,k}= \frac{l_k}{(\det\, (V_m(x)))^{(l_k)}(x_{m,k})}
(\adj\,(V_m(x)))^{(l_k -1)}(x_{m,k})
{\mathcal{B}}^{(1)}_{m-1}(x_{m,k}) \, ,
\end{eqnarray*}
for $k=1,\ldots,s$
where $l_k$ is the multiplicity $x_{m,k}$.

$\phantom{ola}$Then, for any polynomial $V$ of degree less than  or equal to $2m-1$
the following quadrature formula holds
\begin{eqnarray*}
 \int
V(h(x))dW(h(x)) = \sum_{k=1}^s V(x_{m,k})\Gamma_{m,k} \, .
\end{eqnarray*}
\end{teo}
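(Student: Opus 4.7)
The plan is to reduce the problem to integrating a remainder of matrix degree less than $m$, and then to read off the quadrature weights from a residue calculation driven by the generalized Markov function. To begin, since the leading coefficient of $V_m$ is non-singular (it equals the product $A_{m-1}^{-1}\cdots A_0^{-1}$ coming from Theorem~\ref{favardv}), one can perform Euclidean division on the right and write $V(t)=Q(t)V_m(t)+R(t)$ with $\deg Q\le m-1$ and $\deg R\le m-1$.

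The next observation is that both sides of the quadrature identity annihilate the $QV_m$ piece, so it suffices to prove the formula for $R$. On the integral side, writing $Q(t)=\sum_{j=0}^{m-1}Q_j t^j$ and using that $t^j$ is scalar gives $\int Q(t)V_m(t)\,dW(t)=\sum_j Q_j\int V_m(t)\,dW(t)\,t^j=0$ by the left-orthogonality relations~\eqref{ortmatricialesq}. On the quadrature side, I need $V_m(x_{m,k})\Gamma_{m,k}=0$. Applying the Leibniz rule to the identity $V_m(t)\adj V_m(t)=\det V_m(t)\,I$, differentiating $l_k-1$ times and evaluating at $t=x_{m,k}$, Lemma~\ref{enri} forces $(\adj V_m)^{(l)}(x_{m,k})=0$ for $l=0,\dots,l_k-2$, so every term in the Leibniz expansion except the $j=0$ term vanishes, while the right-hand side $(\det V_m)^{(l_k-1)}(x_{m,k})\,I$ is also zero because $x_{m,k}$ has multiplicity $l_k$ in $\det V_m$. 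Consequently $V_m(x_{m,k})(\adj V_m)^{(l_k-1)}(x_{m,k})=0$, which yields $V_m(x_{m,k})\Gamma_{m,k}=0$ and hence $V(x_{m,k})\Gamma_{m,k}=R(x_{m,k})\Gamma_{m,k}$.

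For the remainder $R$, I would use that $\mathcal{F}(z)=\sum_{k\ge 0}S_k z^{-k-1}$ for $|z|$ large (where $S_k=\int t^k\,dW(t)$), so that $\int R(t)\,dW(t)=\frac{1}{2\pi i}\oint_C R(z)\mathcal{F}(z)\,dz$ for a contour $C$ surrounding $\mathrm{supp}\,W$ and all the zeros of $\det V_m$. Theorem~\ref{teohpesq}, applied with $m$ in place of $m+1$, yields $V_m(z)\mathcal{F}(z)=\mathcal{B}_{m-1}^{(1)}(z)+O(z^{-m-1})$; rearranging gives that $R(z)[\mathcal{F}(z)-V_m(z)^{-1}\mathcal{B}_{m-1}^{(1)}(z)]=O(z^{-m-2})$, a function analytic outside $C$ whose integral vanishes upon deforming $C$ to infinity. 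Hence $\int R(t)\,dW(t)=\frac{1}{2\pi i}\oint_C R(z)V_m(z)^{-1}\mathcal{B}_{m-1}^{(1)}(z)\,dz$.

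Finally, Lemma~\ref{frpropalternativa} supplies the partial fraction decomposition $R(z)V_m(z)^{-1}=\sum_k C_{m,k}^R(z-x_{m,k})^{-1}$ with only simple poles (even at multiply repeated zeros, thanks again to Lemma~\ref{enri}) and with residues $C_{m,k}^R=\frac{l_k}{(\det V_m)^{(l_k)}(x_{m,k})}R(x_{m,k})(\adj V_m)^{(l_k-1)}(x_{m,k})$. The residue theorem then produces $\sum_k C_{m,k}^R\mathcal{B}_{m-1}^{(1)}(x_{m,k})=\sum_k R(x_{m,k})\Gamma_{m,k}$, which combined with the cancellation of the $QV_m$ term closes the argument. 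The main technical obstacle I foresee is the bookkeeping at multiple zeros: one must track the derivatives of $\adj V_m$ and $\det V_m$ through the Leibniz expansion and through the residue computation in parallel, in order to confirm that the only surviving contribution is precisely $(\adj V_m)^{(l_k-1)}(x_{m,k})\mathcal{B}_{m-1}^{(1)}(x_{m,k})$, as in the definition of $\Gamma_{m,k}$.
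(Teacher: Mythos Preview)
Your argument is correct and shares the same skeleton as the paper's proof: right Euclidean division $V=QV_m+R$, elimination of the $QV_m$ part on both sides via left-orthogonality and the identity $V_m(x_{m,k})(\adj V_m)^{(l_k-1)}(x_{m,k})=0$ (which you derive from Leibniz and Lemma~\ref{enri} exactly as the paper does), and evaluation of the remainder through Lemma~\ref{frpropalternativa}.

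The one genuine difference is in how the remainder integral is computed. The paper stays entirely on the real side: after writing $R(x)V_m(x)^{-1}=\sum_k C_{m,k}(x-x_{m,k})^{-1}$, it multiplies back by $V_m(x)$, uses $C_{m,k}V_m(x_{m,k})=0$ to rewrite this as $R(x)=\sum_k C_{m,k}\dfrac{V_m(x)-V_m(x_{m,k})}{x-x_{m,k}}$, substitutes $x=h(t)$, and integrates against $dW(h(t))$ using directly the \emph{integral representation} $\mathcal{B}_{m-1}^{(1)}(z)=\int\frac{V_m(z)-V_m(h(t))}{z-h(t)}\,dW(h(t))$. No contour, no $\mathcal{F}$, no Theorem~\ref{teohpesq}. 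Your route instead passes through the complex plane: you realize $\int R\,dW$ as $\frac{1}{2\pi i}\oint R\,\mathcal{F}$, invoke the Hermite--Pad\'e estimate of Theorem~\ref{teohpesq} to trade $\mathcal{F}$ for $V_m^{-1}\mathcal{B}_{m-1}^{(1)}$ modulo something that vanishes at infinity, and then pick up residues. Both are sound; the paper's version is more elementary (no analyticity or deformation needed), while yours makes the role of the Markov function and of Theorem~\ref{teohpesq} explicit and would generalize more readily to settings where one only knows $\mathcal{F}$ asymptotically.
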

\begin{proof}
Let $V$ be a matrix polynomial of  degree less than  or equal to $2m-1$.
Since $V_m$ is a polynomial with non-singular leading coefficient, then (cf.~\cite{gant})
$$V(x)=C(x)V_m(x)+R(x),$$ where $C$ and $R$
are matrix polynomials with degree of $R$ less than  or equal to $m-1$.
Thus
$\displaystyle V(x)V^{-1}_m(x)=C(x)+R(x)V^{-1}_{m}(x) $
assuming that  $x$ is not a zero of $V_m$. Since degree $R(x)\leq m-1$, using Lemma~\ref{frpropalternativa}
we get $$R(x)V_m^{-1}(x)=\sum_{k=1}^s \frac{C_{m,k}}{x-x_{m,k}},$$
where the matrices $C_{m,k}$ are given by
$$C_{m,k}=\frac{l_k}{(\det\,(V_m(x)))^{(l_k)}(x_{m,k})}R(x_{m,k})(\adj\,(V_m(x)))^{(l_k-1)}(x_{m,k}).$$
According to Lemma~\ref{enri}, $V_m(x_{m,k})\left(\adj\,
(V_m(x))\right)^{(l_k-1)}(x_{m,k})=0_{N \times N}$ and taking into account that $R(x_{m,k})= V(x_{m,k})-C(x_{m,k})V_m(x_{m,k})$, the previous expression becomes
$$C_{m,k}=\frac{l_k}{(\det\,(V_m(x)))^{(l_k)}(x_{m,k})}V(x_{m,k})(\adj\,(V_m(x)))^{(l_k-1)}(x_{m,k}).$$
Then, $$V(x)=C(x)V_m(x) + \sum_{k=1}^s C_{m,k}
\frac{V_m(x)}{x-x_{m,k}}.$$ Since
$$V_m(x_{m,k})\left(\adj
(V_m(x))\right)^{(l_k-1)}(x_{m,k})=\left(\adj
(V_m(x))\right)^{(l_k-1)}(x_{m,k})V_m(x_{m,k})=0_{N \times N}$$
we have
 $$V(x)=C(x)V_m(x) + \sum_{k=1}^s C_{m,k}
\frac{V_m(x)-V_m(x_{m,k})}{x-x_{m,k}}.$$ Taking $x=h(t)$, we have
$$V(h(t))=C(h(t))V_m(h(t)) + \sum_{k=1}^s C_{m,k}
\frac{V_m(x_{m,k})-V_m(h(t))}{x_{m,k}-h(t)}.$$
Then, from the integral representation of the associated polynomials of the first kind
$${\mathcal{B}}_{m-1}^{(1)}(z)=\int \frac{V_m(z)-V_m(h(x))}{z-h(x)}dW(h(x)),$$ it follows that
$$\int V(h(t))dW(h(t))=\int C(h(t))V_m(h(t))dW(h(t)) + \sum_{k=1}^s C_{m,k}
{\mathcal{B}}_{m-1}^{(1)}(x_{m,k}).$$
So, from the orthogonality of $\{V_m\}_{m \in {\mathbb{N}}}$ with respect to $W$ we have
$$\int V(h(t))dW(h(t))= \sum_{k=1}^s C_{m,k}
{\mathcal{B}}_{m-1}^{(1)}(x_{m,k}),$$ and the statement follows.
\end{proof}
$\phantom{ola}$The next result is an extension of one proved by A. J. Dur\'{a}n in~\cite{Dur96}.
It deals with the ratio asymptoticss of the  $m$-th orthogonal polynomial $V_m$
with respect to the generalized Markov matrix function, ${{\mathcal{F}}}$, and the $(m-1)$-th
associated polynomial of the first kind ${\mathcal{B}}^{(1)}_{m-1}$.
\begin{teo}[Generalized Markov's theorem]
Let $\mathcal{U}$ be a quasi-definite vector of linear functionals, $\{V_m\}_{m \in {\mathbb{N}}}$ be the sequence of matrix polynomials left-orthogonal with respect to the generalized Markov matrix function, ${{\mathcal{F}}}$, defined by~\eqref{fmarkov}, and let $\{{\mathcal{B}}^{(1)}_m\}_{m \in {\mathbb{N}}}$ be the sequence of associated polynomials of the first kind for
$\displaystyle \{\mathcal{B}_{m}\}_{m \in {\mathbb{N}}}$ and $\displaystyle \mathcal{U}$.
Then,
$$\lim_{m\rightarrow \infty} V^{-1}_m(z) {\mathcal{B}}^{(1)}_{m-1} (z) = {\mathcal{F}}(z)$$
locally uniformly in ${\mathbb{C}}\setminus \Gamma$, where
$\displaystyle \Gamma=\cap_{N\geq 0} M_N,\quad M_N=\overline{\cup_{n\geq N}\{\mbox{zeros of }\,
V_m \}} \, .$
\end{teo}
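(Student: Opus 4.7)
The strategy is to combine the Hermite--Padé matching condition from Theorem~\ref{teohpesq} with the partial-fraction / quadrature representation provided by Lemma~\ref{frpropalternativa} and Theorem~\ref{formulaquadratura}, then pass to the limit by a normal-family argument in the complement of the accumulation set $\Gamma$ of the zeros of $\{V_m\}$.

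\textbf{Step 1 (asymptotics at infinity).} Apply Theorem~\ref{teohpesq} with the index shifted by $1$. For every $m\ge 1$,
\begin{equation*}
V_m(z)\mathcal{F}(z)-\mathcal{B}^{(1)}_{m-1}(z)=\Delta_m z^{-(m+1)}+O(z^{-(m+2)}),\qquad z\to\infty.
\end{equation*}
Hence, at any point where $V_m$ is invertible,
\begin{equation*}
\mathcal{F}(z)-V_m^{-1}(z)\mathcal{B}^{(1)}_{m-1}(z)=V_m^{-1}(z)\Bigl[\Delta_m z^{-(m+1)}+O(z^{-(m+2)})\Bigr].
\end{equation*}

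\textbf{Step 2 (partial-fraction / quadrature representation).} Since $\deg\mathcal{B}^{(1)}_{m-1}\le m-1$ while $\deg V_m=m$, the left-sided version of Lemma~\ref{frpropalternativa} yields
\begin{equation*}
V_m^{-1}(z)\mathcal{B}^{(1)}_{m-1}(z)=\sum_{k=1}^s\frac{\Gamma_{m,k}}{z-x_{m,k}},
\end{equation*}
where the $\Gamma_{m,k}$ coincide with the quadrature weights of Theorem~\ref{formulaquadratura}. Expanding in powers of $1/z$ for $|z|$ larger than every $|x_{m,k}|$ and invoking the quadrature formula (exact for matrix polynomials of degree $\le 2m-1$), the Laurent coefficients of $V_m^{-1}\mathcal{B}^{(1)}_{m-1}$ at $\infty$ agree with those of $\mathcal{F}$ through order $z^{-2m}$. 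In other words, $V_m^{-1}\mathcal{B}^{(1)}_{m-1}$ is a genuine Padé-type approximant to $\mathcal{F}$, and the error has a zero of order at least $2m+1$ at infinity.

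\textbf{Step 3 (normal family and conclusion).} Fix a compact $K\subset\mathbb{C}\setminus\Gamma$. By the very definition of $\Gamma=\bigcap_N M_N$, there exist $N_0$ and $\delta>0$ such that for every $m\ge N_0$ every zero $x_{m,k}$ lies at distance at least $\delta$ from $K$. Together with the uniform boundedness of the weights $\Gamma_{m,k}$ (see the main obstacle below), the partial-fraction expansion of Step~2 gives a uniform bound for $V_m^{-1}\mathcal{B}^{(1)}_{m-1}$ on $K$. Hence $\{V_m^{-1}\mathcal{B}^{(1)}_{m-1}\}_{m\ge N_0}$ is a normal family on $\mathbb{C}\setminus\Gamma$. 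Any locally uniform cluster point is a holomorphic matrix function on $\mathbb{C}\setminus\Gamma$ whose Laurent expansion at $\infty$ agrees with that of $\mathcal{F}$ up to arbitrarily high order, by Step~2 letting $m\to\infty$; it must therefore equal $\mathcal{F}$, and uniqueness of the limit promotes subsequential convergence to convergence of the whole sequence.

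\textbf{Main obstacle.} The delicate point is the uniform boundedness of the quadrature weights $\{\Gamma_{m,k}\}$ needed in Step~3. In the scalar case positivity of the weights makes this immediate, but here the $\Gamma_{m,k}$ are only $N\times N$ matrices with no a priori positivity. One would deduce the required bound by combining the moment identity $\sum_k x_{m,k}^{j}\Gamma_{m,k}=S_j$ (valid for $j\le 2m-1$ with $S_j$ the fixed moments of $W$) with the separation estimate on the support of the discrete masses, effectively controlling $\|\Gamma_{m,k}\|$ through a matrix analogue of a Vandermonde / Christoffel-function argument. Once this uniform bound is in hand the rest of the proof is bookkeeping on the two expansions at infinity.
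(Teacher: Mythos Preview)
Your route is genuinely different from the paper's. The paper writes $V_m^{-1}(z)\mathcal{B}^{(1)}_{m-1}(z)=\int (z-h(x))^{-1}\,d\mu_m(h(x))$ for the discrete matrix measure $\mu_m=\sum_k\Gamma_{m,k}\delta_{y_{m,k}}$, and then argues in two steps: (i) pointwise convergence on $\mathbb{C}\setminus\Gamma$ is obtained by contradiction, extracting via Banach--Alaoglu a weak-$*$ convergent subsequence of $\{\mu_m\}$ whose moments match those of $\mathcal{U}$ and controlling the tail outside a large disc through the fixed zeroth moment $\sum_k\Gamma_{m,k}=S_0$; (ii) the upgrade to locally uniform convergence is by Stieltjes--Vitali, using only the bound $|1/(z-h(x))|\le A$ on $K\times M_N$ together with $\sum_k\Gamma_{m,k}=S_0$. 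Thus the paper never attempts to bound the individual $\Gamma_{m,k}$; only the total mass identity is invoked. (Even so, the estimate $\left\|\int(z-h(x))^{-1}\,d\mu_m\right\|\le A\|S_0\|$ is delicate for non-positive matrix measures, and the paper is somewhat sketchy at this point too.)

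Your self-identified obstacle is a real gap, and larger than you state. What you actually need in Step~3 is a uniform bound on $\sum_{k}\|\Gamma_{m,k}\|$ (the total variation of $\mu_m$), since there can be up to $mN$ terms in the partial-fraction sum; bounding each $\|\Gamma_{m,k}\|$ separately is not enough, and the Vandermonde/Christoffel heuristic you sketch does not obviously yield a total-variation bound without positivity. The paper's weak-$*$ compactness argument sidesteps exactly this by never passing through the triangle inequality on the weights. There is also a second, more structural issue with your identification step: concluding that a cluster point equals $\mathcal{F}$ from agreement of Laurent coefficients at $\infty$ requires that $\infty$ lie in $\overline{\mathbb{C}}\setminus\Gamma$ and that $\mathbb{C}\setminus\Gamma$ be connected (so that analytic continuation from a neighborhood of $\infty$ reaches all of it). Neither is guaranteed here, since the zeros of $V_m$ may be unbounded and $\mathbb{C}\setminus\Gamma$ may have bounded components. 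The paper's pointwise contradiction argument avoids this by comparing with $\mathcal{F}$ directly at each fixed $z\in\mathbb{C}\setminus\Gamma$, with no appeal to the behavior at infinity.
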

\begin{proof}
First, from Lemma~\ref{frpropalternativa} we get
$$V^{-1}_m(z) {\mathcal{B}}^{(1)}_{m-1} (z)=\sum_{k=1}^s \Gamma_{m,k}
\frac{1}{z-x_{m,k}},$$ where $\Gamma_{m,k}$ are the matrix coefficients that appear in the quadrature formula presented in Theorem~\ref{formulaquadratura} and  $x_{m,k}$ are the zeros of  $V_m$.
On the other hand, there always exist complex numbers $y_{m,k}$ such that
$h(y_{m,k})=x_{m,k},$ and
$$V^{-1}_m(z) {\mathcal{B}}^{(1)}_{m-1} (z)=\sum_{k=1}^s \Gamma_{m,k}
\frac{1}{z-h(y_{m,k})}.$$
We consider the sequence of discrete matrices of measures $\{\mu_m\}_{m \in \mathbb{N}}$ defined by
$$\mu_m=\sum_{k=1}^{s} \Gamma_{m,k} \, \delta_{y_{m,k}} \, . $$
Thus,
\begin{eqnarray} \label{amil}
V^{-1}_m(z) {\mathcal{B}}^{(1)}_{m-1} (z)=\sum_{k=1}^s \Gamma_{m,k}
\frac{1}{z-h(y_{m,k})}=\int \frac{d\mu_m(h(x))}{z-h(x)}
\end{eqnarray}
if $z$ is not a zero of $V_m$.
Taking into account~\eqref{amil}, it will be enough to prove that
$$\lim_{m \rightarrow\infty} \int \frac{d\mu_m(h(x))}{z-h(x)}= {\mathcal{F}}(z)
\mbox{ for } \ z \in {\mathbb{C}} \setminus \Gamma \, .$$

$\phantom{ola}$The first step deals with the pointwise convergence. Otherwise, we assume that there exists a complex number $z\in \, {\mathbb{C}}\setminus \Gamma$, an increasing sequence of nonnegative integers $\{m_{l}\}_{l \in {\mathbb{N}}},$ and a positive constant $C$ such that
\begin{eqnarray} \label{eq:nova}
\left\| \int \frac{d\mu_{m_l}(h(x))}{z-h(x)} - {\mathcal{F}}(z)
\right\|_2 \geq C >0, \,\, l\geq0,
\end{eqnarray}
where $\|\, . \,\|_2$ denotes the spectral norm of a matrix, i.e.,
$$\|A\|_2=\mbox{max}\{\sqrt \lambda: \, \lambda \, \mbox{is a eigenvalue of}\, A^* A\}.$$
Taking an increasing sequence $\{a_k\}_{k \in {\mathbb{N}}}$ such that $a_k \rightarrow \infty$, and
using the Banach-Alaoglu's theorem there exists a subsequence $\{r_l\}_{l \in {\mathbb{N}}}$ from $\{m_l\}_{l \in {\mathbb{N}}}$,
defined on a curve $\gamma_k $ contained in a disc $|z| < a_k$, with the
same $k$-th moment of the vector of linear functionals,~$\mathcal{U}$, for $k\leq 2r_l-1$,
such that
\begin{eqnarray}
 \label{amil2}
 \lim_{l\rightarrow \infty} \int_{\gamma_k} f(h(x))d\mu_{r_l}(h(x))=  \frac{1}{2 i \pi}
 \int_{\gamma_k} f(h(z)) \mathcal{U}_x \left( \frac{\mathcal{P}_0(x)}{z - h (x)} \right) \, dz \, .
 \end{eqnarray}
Moreover,
\begin{equation*}
\left\| \int \frac{d\mu_{r_l}(h(x))}{z-h(x)} - {\mathcal{F}}(z) \right\|_2
 \leq
\left\| \int_{\gamma_k} \frac{d\mu_{r_l}(h(x))}{z-h(x)}- {\mathcal{F}}(z) \right\|_2
 \\ + \left\| \int_{\ell_k} \frac{d\mu_{r_l}(h(x))}{z-h(x)} \right\|_2 \, ,
\end{equation*}
with $\ell_k $ in the exterior of the disc $|z| < a_k$.
We write $S_0$ for the first moment of the matrices of measures $\mu_{r_l}$ which is the first moment of $\mathcal{U}$.
Then, by taking $k$ and then $r_l$ large enough, from~\eqref{eq:nova} and~\eqref{amil2} we obtain
\begin{eqnarray*}
\frac{C}{2} & \leq & \max \left(\frac{1}{|z-h(a_k)|}\right) \left\|
\int_{\ell_k} d \mu_{r_l}(h(x)) \right\|_2
 \\
& \leq & \max \left(\frac{1}{|z-h(a_k)|}\right) \| S_0 \|_2 \, .
\end{eqnarray*}
But this yields $C=0$ and, therefore,~\eqref{eq:nova} is not possible.

$\phantom{ola}$The next step is to prove that the analytic functions which are the entries of the matrix
$\displaystyle\int \frac{d\mu_{m}(h(x))}{z-h(x)}$ are uniformly bounded in compact sets of
${\mathbb{C}} \setminus \Gamma$. Then, the uniform convergence in compact subsets of ${\mathbb{C}} \setminus \Gamma$
will follow from Stieltjes-Vitali's theorem.

$\phantom{ola}$Given a compact $K \subset\,{\mathbb{C}} \setminus \Gamma,$ let notice that $K \cap M_N \neq \emptyset$,
for $N$ large enough, and then there exists $A > 0$ such that
$$\left|\frac{1}{z-h(x)}\right|\leq A ,\,\, \mbox{for} \,\, z \in K \,\,\mbox{and}\,\, h(x) \in M_N.$$
Then, for $n \geq N$ 
$$\left\| \int \frac{d\mu_n (h(x))}{z-h(x)} \right\| 
\leq A \, S_0 \, .$$

$\phantom{ola}$The spectral norm $\displaystyle \int \frac{d\mu_m(h(x))}{z-h(x)}$ is uniformly
bounded and, therefore, from the equivalence of the norms in finite dimensional spaces, the result follows.
\end{proof}
\begin{rem}
In an analog way we can deduce the following result.
Let $\{G_m\}_{m \in {\mathbb{N}}}$ be the sequence of matrix polynomials right-orthogonal with respect to the generalized Markov function ${\mathcal{F}}$ and let $\{G^{(1)}_m\}_{m \in {\mathbb{N}}}$ be the sequence of associated polynomials of the first kind for $\{G_m\}_{m \in {\mathbb{N}}}$ and ${\mathcal{U}}$. Then,
$$\lim_{m\rightarrow \infty} G^{(1)}_{m - 1} (z) G^{- 1}_{m}(z)= {\mathcal{F}}(z),$$
for $z \in {\mathbb{C}} \setminus \Gamma$ and the convergence is locally uniformly on  ${\mathbb{C}} \setminus \Gamma$, where
$$\Gamma = \cap_{N\geq 0} M_N , \quad M_N = \overline{\cup_{n\geq N}\{\mbox{zeros of }
G_m \}}.$$
\end{rem}

\ifx\undefined\bysame
\newcommand{\bysame}{\leavevmode\hbox to3em{\hrulefill}\,}
\fi

\section*{Acknowledgements}
The work of the second author (FM) has been  supported by Direcci\'on General de Investigaci\'on, Ministerio de Ciencia e Innovaci\'on of Spain, under grant MTM2009-12740-C03-01.

\end{document}